\newtheorem{thm}{Theorem}[section]
\newtheorem{lemma}[thm]{Lemma}
\newtheorem{remark}[thm]{Remark}
\newcommand{\beq}{\begin{equation}}
\newcommand{\eeq}{\end{equation}}
\newcommand{\ep}{{\epsilon}}
\newcommand{\del}{\delta}
\def\cI{\mathcal{I}}
\def\cI{\mathcal{I}}
\def\R{\mathbb{R}}
\def\Z{\mathbb{Z}}
\def\nd{{\textnormal{d}}}
\newcommand{\tcr}[1]{\textcolor{red}{#1}}
\newcommand{\tcg}[1]{\textcolor{green!50!black}{#1}}
\journal{Editor}
\begin{document}

\begin{frontmatter}

%% Title, authors and addresses

%% use the tnoteref command within \title for footnotes;
%% use the tnotetext command for theassociated footnote;
%% use the fnref command within \author or \address for footnotes;
%% use the fntext command for theassociated footnote;
%% use the corref command within \author for corresponding author footnotes;
%% use the cortext command for theassociated footnote;
%% use the ead command for the email address,
%% and the form \ead[url] for the home page:
%% \title{Title\tnoteref{label1}}
%% \tnotetext[label1]{}
%% \author{Name\corref{cor1}\fnref{label2}}
%% \ead{email address}
%% \ead[url]{home page}
%% \fntext[label2]{}
%% \cortext[cor1]{}
%% \address{Address\fnref{label3}}
%% \fntext[label3]{}

\title{Asymptotically compatible reproducing kernel collocation and meshfree integration for the peridynamic Navier equation}

\author[1]{Yu Leng\corref{cor1}}
\ead{yu-leng@utexas.edu}
\author[2]{Xiaochuan Tian}
\author[3]{Nathaniel A. Trask\fnref{fn1}}
\author[1]{John T. Foster}

\address[1]{Department of Petroleum and Geosystems Engineering, The University of Texas at Austin, Austin, TX 78712, United States}
\address[2]{Department of Mathematics, The University of Texas at Austin, Austin, TX 78712, United States}
\address[3]{Center for Computing Research, Sandia National Laboratories, Albuquerque, NM 87123, United States}

\cortext[cor1]{Corresponding author}
\fntext[fn1]{Sandia National Laboratories is a multimission laboratory managed and operated by National Technology and Engineering Solutions of Sandia, LLC., a wholly owned subsidiary of Honeywell International, Inc., for the U.S. Department of Energy’s National Nuclear Security Administration under contract DE-NA-0003525. This paper describes objective technical results and analysis. Any subjective views or opinions that might be expressed in the paper do not necessarily represent the views of the U.S. Department of Energy or the United States Government.}

\begin{abstract}
In this work, we study the reproducing kernel (RK) collocation method for the peridynamic Navier equation. We first apply a linear RK approximation on both displacements and dilatation, then back-substitute dilatation, and solve the peridynamic Navier equation in a pure displacement form. The RK collocation scheme converges to the nonlocal limit and also to the local limit as nonlocal interactions vanish. The stability  is shown by comparing the collocation scheme with the standard Galerkin scheme using Fourier analysis.  We then apply the RK collocation to the quasi-discrete peridynamic Navier equation and show its convergence to the correct local limit when the ratio between the nonlocal length scale and the discretization parameter is fixed.  The analysis is carried out on a special family of rectilinear Cartesian grids for the RK collocation method with a designated kernel with finite support. We assume the Lam\'{e} parameters satisfy $\lambda \geq \mu$ to avoid adding extra constraints on the nonlocal kernel.  Finally, numerical experiments are conducted to validate the theoretical results.

\end{abstract}

\begin{keyword}
 Peridynamic Navier equation, reproducing kernel collocation, convergence analysis,  quasi-discrete nonlocal operator, meshfree integration, asymptotically compatible schemes
\end{keyword}

\end{frontmatter}

%% \linenumbers

%% main text
\section{Introduction}
Peridynamics is a nonlocal theory of continuum mechanics introduced by Silling in \cite{Silling2000,Silling2007}. Peridynamic models avoid the use of spatial differentiation and they have attracted interest among researchers, especially for treating problems with fractures and material failure \cite{Bobaru2012, Ha2010, Ouchi2017}. Mathematical analysis of the peridynamics models have been carried out in \cite{Marta2017,Du2013PD,Du2013nonlocal,Mengesha2014, mengesha2014nonlocal} and it is well understood that the linear peridynamic Naiver equation is well-posed. Many numerical methods have been developed to solve the peridynamic Naiver equation \cite{bobaru2016handbook, du2013posteriori,  Macek2007, Pasetto2018, Seleson2016, Seleson2016C, Silling2005, Tian2014, Trask2019} and this is the main focus of our work. Other than \cite{Tian2014} which is a variational method, the rest solve the nonlocal governing equation in the strong form but fail to show rigorous analysis. To our knowledge, this is the first work that provides convergence analysis for solving the strong form of the peridynamic Navier equation.

Nonlocal models introduce a length scale $\delta$, called the \textit{horizon} in peridynamics, which takes into account interactions over finite distances. As $\delta \to 0$, the nonlocal interactions vanish and the nonlocal model recovers its local limit, i.e., a partial differential equation.  Numerical methods that preserve this limiting behaviour in discrete form are called asymptotically compatible (AC) schemes \cite{Tian2013, Tian2014}; many numerical methods for nonlocal models are not AC and may converge to the wrong local limit \cite{Tian2013}. It is a challenge to design AC numerical schemes for nonlocal models. 
Another difficulty is accurate evaluation of the nonlocal integral, which can be computationally prohibitive especially when the nonlocal kernels are singular, and it is often necessary to use a high-order Guassian quadrature rule \cite{XChen2011,Pasetto2018}. Many works have been done to address these two challenges \cite{Yang2018, seleson2014improved,Seleson2016C,Tian2014,Trask2019,Yu2011}.

The Finite Element Method (FEM) \cite{Tian2014} with linear basis functions is AC but the evaluation of the double integral \cite{XChen2011} (in one-dimension) discourages the use of the variational formulation for nonlocal models.  Many mesh-free methods \cite{seleson2014improved, Seleson2016C,Silling2005} for peridynamics, which use the volume of the particles as integration weights, are easy to implement but these methods do not converge to the correct local limit as the nonlocal length scale vanishes. 
A mesh-free integration scheme for the peridynamic Navier equation is introduced in \cite{Trask2019}, however, it lacks convergence analysis. The quadrature weights are calculated using the generalized moving least square technique and this mesh-free integration scheme converges to the correct local limit for nonlocal diffusion \cite{Leng2019b}.
An important consequence of the mesh-free integration scheme is that it is straightforward to include ``bond breaking'' which provides a way to simulate fractures or material failure \cite{Trask2019}.   

We have developed an AC RK collocation scheme for nonlocal diffusion models and introduced a quasi-discrete nonlocal diffusion operator using a mesh-free integration technique \cite{Leng2019b} to avoid using high-order Gauss quadrature rules and save the computational costs. RK collocation on this quasi-discrete nonlocal diffusion operator converges to the correct local limit. The purpose of this work is to extend the methodology to the peridynamic Navier equation.

First, we show RK collocation on the peridynamic Navier equation is AC. We use a similar strategy as in \cite{Leng2019b} to show the stability and consistency of the RK collocation method. The key idea for stability analysis is to compare the Fourier representation of the collocation scheme with the Galerkin scheme \cite{Costabel1992,Leng2019b}; this idea has also appeared in \cite{arnold1984asymptotic, arnold1983asymptotic}. Since the Fourier symbol of the  peridynamic Navier operator is a matrix instead of a scalar, the stability analysis is more involved for the peridynamic Navier equation than that of the nonlocal diffusion. Indeed, in order to simplify the discussion, we need to assume that the two Lam\'e parameters, $\lambda$ and $\mu$, satisfy the constraint $\lambda\geq \mu$.  The uniform consistency, which is crucial to show that the scheme is AC,  is established using the synchronized convergence of the linear RK approximation \cite{chen2017meshfree,Li1996, Li1998synchronized}.
Then, to obviate the need to use high-order Gaussian quadrature rules, we introduce the quasi-discrete peridyanmic Navier equation. Convergence analysis of the RK collocation scheme on the quasi-discrete peridynamic Navier operator is presented when the ratio between horizon $\delta$ and the grid size $h_{\max}$ is fixed.

This paper is organized as follows. In  \cref{sec:ModelEquation}, we introduce the peridynamic Navier equation with Dirichlet boundary conditions and also the quasi-discrete counterparts using finite summation of symmetric quadrature points to replace the integral. In  \cref{sec:RKCollocation}, we present the RK collocation method with special choices of the RK support size. \Cref{sec:Convergence} discusses the convergence analysis of the RK collocation method for the peridynamic Naiver equation and shows that this RK collocation scheme is AC. Then the convergence analysis of the collocation method on the quasi-discrete  peridynamic Navier equation is presented in \cref{sec:DConvergence}. \Cref{sec:NumericalExample} gives numerical examples to complement our theoretical analysis. Finally, we provide conclusions in \cref{sec:Conclusion}.

\section{Peridynamic Navier equation} \label{sec:ModelEquation}
In this section, we first introduce some notations that are used throughout the paper. The spatial dimension is denoted as $\nd$ $(2 \textnormal{ or } 3) $. An arbitrary point $\bm{x} \in \mathbb{R}^\nd$ is expressed as $\bm{x}=(x_1, \ldots, x_{\nd})$. A multi-index,  $\bm{\alpha} = (\alpha_1, \ldots, \alpha_{\nd})$, is a collection of d non-negative integers and its length is $|\bm{\alpha}| = \sum_{i=1}^{\nd}\alpha_i$. As a consequence, we write $\bm{x}^{\bm{\alpha}}=x_1^{\alpha_1}\ldots x_{\nd}^{\alpha_{\nd}}$ for a given $\bm{\alpha}$. We let $\Omega \subset \mathbb{R}^{\nd}$ be an open bounded domain and then the corresponding interaction domain is defined as  
\[
\Omega_{\cI} = \{ \bm{x} \in \mathbb{R}^{\nd} \backslash \Omega :  \text{dist}(\bm{x}, \Omega) \leq 2 \delta\}\, ,
\] 
where $\delta$ is the nonlocal length scale and we denote $\Omega_{\delta} = \Omega \cup \Omega_{\cI}$.

Next, we present the state-based linearized peridynamic Navier equation introduced in \cite{Silling2007,Silling2010}, then use the quasi-discrete nonlocal operators proposed in \cite{Leng2019b} to formulate the quasi-discrete counterparts. 
We differ in convention of the notations from nonlocal vector calculus \cite{du2013posteriori} which is more suited for variational formulation \cite{Du2013PD}, but use instead the notations for the bond-based peridynamics operator together with the nonlocal divergence and gradient operators as defined in \cite{Tian2018StabNL, Tian2018SPH, lee2019nonlocal}. These notations will alleviate the presentation for collocation method which will be introduced in the next section.

\subsection{Nonlocal operators}
The linearized state-based  peridynamic Navier operator consists of two parts; one is the bond-based peridynamic operator and the other is the composition of the nonlocal gradient and divergence operators. The bond-based peridynamic operator is defined, for a given vector-valued function $\bm{u}(\bm{x}) : \Omega_{\delta} \to \mathbb{R}^\nd$, as 
\begin{equation} \label{eqn:BondOperator} 
\mathcal{L}^B_{\delta} \bm{u}(\bm{x}) = \int_{\Omega_{\delta}} \rho_{\delta}(|\bm{y}-\bm{x}|) \frac{\bm{y}-\bm{x}}{|\bm{y}-\bm{x}|} \otimes\frac{\bm{y}-\bm{x}}{|\bm{y}-\bm{x}|} (\bm{u}(\bm{y}) -\bm{u}(\bm{x})) d\bm{y}, \quad  \forall \bm{x}  \in \Omega, 
\end{equation}
where $\rho_{\delta} (|\bm{y}-\bm{x}|)$ is  the nonlocal kernel. We assume the nonlocal kernel is non-negative and symmetric, and has the following scaling,
\begin{equation} \label{eqn:NonlocalKernelScaling} 
\rho_{\delta}(|\bm{s}|) =  \frac{ 1}{ \delta^{\nd+2} } \rho \left(\frac{|\bm{s}|}{\delta} \right), 
\end{equation}
where $\rho (|\bm{s}|)$ is a non-negative and non-increasing function with compact support in $B_{1}$ (for the rest of the paper, we denote $B_{\delta}$ as $B_{\delta}(\bm{0})$, a ball of radius $\delta$ about $\bm{0}$), and it has a bounded second order moment, i.e.,
\begin{equation}  \label{eqn:BoundedMoment}
\int_{B_{1}}\rho(|\bm{s}|)|\bm{s}|^2 d\bm{s} = {\nd} \, .
\end{equation}
The weighted volume $m(\bm{x})$ is defined as
\begin{equation} \label{eqn:WeightedVolume} 
m(\bm{x}) = \int_{\Omega_{\delta}} \rho_{\delta}(|\bm{y}-\bm{x}|) |\bm{y}-\bm{x}|^2 d\bm{y}, \quad  \forall \bm{x}  \in \Omega.
\end{equation}
From \cref{eqn:WeightedVolume,eqn:NonlocalKernelScaling,eqn:BoundedMoment}, it is easy to see that $m(\bm{x}) = \nd$. We remark that the weighted volume defined here as \cref{eqn:WeightedVolume} is a scaled form of the definition in \cite{Silling2000,  Silling2007, Silling_Lehoucq2008}.
Next, the nonlocal divergence operator $\mathcal{D}_{\delta}$ is defined as \cite{Tian2018StabNL, Tian2018SPH},
\begin{equation} \notag 
\mathcal{D}_{\delta} \bm{u}(\bm{x})  = \int_{\Omega_{\delta}} \rho_{\delta}(|\bm{y}-\bm{x}|) (\bm{y}-\bm{x}) \cdot (\bm{u}(\bm{y}) + \bm{u}(\bm{x}) ) d\bm{y}, \quad \forall \bm{x}  \in \Omega,
\end{equation}
and in the sense of principle value, $\mathcal{D}_{\delta}$ can also be written as 
\begin{equation} \label{eqn:DivergenceOperator} 
\mathcal{D}_{\delta} \bm{u}(\bm{x}) = \int_{\Omega_{\delta}} \rho_{\delta}(|\bm{y}-\bm{x}|) (\bm{y}-\bm{x}) \cdot (\bm{u}(\bm{y}) - \bm{u}(\bm{x}) ) d\bm{y}, \quad  \forall \bm{x}  \in \Omega.
\end{equation}
Then, nonlocal dilatation $\theta(\bm{x})$ is defined from the nonlocal divergence operator,
\begin{equation} \label{eqn:Dilatation} 
\theta(\bm{x}) = \frac{\nd}{m(\bm{x})} \mathcal{D}_{\delta} \bm{u}(\bm{x}), \quad  \forall \bm{x}  \in \Omega.
\end{equation}
Then, the nonlocal gradient operator $\mathcal{G}_{\delta}$ is defined by
\begin{equation} \label{eqn:GradientOperator} 
\mathcal{G}_{\delta} \theta(\bm{x}) = \int_{\Omega_{\delta}} \rho_{\delta}(|\bm{y}-\bm{x}|) (\bm{y}-\bm{x}) (\theta(\bm{y}) -\theta(\bm{x})) d\bm{y}, \quad \forall  \bm{x}  \in \Omega.
\end{equation}

Finally, we have the linearized state-based peridynamic Navier operator, 
\begin{equation} \label{eqn:PDoperator}
\mathcal{L}^{S}_{\delta} \bm{u}(\bm{x}) = \frac{C_{\alpha} \, \mu }{m(\bm{x})}\mathcal{L}^B_{\delta} \bm{u}(\bm{x}) + \frac{C_{\beta} \, \nd (\lambda - \mu) }{(m(\bm{x}))^2} \mathcal{G}_{\delta}  \mathcal{D}_{\delta} \bm{u}(\bm{x}), \quad \forall \bm{x} \in \Omega,
\end{equation}
where $C_{\alpha}$ and $C_{\beta}$  are scaling parameters which will be given shortly, and $\lambda$ and $\mu$ are Lam\'{e} parameters which are assumed to be constants in this work. The static peridynamic Navier equation with Dirichlet boundary condition can be formulated as
\begin{equation}  \label{eqn:NonlocalEqn}
\begin{cases}
-\mathcal{L}^{S}_{\delta} \bm{u} = \bm{f}, & \textnormal{in } \Omega, \\
 \quad \quad \bm{u} = \bm{0}, & \textnormal{on } \Omega_{\mathcal{I}}.
\end{cases}
\end{equation}
%%%%%%%%%%%%% comment out mixed form %%%%%%%%%%%%%%%%%%%%
%\begin{comment}
By introducing $\displaystyle p = (\lambda - \mu) \theta$, we can write \cref{eqn:NonlocalEqn} in a mixed form, as follows,
\begin{equation}  \label{eqn:Mixed}
\begin{cases}  
\displaystyle -\frac{C_{\alpha} \, \mu }{m(\bm{x})}\mathcal{L}^B_{\delta} \bm{u}(\bm{x}) - \frac{C_{\beta}}{m(\bm{x})} \mathcal{G}_{\delta} p (\bm{x}) = \bm{f}(\bm{x}), & \bm{x} \in \Omega,  \\[8pt]
\displaystyle \frac{\nd \, ( \lambda - \mu)}{m(\bm{x})} \mathcal{D}_{\delta} \bm{u}(\bm{x}) - p (\bm{x})  = 0,  &\bm{x} \in \Omega, \\
\bm{u}(\bm{x}) = \bm{0},  & \bm{x} \in \Omega_{\mathcal{I}}.
\end{cases}
\end{equation}
%\end{comment}
%%%%%%%%%%%%% end of comment %%%%%%%%%%%%%%%%%%

The local limit of $\mathcal{L}^S_{\delta}$ is denoted as $\mathcal{L}^S_0$ when $\delta \rightarrow 0$ \cite{Mengesha2014}. We select $C_{\alpha} = 30, C_{\beta} =3$ for three-dimensional linear elasticity and $C_{\alpha} = 16, C_{\beta} =2$ for two-dimensional plane strain, then
\[
\mathcal{L}^S_0 \bm{u}(\bm{x})= \mu \textnormal{div}(\nabla \bm{u}(\bm{x})) + (\mu+\lambda) \nabla \textnormal{div} \bm{u}(\bm{x}), \quad \forall \bm{x} \in \Omega, 
\] 
and \cref{eqn:NonlocalEqn}  becomes  
\begin{equation}  \label{eqn:LocalEqn}
\begin{cases}
-\mathcal{L}^S_{0} \bm{u} = \bm{f}_0, & \textnormal{in } \Omega, \\
 \quad \quad \bm{u} = \bm{0}, & \textnormal{on } \partial \Omega.
\end{cases}
\end{equation}
We define a space on $\Omega_{\delta}$ with zero volumetric constraint on $\Omega_{\mathcal{I}}$,
\begin{equation} \notag 
L_c^2(\Omega_{\delta}) := \{ \bm{u} \in L^2(\Omega_{\delta}) \mid \bm{u} = \bm{0} \textnormal{ on } \Omega_{\mathcal{I}} \}.
\end{equation}
The natural energy space associated with \cref{eqn:NonlocalEqn} is given in \cite{mengesha2014nonlocal} as
\begin{equation} \notag
\mathcal{S}_{\delta} :=\left \{ \bm{u} \in L_c^2(\Omega_{\delta}): \int_{\Omega_{\delta}} \int_{\Omega_{\delta}}\rho_{\delta}(|\bm{y}-\bm{x}|)(\textnormal{Tr}(\mathcal{D}^*\bm{u})(\bm{y},\bm{x}))^2d\bm{y} d\bm{x} < \infty \right \},
\end{equation}
where $\textnormal{Tr}(\mathcal{D}^*\bm{u})$ is the trace of the operator $\mathcal{D}^*$ defined in \cite{Du2013nonlocal, mengesha2014nonlocal} as
\[
\mathcal{D}^*\bm{u}(\bm{y},\bm{x}):=(\bm{u}(\bm{y})-\bm{u}(\bm{x})) \otimes \frac{\bm{y}-\bm{x}}{|\bm{y}-\bm{x}|}.
\]
The static peridynamic Navier equation (\cref{eqn:NonlocalEqn}) is well-posed and uniformly stable as given in the following theorem \cite{mengesha2014nonlocal}. 
\begin{thm} \label{thm:nonlocalmapping}
%\textbf{(Uniform stability)}  
Assume $\delta \in (0, \delta_0]$ for some $\delta_0 > 0$. The bilinear form $(-\mathcal{L}^S_{\delta}\bm{u}, \bm{u})$ is an inner product and there exists a constant $C > 0$ which depends on $\delta_0$, such that
\begin{equation} \notag 
|(-\mathcal{L}^{S}_{\delta} \bm{u}, \bm{u})| \geq C \| \bm{u} \|^2_{  L^2(\Omega_{\delta}; \, \mathbb{R}^{\nd}) } \,, \quad \forall \, \bm{u} \in \mathcal{S}_{\delta}.
\end{equation}  
\end{thm}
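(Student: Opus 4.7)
The plan is to rewrite $(-\mathcal{L}^S_\delta \bm u, \bm u)$ as a sum of two non-negative quadratic forms by symmetrizing in the integration variables, to deduce the inner-product property from this representation, and then to upgrade non-negativity to $L^2$-coercivity via a nonlocal Poincar\'e--Korn inequality whose constant is uniform in $\delta \in (0,\delta_0]$.

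First, using the symmetry $\rho_\delta(|\bm y - \bm x|) = \rho_\delta(|\bm x - \bm y|)$ and Fubini, I would symmetrize each summand of $\mathcal{L}^S_\delta$. Swapping $\bm x$ and $\bm y$ in the double integral converts the bond-based contribution into
\begin{equation*}
(-\mathcal{L}^B_\delta \bm u, \bm u) = \tfrac{1}{2}\int_{\Omega_\delta}\!\!\int_{\Omega_\delta}\rho_\delta(|\bm y-\bm x|)\bigl(\textnormal{Tr}(\mathcal{D}^*\bm u)(\bm y,\bm x)\bigr)^2 d\bm y\, d\bm x,
\end{equation*}
and an analogous symmetrization, together with the fact that $\mathcal{G}_\delta$ acts as minus the nonlocal $L^2$-adjoint of $\mathcal{D}_\delta$ on functions satisfying the zero volumetric constraint on $\Omega_\mathcal{I}$, yields
\begin{equation*}
(-\mathcal{G}_\delta \mathcal{D}_\delta \bm u, \bm u) = \|\mathcal{D}_\delta \bm u\|_{L^2(\Omega_\delta)}^2.
\end{equation*}
Since $m(\bm x) \equiv \nd$, both prefactors in $-\mathcal{L}^S_\delta$ are positive constants under $\lambda \geq \mu$, making the form symmetric and non-negative; for the remaining admissible Lam\'e pairs one absorbs the possibly-negative dilatational piece into the bond-based energy via the pointwise Cauchy--Schwarz bound $|\mathcal{D}_\delta \bm u(\bm x)|^2 \leq m(\bm x)\int \rho_\delta (\textnormal{Tr}\mathcal{D}^*\bm u)^2 d\bm y$ that follows from writing $\mathcal{D}_\delta \bm u(\bm x) = \int \rho_\delta |\bm y - \bm x|\,\textnormal{Tr}\mathcal{D}^*\bm u(\bm y,\bm x)\, d\bm y$. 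Vanishing of $(-\mathcal{L}^S_\delta \bm u, \bm u)$ then forces $\textnormal{Tr}(\mathcal{D}^*\bm u)\equiv 0$ $\rho_\delta$-a.e.\ on $\Omega_\delta\times\Omega_\delta$, which combined with $\bm u = \bm 0$ on $\Omega_\mathcal{I}$ propagates by nonlocal rigidity to $\bm u \equiv \bm 0$ on $\Omega_\delta$, establishing the inner-product claim.

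The main obstacle is the quantitative lower bound, which reduces to the nonlocal Poincar\'e--Korn-type inequality
\begin{equation*}
\|\bm u\|_{L^2(\Omega_\delta;\mathbb{R}^\nd)}^2 \leq C(\delta_0,\Omega)\int_{\Omega_\delta}\!\!\int_{\Omega_\delta}\rho_\delta(|\bm y-\bm x|)\bigl(\textnormal{Tr}(\mathcal{D}^*\bm u)(\bm y,\bm x)\bigr)^2 d\bm y\, d\bm x, \quad \forall\, \bm u \in \mathcal{S}_\delta,
\end{equation*}
with $C$ independent of $\delta \in (0,\delta_0]$. A naive Cauchy--Schwarz produces a constant that blows up as $\delta \to 0$, so I would instead proceed by contradiction: assuming failure, extract $\delta_n \in (0,\delta_0]$ and $\bm u_n \in \mathcal{S}_{\delta_n}$ with $\|\bm u_n\|_{L^2} = 1$ and vanishing nonlocal bond-based energy, then invoke a Bourgain--Brezis--Mironescu / Du--Mengesha-style nonlocal Fr\'echet--Kolmogorov compactness theorem (which leverages precisely the kernel scaling \eqref{eqn:NonlocalKernelScaling} and the bounded second-moment condition \eqref{eqn:BoundedMoment}) to pass to an $L^2$-strong limit $\bm u_\infty \in H^1_0(\Omega;\mathbb{R}^\nd)$ of unit norm whose classical symmetric gradient vanishes on $\Omega$; classical Korn's inequality on $H^1_0$ then forces $\bm u_\infty \equiv \bm 0$, a contradiction. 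The technical crux is establishing this nonlocal compactness \emph{uniformly in $\delta$}, which rests on controlling appropriate nonlocal seminorms by the bond-based strain-energy seminorm and then transferring strong convergence across the full range $\delta \in (0,\delta_0]$.
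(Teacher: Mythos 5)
The paper does not prove this theorem: it is quoted verbatim from the cited reference \cite{mengesha2014nonlocal}, so there is no in-paper argument to compare against. Your proposal is essentially a reconstruction of that external proof, and its architecture is sound: the symmetrization identity $(-\mathcal{L}^B_\delta\bm u,\bm u)=\tfrac12\iint\rho_\delta(\mathrm{Tr}(\mathcal{D}^*\bm u))^2$, the duality $(\mathcal{G}_\delta\theta,\bm u)=-(\theta,\mathcal{D}_\delta\bm u)$ giving $(-\mathcal{G}_\delta\mathcal{D}_\delta\bm u,\bm u)=\|\mathcal{D}_\delta\bm u\|^2$, and the pointwise Cauchy--Schwarz bound $|\mathcal{D}_\delta\bm u(\bm x)|^2\le m(\bm x)\int\rho_\delta(\mathrm{Tr}\,\mathcal{D}^*\bm u)^2\,d\bm y$ used to absorb a negative dilatational coefficient are all correct, and together they do reduce the theorem to a uniform-in-$\delta$ nonlocal Poincar\'e--Korn inequality.

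The genuine gap is in the compactness step that is supposed to deliver that uniform constant. Your contradiction argument extracts $\delta_n\in(0,\delta_0]$ and $\bm u_n$ with $\|\bm u_n\|_{L^2}=1$ and vanishing bond-based energy, and then invokes BBM-type compactness to obtain a strong $L^2$ limit in $H^1_0$. That works only along subsequences with $\delta_n\to 0$. If $\delta_n\to\delta_*>0$ (or stays bounded away from $0$), the seminorm $\iint\rho_{\delta_n}(\mathrm{Tr}\,\mathcal{D}^*\bm u_n)^2$ gives \emph{no} compactness: for an integrable kernel the energy space $\mathcal{S}_\delta$ coincides with $L^2_c(\Omega_\delta)$, so bounded-energy sequences need not have strongly convergent subsequences, and the step ``pass to an $L^2$-strong limit of unit norm'' fails. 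Handling this regime requires a separate argument --- e.g., a direct Poincar\'e inequality for each fixed $\delta$ (via the structure of $-\mathcal{L}^B_\delta$ as a positive multiplication operator minus a bounded convolution-type operator, together with the nonlocal rigidity lemma identifying the kernel of the strain seminorm with infinitesimal rigid motions) plus continuity of the resulting constant on compact subsets of $(0,\delta_0]$. Relatedly, the ``nonlocal rigidity'' you use to conclude the inner-product property (that $\mathrm{Tr}(\mathcal{D}^*\bm u)\equiv 0$ a.e.\ forces $\bm u$ to be an infinitesimal rigid motion, hence zero under the volumetric constraint) is itself a nontrivial lemma for $L^2$ fields and should be stated and justified rather than asserted. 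With those two pieces supplied, the proof closes.
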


\subsection{Quasi-discrete nonlocal operators} \label{subsec:quasi-nonlocal}
As introduced in \cite{Leng2019b}, we use a finite number of symmetric quadrature points $\bm{s}$ in the horizon to evaluate the integral such that the weighted volume defined in \cref{eqn:WeightedVolume} is exact, 
\begin{equation} \label{eqn:DWeightedVolume}
m(\bm{x}) = \sum\limits_{\bm{s} \in B^{\epsilon}_{\delta}(\bm{0})}  \omega_{\delta}(\bm{s}) \rho_{\delta}(|\bm{s}|) |\bm{s}|^2 =  \nd , \quad \forall \bm{x} \in \Omega,
\end{equation}
where $\omega_{\delta}(\bm{s})$ is the quadrature weight at the quadrature point $\bm{s}$ and $ B^{\epsilon}_{\delta}(\bm{0}) $ is a finite collection of symmetric quadrature points $\bm{s}$ in the ball of radius $\delta$ about $\bm{0}$. The notation $\epsilon$ can be seen as the discretization parameter of the ball $B_{\delta}$ and we assume $\epsilon_1:=\epsilon /\delta$ is a fixed number.  We use $B^{\epsilon}_{\delta}$ to denote $B^{\epsilon}_{\delta}(\bm{0})$ for the rest of the paper.  An example of quadrature points in the horizon of an arbitrary point $\bm{x} \in \Omega$ is shown in \cref{fig:DiscreteBall}. 

\begin{figure}[H]%htb
\centering
\scalebox{1}{\includegraphics{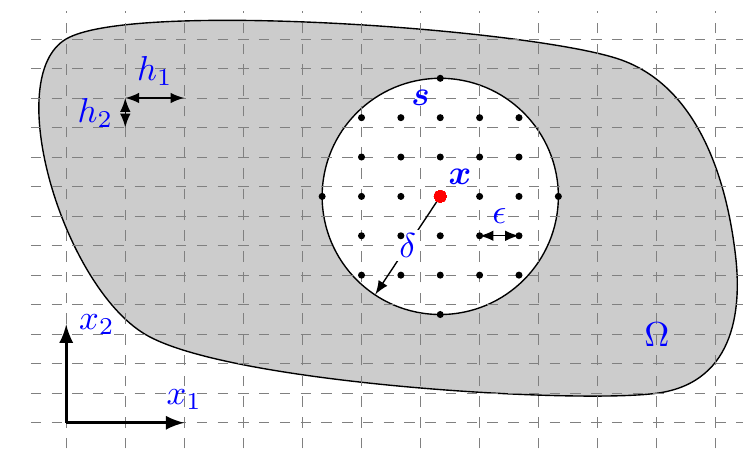}}
\caption{Quadrature points (black dots) are shown in the horizon of an arbitrary point $\bm{x}\in\Omega$. The dashed lines form the RK collocation grid which will be introduced in \cref{sec:RKCollocation}.
}
\label{fig:DiscreteBall}
\end{figure}
\noindent 
Due to the scaling of the nonlocal kernel $\rho_{\delta}(|\bm{s}|)$, see \cref{eqn:NonlocalKernelScaling}, we also have the scaling of the quadrature weights $\omega_{\delta}(|\bm{s}|)$ as
\begin{equation} \notag
\omega_{\delta}(\bm{s}) =\delta^{\nd} \omega \left( \frac{\bm{s}}{\delta} \right),
\end{equation}
where $\omega(\bm{s})$ is the quadrature weight  at $\bm{s} \in B^{\epsilon_1}_1$ and $\ep_1=\ep/\del$ is the discretization parameter of the unit ball.  As a consequence, we have a discrete version of \cref{eqn:BoundedMoment} as,
\begin{equation} \label{eqn:DBoundedMoment}
\sum\limits_{\bm{s} \in B^{\epsilon_1}_1}  \omega(\bm{s}) \rho(|\bm{s}|) |\bm{s}|^2 = \nd. 
\end{equation} 
However, unlike the nonlocal diffusion in \cite{Leng2019b}, \cref{eqn:DBoundedMoment} is not sufficient for the peridynamic Navier equation and we will revisit the construction of quadrature weights at the end of this section.

For $\bm{u}(\bm{x}) \in C^0(\mathbb{R}^\nd ; \, \mathbb{R}^{\nd})$, we can formulate the quasi-discrete counterpart of the nonlocal operators defined in the previous subsection.  The quasi-discrete bond-based peridynamic operator $\mathcal{L}^B_{\delta, \epsilon}$ is defined as
\begin{equation} \label{eqn:DiscreteNonlocalOperator}
\mathcal{L}^B_{\delta, \epsilon} \bm{u}(\bm{x}) = \sum\limits_{\bm{s} \in B^{\epsilon}_{\delta}}  \omega_{\delta}(\bm{s}) \rho_{\delta}(|\bm{s}|) \frac{\bm{s}}{|\bm{s}|} \otimes\frac{\bm{s}}{|\bm{s}|} (\bm{u}(\bm{x}+\bm{s})-\bm{u}(\bm{x})), \quad \forall \, \bm{x} \in \Omega. 
\end{equation} 
Similarly, the quasi-discrete nonlocal divergence operator $\mathcal{D}^{\epsilon}_{\delta}$ is formulated as , 
\begin{equation} \label{eqn:Ddivergence}  
\mathcal{D}^{\epsilon}_{\delta} \bm{u}(\bm{x}) = \sum\limits_{\bm{s} \in B^{\epsilon}_{\delta}}  \omega_{\delta}(\bm{s}) \rho_{\delta}(|\bm{s}|) \, \bm{s} \cdot (\bm{u}(\bm{x}+\bm{s}) - \bm{u}(\bm{x})), \quad \forall \, \bm{x} \in \Omega.
\end{equation}
A direct consequence of the quasi-discrete nonlocal divergence operator is the nonlocal dilatation,
\begin{equation} \label{eqn:Ddilatation} 
\theta^{\epsilon}(\bm{x}) = \frac{\nd}{m(\bm{x})} \mathcal{D}^{\epsilon}_{\delta} \bm{u}(\bm{x}), \quad \forall \, \bm{x} \in \Omega.
\end{equation}
We want to emphasize that $\theta^{\epsilon}$ is a continuous function with respect to $\bm{x}$ and its definition differs from \cref{eqn:Dilatation}.
We also have the quasi-discrete nonlocal gradient operator,
\begin{equation} \label{eqn:Dgradient} 
\mathcal{G}^{\epsilon}_{\delta} \theta^{\epsilon}(\bm{x}) = \sum\limits_{\bm{s} \in B^{\epsilon}_{\delta}} \omega_{\delta}(\bm{s})\rho_{\delta}(|\bm{s}|) \, \bm{s} \, (\theta^{\epsilon}(\bm{x}+\bm{s}) - \theta^{\epsilon}(\bm{x})), \quad \forall \, \bm{x} \in \Omega.
\end{equation}

Finally, we arrive at the linearized state-based quasi-discrete peridynamic Navier operator,
\begin{equation} \label{eqn:DPDoperator}
\mathcal{L}^S_{\delta, \epsilon} \bm{u}(\bm{x}) = \frac{C_{\alpha} \mu }{m(\bm{x})}\mathcal{L}^B_{\delta, \epsilon} \bm{u}(\bm{x}) + \frac{C_{\beta} \nd (\lambda - \mu) }{(m(\bm{x}))^2} \mathcal{G}^{\epsilon}_{\delta}  \mathcal{D}^{\epsilon}_{\delta} \bm{u}(\bm{x}), \quad \forall \, \bm{x} \in \Omega,
\end{equation}
and the static peridynamic Navier equation can be reformulated as
\begin{equation}  \label{eqn:DNonlocalEqn}
\begin{cases}
-\mathcal{L}^S_{\delta,\epsilon} \bm{u} = \bm{f}, & \textnormal{in } \Omega, \\
 \quad \quad \,\,\, \bm{u} = \bm{0}, & \textnormal{on } \Omega_{\mathcal{I}}.
\end{cases}
\end{equation}
%%%%%%%%%%%%%%%%%%%%%%%%%%%% comment out mixed form %%%%%%%%%%%%%%%%%%%%%
%\begin{comment}
Similar to \cref{eqn:Mixed}, if we let $\displaystyle p = (\lambda - \mu) \theta^{\epsilon}$, we can write \cref{eqn:DNonlocalEqn} as
\begin{equation}  \label{eqn:DMixed}
\begin{cases} 
\displaystyle -\frac{C_{\alpha} \, \mu }{m(\bm{x})}\mathcal{L}^B_{\delta, \epsilon} \bm{u}(\bm{x}) - \frac{C_{\beta}}{m(\bm{x})} \mathcal{G}^{\epsilon}_{\delta} p (\bm{x}) = \bm{f}(\bm{x}), & \bm{x} \in \Omega,  \\[8pt]
\displaystyle \frac{\nd \, ( \lambda - \mu)}{m(\bm{x})} \mathcal{D}^{\epsilon}_{\delta} \bm{u}(\bm{x}) - p (\bm{x})  = 0,  &\bm{x} \in \Omega, \\
\bm{u}(\bm{x}) = \bm{0},  & \bm{x} \in \Omega_{\mathcal{I}}.
\end{cases}
\end{equation}
%\end{comment}
%%%%%%%%%%%%%%%%%%%%%%%%%%%%%%%%% end of comment %%%%%%%%%%%%%%%%%%%%%%

If, for $\bm{u}$ being any quadratic polynomials,
\begin{equation} \label{eqn:DQuadraticExactness}
\mathcal{L}^S_{\delta, \epsilon} \bm{u} = \mathcal{L}^S_{\delta} \bm{u}, 
\end{equation}
the quasi-discrete peridynamic Navier operator $\mathcal{L}^S_{\delta, \epsilon}$ converges to $\mathcal{L}^S_{0}$ as $\delta$ goes to $0$ and $\epsilon$ is fixed.
\Cref{eqn:DQuadraticExactness} is not guaranteed by \cref{eqn:DBoundedMoment} but is fulfilled if the following is satisfied,  i.e., 
\begin{equation} \label{eqn:DBoundedMomentFourth2}
\sum\limits_{\bm{s} \in B^{\epsilon_1}_1}  \omega(|\bm{s}|) \rho(|\bm{s}|) \frac{s_i^2 s_j^2}{|\bm{s}|^2} = \int_{\bm{s} \in B_1}  \rho(|\bm{s}|) \frac{s_i^2 s_j^2}{|\bm{s}|^2} d\bm{s},
\end{equation} 
for $i, j = 1, \ldots, \nd$. It is easy to see that \cref{eqn:DBoundedMomentFourth2} is bounded because of \cref{eqn:BoundedMoment} and it is a reformulation of \cref{eqn:DWeightedVolume} by adding more constraints.

\section{RK collocation method} \label{sec:RKCollocation}
In this section, we discuss the RK collocation method and formulate the collocation equation on the peridynamic Navier equation and its quasi-discrete counterpart.
First, we introduce the collocation grid and let  $\square$ be a rectilinear Cartesian grid on $\mathbb{R}^{\nd}$, 
\begin{equation}  \notag
\square := \{ \bm{x_k} :=  \bm{k} \odot \bm{h} \mid \bm{k} \in \mathbb{Z}^{\nd} \},
\end{equation}
where  $\odot$ denotes component-wise multiplication, i.e.,
\begin{equation} \notag 
\bm{k} \odot \bm{h} = (k_1h_1, \ldots, k_{\nd}h_{\nd}),
\end{equation}
$ \bm{k}=(k_1, \ldots, k_\nd)$, and $\bm{h}=(h_1, \ldots, h_{\nd}) $ where $h_j$ is the discretization parameter in $j$-th dimension;
component-wise division is then denoted as $\oslash$: 
\begin{equation} \notag 
\bm{k} \oslash \bm{h} = \left(\frac{k_1}{h_1}, \ldots, \frac{k_{\nd}}{h_{\nd}} \right).
\end{equation}
We remark that the grid size $h_j$ can vary for different $j$ and we let $h_{\max} = \max^\nd_{j=1}h_j$ and $h_{\min} = \min^\nd_{j=1}h_j$. For instance, in two dimension, rectangular grids are allowed. In addition, the grid $\square$ is quasi-uniform such that $\bm{h}$ can be rewritten as
\begin{equation} \label{eqn:GridVector} 
\bm{h} =  h_{\max} \bm{\hat{h}}\,,
\end{equation}
where $\bm{\hat{h}}$ is a fixed vector with the maximum component being 1 and the minimum component being bounded below. 

Next, we let $S(\square)$ be the trial space equipped with RK basis on $\square$, i.e., $S(\square) =\textnormal{span}\{\Psi_{\bm{k}}(\bm{x}) \mid \bm{k} \in \mathbb{Z}^{\nd}\} $. 
The RK basis function $\Psi_{\bm{k}}(\bm{x})$ is given as 
\begin{equation}   \label{eqn:RKShape}
\Psi_{\bm{k}}(\bm{x}) =\prod^{\nd}_{j=1} \phi \left( \frac{|x_j-x_{k_j}|}{2h_j} \right) ,
\end{equation}
where $x_{k_j} = k_jh_j $ is the $j$-th component of $\bm{x_k}$, $2h_j$ is the RK support in the $j$-th dimension, and $\phi(x)$ is the cubic B-spline function
\begin{equation}  \label{eqn:CubicSpline}
\phi(x) = 
\begin{cases}
\frac{2}{3}-4x^2+4x^3 , \quad & 0 \leq x \leq \frac{1}{2}, \\
\frac{4}{3}(1-x)^3, & \frac{1}{2} \leq x \leq1, \\
0, & \textnormal{otherwise}.
\end{cases}
\end{equation}

\begin{remark}
For the simplicity of presentation, we choose the RK support size $\bm{a}=a_0\bm{h}$ where $a_0=2$ in the paper but the analysis works for general even number $a_0$ \cite{Leng2019a,Leng2019b}.
\end{remark}

Thus, the RK basis function can reproduce linear polynomials \cite{Leng2019a,Li1996,Liu1995}, i.e., 
\begin{equation}  \label{eqn:LinearRC}
\sum_{\bm{k} \in \mathbb{Z}^{\nd}} \Psi_{\bm{k}}(\bm{x}) \bm{x_k}^{\bm{\alpha}} = \bm{x}^{\bm{\alpha}},	\textnormal{ for } |\bm{\alpha}|=1.
\end{equation}
For $u \in  C^0(\mathbb{R})$, we define the restriction to $\square$ by
\begin{equation} \notag
r^hu : = (u(\bm{x_k}))_{\bm{k}\in \mathbb{Z}^{\nd}}, 
\end{equation}
and the restriction to ($\square \cap \Omega$) as 
\[
r^h_{\Omega} u  := (u(\bm{x_k})), \quad \bm{x}_{\bm{k}} \in (\square \cap \Omega).
\]
For a sequence $(u_{\bm{k}})_{\bm{k}\in\mathbb{Z}^{\nd}}$ on $\mathbb{R}$, the RK interpolant operator is defined by  
\begin{equation} \notag
i^h(u_{\bm{k}}) : = \sum_{\bm{k} \in \mathbb{Z}^{\nd}}  \Psi_{\bm{k}}(\bm{x}) u_{\bm{k}}.
\end{equation}
For $j = 1, \ldots , \nd$, we denote $u_j(\bm{x}): \mathbb{R}^{\nd} \to \mathbb{R}$ the $j$-th component of a vector field $\bm{u}(\bm{x})=[u_1(\bm{x}),\ldots, u_\nd(\bm{x})]^T$ and denote $(u_{j,\bm{k}})$ the $j$-th component of the vector sequence  
% \[ \bm{u}_{\bm{k}} = \left[u_{1,\bm{k}}, u_{2,\bm{k}}, u_{3,\bm{k}}\right]^T_{\bm{k} \in \mathbb{Z}^3}  \]
\[
(\bm{u_{k}}) = [(u_{1,\bm{k}})_{\bm{k} \in \Z^\nd}, \ldots, (u_{\nd,\bm{k}})_{\bm{k} \in \Z^\nd}]^T .
\]
Then we let
\begin{equation} \notag 
\Pi^h := i^hr^h
\end{equation}
be the interpolation projector mapping from $C^0(\mathbb{R}^{ \nd })$ to  $S(\square)$. Therefore, we can write
\begin{equation} \notag
\Pi^h\bm{u} : = [\Pi^hu_1, \ldots ,\Pi^hu_{\nd}]^T,
\end{equation}
where $\Pi^h u_j(\bm{x})$ is the RK approximation of $u_j(\bm{x})$,
\begin{equation} \notag 
\Pi^h u_j(\bm{x}) = \sum\limits_{\bm{k} \in \mathbb{Z}^{\nd}} \Psi_{\bm{k}}(\bm{x})u_j(\bm{x_k}).
\end{equation}

Finally, we apply RK approximation on both $\bm{u}$ and $\theta$, back-substitute $\theta$ into the first equation of \cref{eqn:Mixed} and obtain 
\[
\mathcal{L}^S_{\delta}\Pi^h\bm{u} = \frac{C_{\alpha} \, \mu }{m(\bm{x})}\mathcal{L}^B_{\delta} \Pi^h \bm{u}  + \frac{C_{\beta} \nd \, ( \lambda - \mu)}{(m(\bm{x}))^2} \mathcal{G}_{\delta} \Pi^h (\mathcal{D}_{\delta} \Pi^h \bm{u}).
\]
Following a similar procedure, we arrive at
\[
\mathcal{L}^S_{\delta, \epsilon}\Pi^h\bm{u} = \frac{C_{\alpha} \, \mu }{m(\bm{x})}\mathcal{L}^B_{\delta, \epsilon} \Pi^h \bm{u}  + \frac{C_{\beta} \nd \, ( \lambda - \mu)}{(m(\bm{x}))^2} \mathcal{G}^{\epsilon}_{\delta} \Pi^h (\mathcal{D}^{\epsilon}_{\delta} \Pi^h \bm{u}).
\]
Therefore the RK collocation scheme of \cref{eqn:NonlocalEqn,eqn:DNonlocalEqn} can be written in the following forms.  
Find a function $\bm{u} \in S(\square \cap \Omega; \mathbb{R}^{\nd} )$, such that
\begin{equation}  \label{eqn:CollocationScheme}
-r_{\Omega}^h \mathcal{L}^S_{\delta} \bm{u} = r^h_{\Omega} \bm{f}, 
\end{equation}
and 
\begin{equation}  \label{eqn:DCollocationScheme}
-r_{\Omega}^h \mathcal{L}^S_{\delta, \, \epsilon} \bm{u} = r^h_{\Omega} \bm{f},
\end{equation}
where for $\bm{u}$ coming from the trial space, we have abused the notations and let $ \mathcal{L}^S_{\delta}\bm{u} $ and $\mathcal{L}^S_{\delta, \epsilon}\bm{u} $ represent, 
\[
\mathcal{L}^S_{\delta}\bm{u} = \frac{C_{\alpha} \, \mu }{m(\bm{x})}\mathcal{L}^B_{\delta} \bm{u}  + \frac{C_{\beta} \nd \, ( \lambda - \mu)}{(m(\bm{x}))^2} \mathcal{G}_{\delta} \Pi^h (\mathcal{D}_{\delta} \bm{u}),
\]
and 
\[
\mathcal{L}^S_{\delta, \epsilon}\bm{u} = \frac{C_{\alpha} \, \mu }{m(\bm{x})}\mathcal{L}^B_{\delta, \epsilon}  \bm{u}  + \frac{C_{\beta} \nd \, ( \lambda - \mu)}{(m(\bm{x}))^2} \mathcal{G}^{\epsilon}_{\delta} \Pi^h (\mathcal{D}^{\epsilon}_{\delta}  \bm{u}) .
\]
The main contribution of this paper is to show the convergence analysis of the two collocation schemes.

\section{Convergence analysis of RK collocation method} \label{sec:Convergence}
In this section, we show the convergence analysis of the RK collocation scheme \cref{eqn:CollocationScheme},
which is used in \cite{Pasetto2018} without any analysis. A convergence proof for nonlocal diffusion problems is provided in \cite{Leng2019b}, and the analysis is extended to the peridynamic Navier equation in this work.
The main objective is to show that the solution of the numerical scheme converges to the nonlocal problem for a fixed $\delta$ and $h_{\max}$ vanishes,
and to the correct local problem as $\delta$ and grid size $h_{\max}$ both go to zero. 

\subsection{Stability analysis of the RK collocation method} \label{subsec:stability}
In this subsection, we show the stability analysis of the RK collocation method. We first define a norm in the space of vector-valued sequences by 
\begin{equation} \label{eqn:l2norm}
|(\bm{u}_{\bm{k}})_{\bm{k} \in {\mathbb{Z}^{\nd}}}|_h := \| i^h(\bm{u}_{\bm{k}})  \|_{L^2(\mathbb{R}^{\nd}; \, \mathbb{R}^{\nd})} \,\, .
\end{equation}
For a sequence $(\bm{u}_{\bm{k}})$ only defined for $\bm{k}$ being in a subset of $\mathbb{Z}^{\nd}$, we can always extend $(\bm{u}_{\bm{k}})$ by zero to $\bm{k} \in \mathbb{Z}^{\nd}$. 
Then without further explanation,
$|(\bm{u}_{\bm{k}}) |_h $  is always understood as \eqref{eqn:l2norm} 
with zero extension. We next borrow the idea from \cite{Costabel1992, Leng2019b} and compare the RK collocation scheme with the Galerkin scheme using Fourier analysis.

\begin{thm} \label{thm:stability}
%\textbf{(Stability I)} 
For any $\delta \in(0, \delta_0]$, there exists a constant $C$ that depends on $\Omega$ and $\delta_0$, such that for $\bm{u} \in S(\square \cap \Omega; \mathbb{R}^{\nd})$, 
\[
|r^h_{\Omega}(-\mathcal{L}^S_{\delta} \bm{u})|_{h} \geq C \| \bm{u}\| _{L^2(\mathbb{R}^{\nd}; \mathbb{R}^{\nd})}.
\]
\end{thm}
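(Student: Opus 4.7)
The plan is to adapt the Fourier comparison technique of \cite{Costabel1992, Leng2019b} from the scalar nonlocal diffusion setting to the matrix-valued symbol of the peridynamic Navier operator. First, I would extend any $\bm{u}\in S(\square\cap\Omega;\R^\nd)$ to all of $\R^\nd$ by setting the RK coefficients to zero outside $\Omega$; the extended $\bm{u}=\sum_{\bm{k}\in\Z^\nd}\bm{u}_{\bm{k}}\Psi_{\bm{k}}$ is compactly supported in $\Omega_\delta$ and hence lies in the energy space $\mathcal{S}_\delta$. Since neither side of the claimed inequality changes under this extension, it suffices to prove the estimate for the extended sequence.

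Next, since the nonlocal operators $\mathcal{L}^B_\delta$, $\mathcal{D}_\delta$, $\mathcal{G}_\delta$ are all translation-invariant convolutions, they are diagonalized by the Fourier transform on $\R^\nd$, producing an $\nd\times\nd$ matrix symbol $M_\delta(\xi)$ for $\mathcal{L}^S_\delta$ acting on $\widehat{\bm{u}}(\xi)$. The sampled sequence $r^h(\mathcal{L}^S_\delta\bm{u})$ and its RK interpolant have Fourier representations that involve aliasing sums over the dual lattice $(2\pi\Z^\nd)\oslash\bm{h}$ together with multiplication by $\widehat{\Psi}(\xi)$. By Parseval, $|r^h_\Omega(-\mathcal{L}^S_\delta\bm{u})|_h^2$ is then equivalent to an integral over the first Brillouin zone of $|\widehat{\Psi}(\xi)|^2$ times a matrix-symbol expression applied to $\widehat{\bm{u}}(\xi)$. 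A classical bound for the tensor-product cubic B-spline gives $|\widehat{\Psi}(\xi)|\gtrsim 1$ on that zone, so the problem reduces to a uniform pointwise lower bound on $M_\delta(\xi)$.

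To obtain that lower bound I would use \Cref{thm:nonlocalmapping} at the Fourier level: the Galerkin coercivity $(-\mathcal{L}^S_\delta\bm{v},\bm{v})\geq C\|\bm{v}\|^2_{L^2}$ translates, via Plancherel, into a uniform pointwise lower bound on the Hermitian part of $M_\delta(\xi)$, valid for all $\delta\in(0,\delta_0]$. Here is where the assumption $\lambda\geq\mu$ enters: it ensures that the nonlocal gradient--divergence block contributes a positive semi-definite rank-one matrix at every frequency, so that $M_\delta(\xi)$ is controlled below by the bond-based symbol $\widehat{\mathcal{L}^B_\delta}(\xi)$ without any extra constraint on the kernel $\rho$. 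Combining this pointwise matrix lower bound with the Parseval identity and with the Riesz-basis equivalence $|r^h_\Omega\bm{u}|_h\simeq\|\bm{u}\|_{L^2(\R^\nd)}$ for $\bm{u}\in S(\square)$ then yields the stated estimate. The main obstacle is the matrix-valued coercivity step: one must propagate the Galerkin spectral bound uniformly across both the low-frequency regime (where $M_\delta(\xi)$ converges to the classical Lam\'{e} symbol $\mu|\xi|^2 I+(\lambda+\mu)\,\xi\otimes\xi$) and the high-frequency regime, which is precisely where the assumption $\lambda\geq\mu$ is needed to prevent the off-diagonal nonlocal contributions from destroying positivity.
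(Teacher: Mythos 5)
Your overall strategy (Fourier comparison with the Galerkin form, zero extension of the RK coefficients, and the role of $\lambda\geq\mu$ in keeping the gradient--divergence block positive semi-definite) has the same flavor as the paper's, but the central step of your plan fails. You propose to reduce the estimate to ``a uniform pointwise lower bound on $M_\delta(\xi)$'' and to obtain that bound by transferring the Galerkin coercivity of \cref{thm:nonlocalmapping} to the Fourier level via Plancherel. This transfer is not possible: the symbol $\bm{M}^S_{\delta}(\bm{\xi})$ vanishes at $\bm{\xi}=\bm{0}$ and behaves like $|\bm{\xi}|^2$ for small $\delta|\bm{\xi}|$ (see \cref{eqn:scalarP1,eqn:scalarQ1,eqn:scalarB1}, all of which vanish as $|\bm{\xi}|\to 0$), so no uniform bound $\bm{M}^S_{\delta}(\bm{\xi})\succeq C\bm{I}_{\nd}$ exists. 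The coercivity in \cref{thm:nonlocalmapping} is a nonlocal Poincar\'e-type inequality that depends on the volumetric constraint $\bm{u}=\bm{0}$ on $\Omega_{\mathcal{I}}$; it is a statement about the bilinear form on the constrained space $\mathcal{S}_{\delta}$ and cannot be localized in frequency. A uniform pointwise symbol bound would be equivalent to coercivity on all of $L^2(\R^{\nd})$, which is false since constants are annihilated by $\mathcal{L}^S_{\delta}$.

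The paper circumvents exactly this obstruction by never lower-bounding $\bm{M}^S_{\delta}$ pointwise. It first applies Cauchy--Schwarz, $|(\bm{u}_{\bm{k}})|_h\,|r^h_{\Omega}(-\mathcal{L}^S_{\delta}\bm{u})|_h\geq C\,|((\bm{u}_{\bm{k}}),r^h(-\mathcal{L}^S_{\delta}\bm{u}))_{l^2}|$, which reduces the problem to the collocation \emph{sesquilinear} form; the restriction $r^h_{\Omega}$ versus $r^h$ is harmless there because $(\bm{u}_{\bm{k}})$ is supported on grid points in $\Omega$. Then part \emph{(iii)} of \cref{lem:GCF} compares the two aliased symbols and shows $\bm{M}_C-C\bm{M}_G\succeq 0$ for every $\bm{\xi}\neq\bm{0}$ --- a comparison that only requires $\bm{M}^B_{\delta},\bm{M}^D_{\delta}\succeq 0$ (this is where $\lambda\geq\mu$ enters, at all frequencies, not just high ones) together with boundedness of the ratios of the sinc factors, and needs no quantitative lower bound on $\bm{M}^S_{\delta}$ itself. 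Only after this reduction to the Galerkin quadratic form is the function-level coercivity of \cref{thm:nonlocalmapping} invoked. Your direct attack on $|r^h_{\Omega}(-\mathcal{L}^S_{\delta}\bm{u})|_h^2$ also has a secondary difficulty you do not address: its Fourier representation involves $\bm{M}_C^{*}\bm{M}_C$-type aliasing expressions and the restriction to samples in $\Omega$ breaks translation invariance; the duality step is precisely what makes the frequency analysis tractable. To repair your argument you would need to replace the pointwise-symbol step with the $\bm{M}_C\succeq C\bm{M}_G$ comparison followed by the function-space coercivity.
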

We need some intermediate results before proving \cref{thm:stability}. We define a scalar product in $l^2(\mathbb{Z}^{\nd}; \mathbb{C}^{\nd})$,
\begin{equation} \notag
\begin{aligned}
((\bm{u_{k}}), (\bm{v_k}))_{l^2} &:= \sum_{\bm{k} \in \mathbb{Z}^{\nd}}u_{1,\bm{k}}\overline{v_{1,\bm{k}}}+ \ldots
%\sum_{\bm{k} \in \mathbb{Z}^{\nd}}u_{2,\bm{k}}\overline{v_{2,\bm{k}}}
+\sum_{\bm{k} \in \mathbb{Z}^{\nd}}u_{\nd,\bm{k}}\overline{v_{\nd,\bm{k}}} \\
&= \sum_{j=1}^{\nd}\sum_{\bm{k} \in \mathbb{Z}^{\nd}}u_{j,\bm{k}}\overline{v_{j,\bm{k}}} .
\end{aligned}
\end{equation}
The Fourier series of a vector-valued sequence $(\bm{u_k})$ is defined as 
\[
\bm{\widetilde{u}}(\bm{\xi}) = [\tilde{u}_1(\bm{\xi}), \ldots, \tilde{u}_{\nd}(\bm{\xi}) ]^T
\]
and the $j$-th component of $\bm{\widetilde{u}}(\bm{\xi})$ is  
\begin{equation} \notag
\tilde{u}_j(\bm{\xi}) := \sum_{\bm{k} \in \mathbb{Z}^{\nd}} e^{-i\bm{k} \cdot \bm{\xi}}u_{j, \bm{k}},
\end{equation}
where 
\begin{equation} \notag
u_{j,\bm{k}} = (2\pi)^{-\nd} \int_{\bm{Q}} e^{i\bm{k} \cdot \bm{\xi}}\tilde{u}_j(\bm{\xi})d{\bm{\xi}},
\end{equation}
for $\bm{Q}:=(-\pi, \pi)^{\nd}$.

We present the Fourier symbol of the peridynamic Navier operator $-\mathcal{L}^S_{\delta}$ in the next lemma. The Fourier transform of $\bm{u} \in S_{\delta}$ is defined by
\begin{equation} \notag 
\bm{\widehat{u}}(\bm{\xi}) := \int_{\mathbb{R}^{\nd}} e^{-i \bm{x} \cdot \bm{\xi}} \bm{u}(\bm{x}) d\bm{x}.
\end{equation} 
The proof of the following lemma can be found in \cref{pf:lemFPDsymbol}. 
\begin{lemma} \label{lem:FPDsymbol}
The Fourier symbol of the peridynamic Navier operator $\mathcal{L}^S_{\delta}$ is given by
\begin{equation} \label{eqn:FPDNoperator}
-\widehat{\mathcal{L}^S_{\delta} \bm{u}}(\bm{\xi}) = \bm{M}^S_{\delta}(\bm{\xi}) \widehat{\bm{u}} (\bm{\xi}),
\end{equation}
where the Fourier symbol $\bm{M}^S_{\delta}(\bm{\xi})$ is a $\nd \times \nd$ matrix and consists of two parts,
\begin{equation} \label{eqn:PDNsymbol}
\bm{M}^S_{\delta}(\bm{\xi}) = \bm{M}^B_{\delta}(\bm{\xi}) + \bm{M}^D_{\delta}(\bm{\xi}),
\end{equation}
where 
\begin{equation} 
\bm{M}^B_{\delta}(\bm{\xi}) = \frac{C_{\mu}}{\delta^2}  p_{1}(\delta|\bm{\xi}|)\left( \bm{I}_{\nd} - \vec{\bm{\xi}} \vec{\bm{\xi}}^{\,\,T} \right) 
+ \frac{C_{\mu}}{\delta^2}   q_{1}(\delta|\bm{\xi}|)   \vec{\bm{\xi}} \vec{\bm{\xi}}^{\,\,T} ,
\end{equation}
and 
\begin{equation} 
\bm{M}^D_{\delta}(\bm{\xi}) = \frac{C_{\lambda,\mu}}{\delta^2}  \left(b_{1}(\delta|\bm{\xi}|) \right)^2 \vec{\bm{\xi}} \vec{\bm{\xi}}^{\,\,T} ,
\end{equation}
%where \( \displaystyle C_{\mu} = \frac{30\mu}{\nd} \), \( \displaystyle C_{\lambda, \mu} =\frac{3(\lambda- \mu)}{\nd} \) 
where $\bm{I}_{\nd}$ is the $\nd$-dimensional identity matrix, \( \displaystyle \vec{\bm{\xi}} =\frac{\bm{\xi}}{|\bm{\xi}|} \) is the unit vector in the direction of $\bm{\xi}$, \( C_{\mu} = C_{\alpha}\mu/\nd \) and  \(  C_{\lambda, \mu} =C_{\beta}(\lambda- \mu) \) are material dependent constants, the scalars $p_1( |\bm{\xi}| ), q_1(|\bm{\xi}| )$ and $b_1( |\bm{\xi}| )$ are given by
\begin{equation} \label{eqn:scalarP1}
 p_1(|\bm{\xi}| ) = \int_{B_{1}} \rho( |\bm{s}|) \frac{s^2_1}{|\bm{s}|^2}(1-\textnormal{cos}(|\bm{\xi}|s_{\nd})) d\bm{s},  
\end{equation}
\begin{equation} \label{eqn:scalarQ1}
q_1(|\bm{\xi}| ) = \int_{B_{1}} \rho( |\bm{s}|) \frac{s^2_{\nd}}{|\bm{s}|^2}(1-\textnormal{cos}(|\bm{\xi}|s_{\nd})) d\bm{s}, 
\end{equation}
\begin{equation} \label{eqn:scalarB1} 
b_1(\bm{|\xi|}) = \int_{B_{1}} \rho(|\bm{s}|) s_{\nd} \, \textnormal{sin}( |\bm{\xi}|s_{\nd}) d\bm{s}.
\end{equation}
\end{lemma}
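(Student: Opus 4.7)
The plan is to compute the Fourier symbol contributed by each of the three convolution-type integral operators appearing in $\mathcal{L}^{S}_{\delta}$, namely $\mathcal{L}^{B}_{\delta}$, $\mathcal{D}_{\delta}$, and $\mathcal{G}_{\delta}$, and then to assemble them using the prefactors from \cref{eqn:PDoperator} together with the identity $m(\bm{x})\equiv \nd$ noted after \cref{eqn:WeightedVolume}.

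For the bond-based term, the change of variable $\bm{s}=\bm{y}-\bm{x}$ rewrites $\mathcal{L}^{B}_{\delta}$ as a convolution, so the Fourier transform yields $\widehat{\mathcal{L}^{B}_{\delta}\bm{u}}(\bm{\xi})=\bm{K}^{B}_{\delta}(\bm{\xi})\,\widehat{\bm{u}}(\bm{\xi})$ with
\[
\bm{K}^{B}_{\delta}(\bm{\xi})=\int_{B_\delta}\rho_\delta(|\bm{s}|)\,\frac{\bm{s}\bm{s}^{T}}{|\bm{s}|^{2}}\bigl(e^{i\bm{s}\cdot\bm{\xi}}-1\bigr)\,d\bm{s}.
\]
Because the matrix-valued density is even in $\bm{s}$, the imaginary part of the exponential drops out, leaving $-\int\rho_\delta\frac{\bm{s}\bm{s}^{T}}{|\bm{s}|^{2}}(1-\cos(\bm{s}\cdot\bm{\xi}))\,d\bm{s}$. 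To evaluate this matrix I would exploit the radial character of $\rho_\delta$: for any rotation $R$ with $R\vec{\bm{\xi}}=\bm{e}_\nd$, the substitution $\bm{s}\to R^{T}\bm{s}$ gives $\bm{K}^{B}_{\delta}(\bm{\xi})=R^{T}\bm{K}^{B}_{\delta}(|\bm{\xi}|\bm{e}_\nd)R$. A parity argument in every coordinate $s_i$, $i\neq \nd$, kills all off-diagonal entries of $\bm{K}^{B}_{\delta}(|\bm{\xi}|\bm{e}_\nd)$, and symmetry among the remaining indices leaves a single ``longitudinal'' diagonal entry at position $(\nd,\nd)$ and $\nd-1$ identical ``transverse'' entries. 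Rescaling $\bm{s}\to\delta\bm{s}$ via \cref{eqn:NonlocalKernelScaling} extracts the factor $\delta^{-2}$ and identifies these entries with $-\delta^{-2}q_{1}(\delta|\bm{\xi}|)$ and $-\delta^{-2}p_{1}(\delta|\bm{\xi}|)$, respectively. Rotating back via $R^{T}\bm{e}_\nd\bm{e}_\nd^{T}R=\vec{\bm{\xi}}\vec{\bm{\xi}}^{\,T}$ and multiplying by the prefactor $C_\alpha\mu/m=C_\mu$ recovers exactly $\bm{M}^{B}_{\delta}(\bm{\xi})$.

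For the composition $\mathcal{G}_{\delta}\mathcal{D}_{\delta}$ I would treat each factor separately. Both $\mathcal{D}_{\delta}$ and $\mathcal{G}_{\delta}$ are convolutions against the odd vector-valued density $\rho_\delta(|\bm{s}|)\bm{s}$, so only the imaginary part of $e^{i\bm{s}\cdot\bm{\xi}}$ survives and the two symbols become $i\bm{b}(\bm{\xi})^{T}$ and $i\bm{b}(\bm{\xi})$ with
\[
\bm{b}(\bm{\xi}):=\int_{B_\delta}\rho_\delta(|\bm{s}|)\,\bm{s}\sin(\bm{s}\cdot\bm{\xi})\,d\bm{s};
\]
their product contributes $-\bm{b}(\bm{\xi})\bm{b}(\bm{\xi})^{T}$. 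The same rotation argument gives $\bm{b}(\bm{\xi})=R^{T}\bm{b}(|\bm{\xi}|\bm{e}_\nd)$, and odd parity in the transverse coordinates annihilates every component of $\bm{b}(|\bm{\xi}|\bm{e}_\nd)$ except the $\nd$-th, which after the $\delta$-rescaling equals $\delta^{-1}b_{1}(\delta|\bm{\xi}|)$. Hence $\bm{b}(\bm{\xi})\bm{b}(\bm{\xi})^{T}=\delta^{-2}(b_{1}(\delta|\bm{\xi}|))^{2}\,\vec{\bm{\xi}}\vec{\bm{\xi}}^{\,T}$, and multiplying by the prefactor $C_\beta\nd(\lambda-\mu)/m^{2}$ delivers $\bm{M}^{D}_{\delta}(\bm{\xi})$. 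Summing $\bm{M}^{B}_{\delta}$ and $\bm{M}^{D}_{\delta}$ yields \cref{eqn:PDNsymbol}.

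The main hurdle is purely organizational: keeping the rotation substitution and the $\delta$-rescaling consistent across all three integrals so that the one-dimensional scalar profiles $p_{1},q_{1},b_{1}$ emerge in precisely the normalized form stated in \cref{eqn:scalarP1,eqn:scalarQ1,eqn:scalarB1}. Beyond the radial symmetry of $\rho$ and its compact support in $B_{1}$, no deep estimates are required; every step reduces to routine parity and change-of-variable calculations.
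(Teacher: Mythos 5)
Your proposal is correct and follows essentially the same route as the paper: the paper's Appendix A first derives the symbols of $\mathcal{L}^B_{\delta}$, $\mathcal{D}_{\delta}$, and $\mathcal{G}_{\delta}$ separately (Lemma A.1), using the convolution structure, the evenness/oddness of the integrands, and a rotation sending $\vec{\bm{\xi}}$ to $\bm{e}_{\nd}$ to reduce everything to the scalar profiles, and then rescales by $\delta$ via \cref{eqn:NonlocalKernelScaling} and assembles with the prefactors of \cref{eqn:PDoperator} and $m(\bm{x})=\nd$. The only cosmetic difference is that the paper writes out the rotation matrix $\bm{\mathcal{R}}$ explicitly and computes the conjugated matrix entrywise, whereas you dispatch the off-diagonal entries by a parity argument; the substance is identical.
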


From \cref{eqn:PDNsymbol}, if $C_{\lambda, \mu} \geq 0$ we can immediately see that the Fourier symbol $\bm{M}^S_{\delta}(\bm{\xi})$ is positive definite.
\begin{lemma} \label{lem:PositiveDefinite}
Assume $\lambda \geq \mu $, the Fourier symbol $\bm{M}^S_{\delta}(\bm{\xi})$ is positive definite for any $\bm{\xi} \neq \bm{0}$. 
\end{lemma}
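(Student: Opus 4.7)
The plan is to exploit the fact that the Fourier symbol decomposes along the two mutually orthogonal subspaces $\operatorname{span}(\vec{\bm\xi})$ and its orthogonal complement, and that on each subspace the coefficient is strictly positive under the stated hypothesis.

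First I would observe that $P_\parallel := \vec{\bm\xi}\vec{\bm\xi}^{\,T}$ is the orthogonal projector onto $\operatorname{span}(\vec{\bm\xi})$, and $P_\perp := \bm{I}_\nd - \vec{\bm\xi}\vec{\bm\xi}^{\,T}$ is the orthogonal projector onto its complement; in particular both are symmetric positive semidefinite with $P_\parallel + P_\perp = \bm{I}_\nd$ and $P_\parallel P_\perp = 0$. Using \cref{eqn:PDNsymbol}, for any $\bm{v}\in\mathbb{R}^\nd$ the quadratic form is then
\begin{equation}\notag
\bm{v}^T \bm{M}^S_\delta(\bm\xi)\,\bm{v}
\;=\;
\frac{C_\mu}{\delta^2}\,p_1(\delta|\bm\xi|)\,|P_\perp\bm{v}|^2
\;+\;
\left(\frac{C_\mu}{\delta^2}\,q_1(\delta|\bm\xi|)+\frac{C_{\lambda,\mu}}{\delta^2}\bigl(b_1(\delta|\bm\xi|)\bigr)^2\right)|P_\parallel\bm{v}|^2 .
\end{equation}
Thus positive definiteness reduces to showing that both coefficients in front of $|P_\perp\bm v|^2$ and $|P_\parallel\bm v|^2$ are strictly positive whenever $\bm\xi\neq\bm 0$, because $|P_\perp\bm v|^2+|P_\parallel\bm v|^2=|\bm v|^2$.

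Next I would argue that $p_1(\delta|\bm\xi|)>0$ and $q_1(\delta|\bm\xi|)>0$ for $\bm\xi\neq\bm0$. From \cref{eqn:scalarP1} and \cref{eqn:scalarQ1}, each integrand is a product of the nonnegative factors $\rho(|\bm s|)$, $s_i^2/|\bm s|^2$, and $1-\cos(|\bm\xi|s_\nd)$. Since $\rho$ is nonnegative with support covering a neighborhood of the origin in $B_1$, and the zero set of $1-\cos(|\bm\xi|s_\nd)$ together with the coordinate hyperplane $\{s_i=0\}$ has Lebesgue measure zero in $B_1$, the integrands are strictly positive on a set of positive measure, yielding $p_1,q_1>0$. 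Then $C_\mu=C_\alpha\mu/\nd>0$ since $\mu>0$, so the coefficient of $|P_\perp\bm v|^2$ is strictly positive. Under the hypothesis $\lambda\geq\mu$ we have $C_{\lambda,\mu}=C_\beta(\lambda-\mu)\geq 0$, so $C_{\lambda,\mu}b_1^2\geq 0$, and the coefficient of $|P_\parallel\bm v|^2$ is bounded below by $C_\mu q_1/\delta^2>0$.

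Combining, $\bm v^T\bm{M}^S_\delta(\bm\xi)\bm v>0$ for every $\bm v\neq\bm 0$ whenever $\bm\xi\neq\bm 0$, giving the claim. The only conceptual subtlety is the role of the hypothesis $\lambda\geq\mu$: if this were violated then $C_{\lambda,\mu}<0$ and the $b_1^2$ term would compete against $C_\mu q_1$, and one would need a comparison of the form $C_\mu q_1\geq |C_{\lambda,\mu}|b_1^2$ for all $\bm\xi$, which would be the main obstacle. Assuming $\lambda\geq\mu$ eliminates that competition entirely, so the proof reduces to the short direct computation above.
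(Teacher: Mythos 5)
Your proposal is correct and follows essentially the same route as the paper: both decompose $\bm{M}^S_{\delta}(\bm{\xi})$ along $\vec{\bm{\xi}}\vec{\bm{\xi}}^{\,T}$ and its orthogonal complement, use $p_1(\delta|\bm{\xi}|), q_1(\delta|\bm{\xi}|)>0$ together with $C_\mu>0$, and invoke $\lambda\geq\mu$ only to guarantee $C_{\lambda,\mu}\bigl(b_1(\delta|\bm{\xi}|)\bigr)^2\geq 0$. Your projector formulation and the positive-measure justification of $p_1,q_1>0$ are slightly more explicit than the paper's, but the argument is the same.
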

\begin{proof}
By observation, $\bm{M}^S_{\delta} (\bm{\xi}) $ is a real matrix. Moreover, from \cref{eqn:scalarP1,eqn:scalarQ1} we know that 
\[ p_{1}(\delta|\bm{\xi}|), q_{1}(\delta|\bm{\xi}|) > 0, \quad \textnormal{ for } \delta |\bm{\xi}| \neq 0. \]
Without loss of generality, we let $\bm{v}$ be a unit vector so  $|\bm{v}^T \vec{\bm{\xi}} \, | \leq |\bm{v}|$ because $|\vec{\bm{\xi}} \, |=1$. Then, we have
\begin{align*}
\delta^2 \bm{v}^T \bm{M}^S_{\delta}(\bm{\xi}) \bm{v} &\geq C_{\mu} p_{1}(\delta|\bm{\xi}|)\bm{v}^T \left(\bm{I}_{\nd}-\vec{\bm{\xi}}\vec{\bm{\xi}}^{\,\, T} \right) \bm{v} \\
& \quad + \left[ C_{\mu} q_{1}(\delta|\bm{\xi}|) + C_{\lambda, \mu} \left(b_{1}(\delta|\bm{\xi}|) \right)^2 \right]\bm{v}^T \vec{\bm{\xi}} \vec{\bm{\xi}}^{\,\, T} \bm{v}, \\
&= C_{\mu} p_{1}(\delta|\bm{\xi}|) \left(|\bm{v}|^2 - |\bm{v}^T \vec{\bm{\xi}} \, |^2 \right) \\
&\quad + \left[ C_{\mu} q_{1}(\delta|\bm{\xi}|)
+ C_{\lambda, \mu} \left(b_{1}(\delta|\bm{\xi}|) \right)^2 \right] |\bm{v}^T \vec{\bm{\xi}} \, |^2, \\
& > 0,
\end{align*}
where we have used the assumption that $C_{\lambda, \mu} = \lambda - \mu \geq 0.$
\end{proof}

\begin{remark}
In order to show the positive definiteness of $\bm{M}^S_{\delta}$ for more general $\lambda$ and $\mu$, we need more details on the nonlocal kernel ($\rho(|\bm{s}|)$) which is beyond the scope of this paper and we assume $\lambda \geq \mu$ to avoid such discussion. For materials that satisfy such constraint, their Poisson ratio $\nu$ have to be in $ [0.25, 0.5)$. However, the well-posedness of \cref{eqn:NonlocalEqn} proved in \cite{mengesha2014nonlocal} infers that $\bm{M}^S_{\delta}$ is positive definite even without this assumption.
\end{remark}

The peridynamic Navier operator $\mathcal{L}^S_{\delta}$ defines two discrete sesquilinear forms:
\begin{equation} \label{eqn:IGalerkin}
\left(i^h(\bm{u_{k}}), -\mathcal{L}^S_{\delta}i^h(\bm{v_{k}})\right) = \sum_{j, \, j'=1}^{\nd} \sum_{\bm{k}, \, \bm{k'} \in \mathbb{Z}^{\nd}} u_{j,\, \bm{k}} \left((\Psi_{\bm{k}}), -\mathcal{L}^S_{\delta}(\Psi_{\bm{k'}}) \right)\overline{v_{j',\,  \bm{k'}}} \, ,
\end{equation}
and
\begin{equation} \label{eqn:ICollocation}
\left((\bm{u_{k}}), -r^h\mathcal{L}^S_{\delta}i^h(\bm{v_{k}})\right)_{l^2} =\prod_{j=1}^{\nd}h_j \sum_{j, \, j'=1}^{\nd} \sum_{ \bm{k}, \bm{k'} \in \mathbb{Z}^{\nd}}  u_{j,\, \bm{k}}  \left(-\mathcal{L}^S_{\delta}(\Psi_{\bm{k'}})\right)(\bm{x_{k}})\overline{v_{j', \, \bm{k'}}} \, .
\end{equation}
\Cref{eqn:IGalerkin} defines a quadratic form corresponding to the Galerkin method, meanwhile, \cref{eqn:ICollocation} corresponds to the collocation method. The two quadratic forms \cref{eqn:IGalerkin,eqn:ICollocation} are compared as follows. The proof is similar to \cite[Lemma 4.2]{Leng2019b} so we provide it in \cref{pf:lemGCF}. 
%and 
%\[   u_{k,\underline{\bm{k}}} = [(u_{1,\bm{i}})_{\bm{i} \in \Z^3}, (u_{2,\bm{j}})_{\bm{j} \in \Z^3}, (u_{3,\bm{k}})_{\bm{k} \in \Z^3}]^T    \]

%The Fourier representation of equations (\ref{IGalerkin}) and (\ref{ICollocation}) are shown in the following lemma.
\begin{lemma} \label{lem:GCF}
Let $\bm{\widetilde{u}}(\bm{\xi})$ and $\bm{\widetilde{v}}(\bm{\xi})$ be the Fourier series of the sequences $(\bm{u_{k}}), (\bm{v_{k}}) \in l^1(\mathbb{Z}^{\nd};\mathbb{C}^{\nd})$ respectively and  the RK interpolation of two sequences are expressed as \(i^h(\bm{u}_{\bm{k}}) \allowbreak = [i^h(u_{1,\bm{k}}), \ldots, i^h(u_{\nd,\bm{k}})]^T \) and \( \displaystyle i^h(\bm{v}_{\bm{k}}) = [ i^h(v_{1,\bm{k}}), \allowbreak \ldots, i^h(v_{\nd,\bm{k}}) ]^T \). 
Then
\begin{enumerate}[label=(\roman*)]
\item \(\displaystyle \left(i^h(\bm{u_{k}}), -\mathcal{L}^S_{\delta} i^h(\bm{v_{k}})\right) = \left(2\pi \right)^{-\nd}  \mathlarger{\int}_{\bm{Q}} \bm{\widetilde{u}}(\bm{\xi}) \cdot \bm{M}_G(\delta, \bm{h}, \bm{\xi}) \overline{\bm{\widetilde{v}}(\bm{\xi})}  d\bm{\xi} \), \\[.2pt] \label{FGalerkin}
\item \(\displaystyle \left((\bm{u_k}), -r^h\mathcal{L}^S_{\delta}i^h(\bm{v_{k}})\right)_{l^2} = \left(2\pi \right)^{-\nd}  \mathlarger{\int}_{\bm{Q}} \bm{\widetilde{u}}(\bm{\xi}) \cdot \bm{M}_C(\delta, \bm{h}, \bm{\xi})  \overline{\widetilde{\bm{v}}(\bm{\xi})} d\bm{\xi} \), \\[.1pt] \label{FCollocation}
\item There exists a constant $C > 0$ independent of $\delta, \bm{h}$ and $\bm{\xi}$ such that $\bm{M}_C(\delta, \bm{h}, \bm{\xi}) - C \bm{M}_G(\delta, \bm{h}, \bm{\xi})$ is positive definite for any $\bm{\xi} \neq \bm{0}$, \label{FGCEquivalent}
\end{enumerate}
where $\bm{M}_G$ and $\bm{M}_C$ are defined as
\begin{equation} \label{eqn:lambdaG}
\begin{aligned}
\bm{M}_G(\delta, \bm{h}, \bm{\xi}) & =  2^{8\nd}\sum_{\bm{r} \in \mathbb{Z}^{\nd}} \bm{M}^B_{\delta} \left( (\bm{\xi} + 2 \pi \bm{r})\oslash\bm{h} \right) \prod_{j=1}^{\nd}h_j\left(\frac{\sin(\xi_j/2)}{\xi_j+  2\pi r_j}\right)^8    \\ 
& \quad + 2^{8\nd+4}\sum_{\bm{r} \in \mathbb{Z}^{\nd}} \bm{M}^D_{\delta} \left( (\bm{\xi} + 2 \pi \bm{r})\oslash\bm{h} \right) \prod_{j=1}^{\nd}h_j\left(\frac{\sin(\xi_j/2)}{\xi_j+  2\pi r_j}\right)^{12}  , 
\end{aligned}
\end{equation}
\begin{equation} \label{eqn:lambdaC}
\begin{aligned}
\bm{M}_C(\delta, \bm{h}, \bm{\xi}) & =  2^{4\nd}\sum_{\bm{r} \in \mathbb{Z}^{\nd}} \bm{M}^B_{\delta}\left((\bm{\xi} + 2 \pi \bm{r}) \oslash \bm{h} \right)  \prod_{j=1}^{\nd}h_j\left(\frac{\sin(\xi_j/2)}{\xi_j+ 2 \pi r_j}\right)^4 \\
& \quad + 2^{4\nd+4}\sum_{\bm{r} \in \mathbb{Z}^{\nd}} \bm{M}^D_{\delta}\left((\bm{\xi} + 2 \pi \bm{r}) \oslash \bm{h} \right)  \prod_{j=1}^{\nd}h_j\left(\frac{\sin(\xi_j/2)}{\xi_j+ 2 \pi r_j}\right)^8.
\end{aligned}
\end{equation}
\end{lemma}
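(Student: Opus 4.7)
The plan is to treat (i) and (ii) as direct Fourier computations and to derive (iii) from a termwise positive semidefinite comparison of the two Fourier-side matrices. The setup starts with the Fourier transform of $\Psi_{\bm{k}}$: because $\Psi_{\bm{k}}$ is a tensor product of shifted, rescaled cubic B-splines and $\phi$ has Fourier transform proportional to $(\sin(\xi/2)/(\xi/2))^4$, $\widehat{\Psi_{\bm{k}}}(\bm{\xi})$ factors as $e^{-i\bm{x_k}\cdot\bm{\xi}}$ times a product over $j$ of $h_j\,(\sin(h_j\xi_j)/(h_j\xi_j))^4$ up to explicit tracked constants. This is the building block for both (i) and (ii).

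For (i), I apply Parseval's identity to the $L^2$ pairing and substitute the Fourier symbol from \cref{lem:FPDsymbol} for $-\mathcal{L}^S_\delta$, using the identity relating the Fourier transform of $i^h(\bm{u_k})$ to the Fourier series of $(\bm{u_k})$ multiplied by the B-spline factor. Partitioning the resulting integral over $\mathbb{R}^{\nd}$ into the cells $\bm{Q}+2\pi\bm{r}$ and changing variables produces the series in \cref{eqn:lambdaG}. The asymmetric sine powers $8$ and $12$ arise because the $\bm{M}^B$ piece carries two copies of $\widehat{\Psi}$ (one per basis function), whereas the $\bm{M}^D$ piece, coming from the composed term $\mathcal{G}_\delta\Pi^h\mathcal{D}_\delta$, carries one additional $\Pi^h$ and hence three copies of $(\sin/\cdot)^4$. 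For (ii), only one RK interpolation appears (on the $\bm{v}$-side) and the $\bm{u}$-side is a pure grid evaluation, so applying Poisson summation to the sampled sequence $(-\mathcal{L}^S_\delta i^h(\bm{v_k}))(\bm{x_k})$ produces the sum over $\bm{r}$ with only $4=1\times 4$ sine powers for $\bm{M}^B$ and $8=2\times 4$ for $\bm{M}^D$; the prefactor $\prod_j h_j$ reflects the cell-volume weighting of the discrete $l^2$ inner product. These steps parallel those of \cite[Lemma 4.2]{Leng2019b}, and the matrix-valuedness of $\bm{M}^S_\delta$ causes no extra difficulty because the sinc weights are scalars multiplying the matrix symbol.

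For (iii), I compare $\bm{M}_C$ and $\bm{M}_G$ termwise in $\bm{r}$. By \cref{lem:PositiveDefinite} (where the assumption $\lambda\geq\mu$ enters), both $\bm{M}^B_\delta(\bm{\eta})$ and $\bm{M}^D_\delta(\bm{\eta})$ are positive semidefinite for every $\bm{\eta}$, so each summand of \cref{eqn:lambdaG} and \cref{eqn:lambdaC} is positive semidefinite. It therefore suffices to find a uniform $C>0$ such that, for each $\bm{r}$ and $\bm{\xi}\in\bm{Q}$, the scalar weight of the $\bm{r}$-th collocation term is at least $C$ times the weight of the corresponding Galerkin term; this reduces to a bound of the form
\[
2^{4\nd}\prod_{j=1}^{\nd}\left(\frac{\sin(\xi_j/2)}{\xi_j+2\pi r_j}\right)^{4}\leq \frac{1}{C}.
\]
For each coordinate, $|\sin(\xi_j/2)/(\xi_j+2\pi r_j)|$ is uniformly bounded: by $1/2$ when $r_j=0$ (using $|\sin(x)/x|\leq 1$) and by $1/\pi$ when $r_j\neq 0$ (using $|\xi_j+2\pi r_j|\geq \pi$ and $|\sin(\xi_j/2)|\leq 1$); the product is therefore uniformly bounded in $\bm{\xi}$ and $\bm{r}$, and quasi-uniformity of $\square$ (equation \cref{eqn:GridVector}) keeps $C$ independent of $\bm{h}$.

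The main obstacle is precisely the matrix-valued comparison in (iii). The subtlety is that the rank-one projector $\vec{\bm{\eta}}\vec{\bm{\eta}}^{\,T}$ inside $\bm{M}^B_\delta$ and $\bm{M}^D_\delta$ depends on the direction of $\bm{\eta}=(\bm{\xi}+2\pi\bm{r})\oslash\bm{h}$, so one cannot compare the aggregated matrices directly; the comparison must be performed summand-by-summand in $\bm{r}$, and this is where the assumption $\lambda\geq\mu$ is essential—it guarantees, via \cref{lem:PositiveDefinite}, the positive semidefiniteness of every individual summand, without which a termwise sufficient condition would fail to yield positive definiteness of the aggregate $\bm{M}_C - C\bm{M}_G$.
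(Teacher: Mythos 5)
Your proposal is correct and follows essentially the same route as the paper: Parseval plus folding of the $\mathbb{R}^{\nd}$ integral into the cells $\bm{Q}+2\pi\bm{r}$ for (i) and (ii) (with the extra $\Pi^h$ in the $\mathcal{G}_{\delta}\Pi^h\mathcal{D}_{\delta}$ term accounting for the higher sine powers on the $\bm{M}^D_{\delta}$ part), and for (iii) a summand-by-summand scalar comparison of the sinc weights combined with the positive (semi)definiteness of $\bm{M}^B_{\delta}$ and $\bm{M}^D_{\delta}$ under $\lambda\geq\mu$, exactly as in the paper's appeal to \cref{lem:PositiveDefinite}. Your write-up of (iii) is in fact more explicit than the paper's rather terse version.
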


Finally, we are ready to prove \cref{thm:stability} using \cref{lem:GCF}.
\begin{proof}[Proof of \cref{thm:stability}]
\begin{comment}
For any $\bm{u} \in L_c^2(\Omega_{\delta})$, $\bm{u}$ is compactly supported in $\Omega $. We can define an extension operator $E$, extend by zero from $\Omega_{\delta}$  to $\widehat{\Omega}$, 
\begin{equation} \notag 
E : L_c^2(\Omega_{\delta}) \rightarrow \tilde{L}^2(\widehat{\Omega}),
\end{equation}
and $\widehat{\Omega}=[-1, 2] \times [-1, 2] \times [-1, 2]$. $E$ is continuous and linear and $\textnormal{supp}(\Pi ^h E \bm{u}) \subset \widehat{\Omega}$ for all $\bm{u} \in L_c^2(\Omega_{\delta})$.  Moreover, there exists a constant $C > 0$ independent of $h$ such that 
\begin{equation} \label{extension}
\| \Pi^h E p_{\Omega} \bm{u} \|_{\tilde{L}^2(\widehat{\Omega})} \leq C \| p_{\Omega} \bm{u} \|_{L^2(\Omega_{\delta})}, 
\end{equation}
holds for all $\bm{u} \in (S(\square))^3 $.
\end{comment}

For $\bm{u}=i^h(\bm{u_k})  \in S(\square \cap \Omega; \, \mathbb{R}^{\nd} )$, we have 
\begin{align*}
| (\bm{u}_{\bm{k}})|_h \cdot |r^h_{\Omega} (-\mathcal{L}^S_{\delta}  \bm{u})|_h & \geq C |( (\bm{u}_{\bm{k}}),  r^h_{\Omega} (-\mathcal{L}^S_{\delta} \bm{u}))_{l^2} |,  \\
   &= C |((\bm{u}_{\bm{k}}),  r^h (-\mathcal{L}^S_{\delta} i^h(\bm{u_k})))_{l^2} |,  \\
   &\geq C |(i^h(\bm{u_k}), (-\mathcal{L}^S_{\delta} i^h(\bm{u_k}))) |,  \\
   &\geq C \| \bm{u}\|^2_{L^2(\mathbb{R}^{\nd}; \, \mathbb{R}^{\nd})}. 
\end{align*}
The first line comes from the Cauchy-Schwartz inequality and the third line is an adaption of \cite[lemma 4.4]{Leng2019b}. 
\end{proof}

\subsection{Consistency analysis of the RK collocation}
 In this subsection, we first show the consistency of the RK collocation method and then present the convergence result using the stability (\cref{subsec:stability}) and consistency analysis. The RK collocation scheme converges to the nonlocal solution as grid size $h_{\max}$ goes to zero with a fixed $\delta$ and to the corresponding local limit as $\delta$ and $h_{\max}$ both vanish. The major ingredient for proving the asymptotic compatibility is the synchronized convergence property of the RK approximation and it is a well established result when the RK support size is selected carefully. Therefore we skip the proof and present the result in the following lemma, and we refer the readers to \cite{chen2017meshfree,Leng2019b, Li1996, Li1998synchronized} for more details. For the rest of the paper, we adopt the following notations for a vector-valued function 
$\bm{u}\in C^n(\R^\nd; \R^\nd)$,
\begin{equation} \notag 
\begin{aligned}
|\bm{u}|_\infty &= \sup_{1 \leq j \leq \nd} \sup_{\bm{x}\in \R^\nd} |u_j(\bm{x})|,  \textnormal{ and }\\
|\bm{u}^{(l)}|_{\infty} & = \sup_{1 \leq j \leq \nd} \sup_{|\bm{\beta}|=l}\sup_{\bm{y} \in \mathbb{R}^{\nd}}|D^{\bm{\beta}} u_j(\bm{y})|, \quad 1\leq l \leq n. 
\end{aligned}
\end{equation}

\begin{lemma} \label{lem:synchronizedconvergence}
\textbf{(Synchronized Convergence)}
Assume a scalar valued function $u \in C^4(\mathbb{R}^{\nd}) $ and  $\Pi^h u$  is the RK interpolation with
the shape function given by \cref{eqn:RKShape}. $\Pi^h u$ has synchronized convergence, namely 
\begin{equation} \notag 
\left|D^{\bm{\alpha}}(\Pi^h u - u )  \right|_\infty \leq C |u^{(|\bm{\alpha}| + 2)} |_{\infty} h_{\textnormal{max}}^2, \quad \textnormal{for } |\bm{\alpha}| = 0, 1, 2,
\end{equation}
where $C$ is a generic constant independent of $h_{\textnormal{max}}$\,.
\end{lemma}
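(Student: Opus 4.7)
The plan is to prove the bound pointwise at each $\bm{x}_0 \in \mathbb{R}^{\nd}$ by subtracting off a Taylor polynomial of carefully chosen degree and exploiting the classical moment identities of the cubic B-spline basis. For $|\bm{\alpha}|\in\{0,1,2\}$, let $P_{\bm{x}_0}$ denote the Taylor polynomial of $u$ at $\bm{x}_0$ of degree $|\bm{\alpha}|+1$, and decompose
\[
D^{\bm{\alpha}}(\Pi^h u - u)(\bm{x}_0) = D^{\bm{\alpha}}\Pi^h(u-P_{\bm{x}_0})(\bm{x}_0) + D^{\bm{\alpha}}(\Pi^h P_{\bm{x}_0} - P_{\bm{x}_0})(\bm{x}_0) + D^{\bm{\alpha}}(P_{\bm{x}_0}-u)(\bm{x}_0).
\]
The third term vanishes because $P_{\bm{x}_0}$ matches $u$ through order $|\bm{\alpha}|+1$ at $\bm{x}_0$. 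For the second term, I would prove an exact polynomial-cancellation identity for the cubic B-spline basis: for every polynomial $P$ of degree at most three,
\[
\Pi^h P(\bm{x}) - P(\bm{x}) = \tfrac{1}{6}\sum_{j=1}^{\nd} h_j^2\, \partial_{x_j}^2 P(\bm{x}).
\]
Since $P_{\bm{x}_0}$ has degree $|\bm{\alpha}|+1\le 3$, the right-hand side is a polynomial in $\bm{x}$ of degree strictly less than $|\bm{\alpha}|$, and its $|\bm{\alpha}|$-th derivative vanishes identically; hence the second term is zero.

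The exact formula above is a direct consequence of four moment identities of the tensor-product cubic B-spline basis: partition of unity $\sum_{\bm{k}}\Psi_{\bm{k}}(\bm{x})=1$; the linear reproducing condition \cref{eqn:LinearRC}, which yields $\sum_{\bm{k}}\Psi_{\bm{k}}(\bm{x})(\bm{x}_{\bm{k}}-\bm{x})=\bm{0}$; the constant second moment $\sum_k \phi(|x-kh|/(2h))(kh-x)^2 = h^2/3$; and the vanishing third moment $\sum_k \phi(|x-kh|/(2h))(kh-x)^3 \equiv 0$. The last two follow from a direct Strang--Fix-type computation on the specific spline of \cref{eqn:CubicSpline}; see \cite{Li1996,Li1998synchronized,chen2017meshfree}. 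Expanding any degree-three polynomial $P(\bm{x}_{\bm{k}})$ about $\bm{x}$ in powers of $\bm{x}_{\bm{k}}-\bm{x}$ and applying the four identities to the tensor-product moments $\sum_{\bm{k}} \Psi_{\bm{k}}(\bm{x})(\bm{x}_{\bm{k}}-\bm{x})^{\bm{\beta}} = \prod_j m_{\beta_j}(x_j;h_j)$ then yields the exact formula.

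With the second and third terms zero, one is left with $D^{\bm{\alpha}}(\Pi^h u - u)(\bm{x}_0) = D^{\bm{\alpha}}\Pi^h(u-P_{\bm{x}_0})(\bm{x}_0)$. The compact support of $\Psi_{\bm{k}}$ restricts the sum to $O(1)$ indices with $|\bm{x}_{\bm{k}}-\bm{x}_0|\le 2h_{\max}$; the chain rule applied to $\phi$ gives $|D^{\bm{\alpha}}\Psi_{\bm{k}}(\bm{x}_0)|\le Ch_{\max}^{-|\bm{\alpha}|}$; and the standard Taylor remainder estimate gives $|(u-P_{\bm{x}_0})(\bm{x}_{\bm{k}})|\le Ch_{\max}^{|\bm{\alpha}|+2}|u^{(|\bm{\alpha}|+2)}|_\infty$. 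Combining these bounds delivers $|D^{\bm{\alpha}}(\Pi^h u - u)|_\infty \le Ch_{\max}^2 |u^{(|\bm{\alpha}|+2)}|_\infty$. The main obstacle is establishing the exact polynomial-cancellation formula above; the vanishing of the third moment, rather than the linear reproducing property alone, is precisely what upgrades the naive $O(h_{\max}^{3-|\bm{\alpha}|})$ rate to the synchronized $O(h_{\max}^2)$ rate that is the content of the lemma.
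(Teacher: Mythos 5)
Your proposal is correct, and it is worth noting that the paper does not prove this lemma at all: it is stated as a known result with the proof deferred to the synchronized-convergence literature (Li--Liu and related works), which typically obtains these estimates through Strang--Fix/Fourier arguments on the reproducing conditions. Your pointwise Taylor-plus-moment argument is an elementary and self-contained rendering of the same mechanism. The decisive ingredients all check out for the basis \cref{eqn:RKShape}: with support size $a_0=2$ the shape function is exactly the integer-translate cubic B-spline $\Psi_{\bm k}(\bm x)=\prod_j B_3((x_j-k_jh_j)/h_j)$, whose Fourier transform $(\sin(\xi/2)/(\xi/2))^4$ has fourth-order zeros at $2\pi\Z\setminus\{0\}$; by Poisson summation this is equivalent to your four moment identities (partition of unity, vanishing first moment, constant second moment $h^2/3$, vanishing third moment), and a direct evaluation at, e.g., $x=0,\ h/4,\ h/2$ confirms them. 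The exact cancellation formula $\Pi^hP-P=\tfrac16\sum_jh_j^2\partial_{x_j}^2P$ for $\deg P\le 3$ then follows because the only nonvanishing tensor moments $\prod_jm_{\beta_j}$ with $|\bm\beta|\le3$ are $\bm\beta=\bm 0$ and $\bm\beta=2\bm e_j$, and you correctly identify that it is the vanishing third moment, not linear reproduction alone, that delivers the synchronized $O(h_{\max}^2)$ rate for $|\bm\alpha|=1,2$. Two minor points you should make explicit: the derivative bound is really $|D^{\bm\alpha}\Psi_{\bm k}|\le C\prod_jh_j^{-\alpha_j}\le Ch_{\min}^{-|\bm\alpha|}$, which becomes $Ch_{\max}^{-|\bm\alpha|}$ only via the quasi-uniformity assumption \cref{eqn:GridVector}; and the term-by-term differentiation of $\Pi^h$ and the $|\bm\alpha|=2$ case rely on $B_3\in C^2$, which holds for the cubic spline \cref{eqn:CubicSpline}. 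Neither affects the validity of the argument.
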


Next, we study the truncation error of the RK collocation method on the peridynamic Navier operator. 

\begin{lemma} \label{lem:consistency}
\textbf{(Uniform consistency)} Assume $\bm{u} \in C^4(\mathbb{R}^{\nd};\mathbb{R}^{\nd})$, then
\begin{equation} \notag 
| r^h\mathcal{L}^S_{\delta} \Pi^h{\bm{u}} - r^h \mathcal{L}^S_{\delta} \bm{u} |_{h} \leq C h_{\max}^2 |\bm{u}^{(4)}|_{\infty},
\end{equation}
where $C$ is independent of $h_{\max}$ and $\delta$.  
\end{lemma}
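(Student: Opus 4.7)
The plan is to prove the pointwise bound $|\mathcal{L}^S_\delta \Pi^h\bm{u}(\bm{x}) - \mathcal{L}^S_\delta \bm{u}(\bm{x})| \leq C h_{\max}^2|\bm{u}^{(4)}|_\infty$ uniformly in $\delta \in (0, \delta_0]$, from which the discrete-norm estimate follows by $L^2$-boundedness of $i^h$ on the bounded region $\Omega$. Since $m \equiv \nd$, with $e := \Pi^h\bm{u} - \bm{u}$ and the further decomposition $\mathcal{G}_\delta\Pi^h\mathcal{D}_\delta e = \mathcal{G}_\delta\mathcal{D}_\delta e + \mathcal{G}_\delta(\Pi^h-I)\mathcal{D}_\delta e$, I split
\begin{equation*}
\mathcal{L}^S_\delta\Pi^h\bm{u} - \mathcal{L}^S_\delta\bm{u} = \tfrac{C_\alpha\mu}{m}\mathcal{L}^B_\delta e + \tfrac{C_\beta\nd(\lambda-\mu)}{m^2}\bigl[\mathcal{G}_\delta\mathcal{D}_\delta e + \mathcal{G}_\delta(\Pi^h-I)\mathcal{D}_\delta e + \mathcal{G}_\delta(\Pi^h-I)\mathcal{D}_\delta\bm{u}\bigr]
\end{equation*}
and estimate the four pieces separately.

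For the two \emph{directly nonlocal} pieces $\mathcal{L}^B_\delta e$ and $\mathcal{G}_\delta\mathcal{D}_\delta e$, I would exploit the evenness of $\rho_\delta$ to symmetrize. The bond-based term becomes $\tfrac{1}{2}\int\rho_\delta\,(\bm{s}\otimes\bm{s}/|\bm{s}|^2)[e(\bm{x}+\bm{s}) + e(\bm{x}-\bm{s}) - 2e(\bm{x})]\,d\bm{s}$, bounded by $C|D^2 e|_\infty \int\rho_\delta|\bm{s}|^2\,d\bm{s} \leq Ch_{\max}^2|\bm{u}^{(4)}|_\infty$ via \Cref{lem:synchronizedconvergence} and $\int\rho_\delta|\bm{s}|^2\,d\bm{s} = \nd$. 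For $\mathcal{G}_\delta\mathcal{D}_\delta e$ I would rewrite it as a double integral whose integrand is proportional to the discrete second-order difference $e(\bm{x}+\bm{s}+\bm{s}') - e(\bm{x}+\bm{s}) - e(\bm{x}+\bm{s}') + e(\bm{x})$, use the Taylor bound $|\cdot| \leq |\bm{s}||\bm{s}'||D^2 e|_\infty$, and close with the product of two second-moment integrals of $\rho_\delta$.

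For each of the remaining two pieces, of the form $\mathcal{G}_\delta(\Pi^h - I)g$ with $g$ either $\mathcal{D}_\delta e$ or $\mathcal{D}_\delta\bm{u}$, synchronized convergence gives $|\nabla(\Pi^h-I)g|_\infty \leq Ch_{\max}^2|D^3 g|_\infty$ and $|D^2(\Pi^h-I)g|_\infty \leq Ch_{\max}^2|D^4 g|_\infty$, and then the symmetric-Taylor estimate $|\mathcal{G}_\delta v|_\infty \leq |\nabla v|_\infty + C\delta|D^2 v|_\infty$ reduces the task to bounding $|D^3 g|_\infty + \delta|D^4 g|_\infty$ by $C|\bm{u}^{(4)}|_\infty$. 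Since $\mathcal{D}_\delta$ commutes with differentiation on $\R^\nd$, the Lipschitz-type estimate $|\mathcal{D}_\delta w|_\infty \leq C|\nabla w|_\infty$ (from $\int\rho_\delta|\bm{s}|^2\,d\bm{s} = \nd$) controls $|D^3 g|_\infty$ by $|\bm{u}^{(4)}|_\infty$, while the crude bound $|\mathcal{D}_\delta w|_\infty \leq C|w|_\infty/\delta$ (valid because $D^4\Pi^h\bm{u}$ is controlled a.e.\ for the cubic B-spline RK basis) delivers $\delta|D^4 g|_\infty \leq C|\bm{u}^{(4)}|_\infty$, so the $\delta$ in the Taylor remainder precisely cancels the $1/\delta$.

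The main obstacle throughout is maintaining uniformity in $\delta$: any naive $L^\infty$-bound on $\mathcal{G}_\delta$ or $\mathcal{D}_\delta$ introduces a spurious $1/\delta$ that would destroy the claim. The resolution is to always invoke the symmetric Taylor expansions — which convert each nonlocal operator into a local differential operator plus a controlled $O(\delta)$ remainder — so that every residual $1/\delta$ arising from a crude bound is paired with a compensating $\delta$ or $h_{\max}^2$ factor, together with the cancellations in the high-order derivative estimates of $\Pi^h\bm{u}$ that are available for this cubic B-spline RK construction on the rectilinear Cartesian grid.
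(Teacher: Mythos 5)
Your decomposition is algebraically correct, and your treatment of the first two pieces is sound: the symmetrized second difference for $\mathcal{L}^B_\delta e$ matches the paper's argument, and your double-integral mixed second difference for $\mathcal{G}_\delta\mathcal{D}_\delta e$, closed by the product of two second moments, is a clean, fully $\delta$-uniform estimate. The genuine gap is in the piece $\mathcal{G}_\delta(\Pi^h-I)\mathcal{D}_\delta e$. You invoke synchronized convergence for $g=\mathcal{D}_\delta e$ at the orders $|D^3 g|_\infty$ and $|D^4 g|_\infty$, but \cref{lem:synchronizedconvergence} requires the interpolated function to be in $C^4$ and $e=\Pi^h\bm{u}-\bm{u}$ is not: the cubic B-spline interpolant is only $C^2$, so $D^3\Pi^h\bm{u}$ is discontinuous across cell boundaries and $D^4\Pi^h\bm{u}$ is a distribution supported on the grid hyperplanes. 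The parenthetical claim that ``$D^4\Pi^h\bm{u}$ is controlled a.e.'' does not rescue this — the estimates $|\nabla(\Pi^h-I)g|_\infty\leq Ch_{\max}^2|D^3g|_\infty$ and $|D^2(\Pi^h-I)g|_\infty\leq Ch_{\max}^2|D^4g|_\infty$ need classical derivatives of $g$, and even a.e.\ bounds on $D^3 e$ are not $O(h_{\max}^2)$ small, so the required inequality $|D^3\mathcal{D}_\delta e|_\infty\leq C|\bm{u}^{(4)}|_\infty$ does not follow from anything you have stated. A second, independent flaw: the ``crude bound'' $|\mathcal{D}_\delta w|_\infty\leq C|w|_\infty/\delta$ requires $\int_{B_1}\rho(|\bm{s}|)|\bm{s}|\,d\bm{s}<\infty$, i.e., a finite \emph{first} moment of the kernel, whereas the paper only assumes a bounded second moment (fractional kernels $\rho(|\bm{s}|)=C|\bm{s}|^{-\nd-2\alpha}$ with $\alpha\in[1/2,1)$ violate it). Any surviving $1/\delta$ would in addition destroy uniformity in the regime $\delta\ll h_{\max}$, which the lemma must cover.

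The paper circumvents all of this by never splitting off $\mathcal{G}_\delta(\Pi^h-I)\mathcal{D}_\delta\bm{E}$: it keeps $\Pi^h\mathcal{D}_\delta\bm{E}$ intact, bounds $\mathcal{G}_\delta$ by a single first-order difference ($|\mathcal{G}_\delta v|_\infty\leq\nd\,|\nabla v|_\infty$, which needs no $D^2v$ and hence no $\delta$-remainder), and then invokes the composed synchronized estimate $|D^{\bm{\beta}}\Pi^h D^{\bm{\alpha}}E_j|_\infty\leq Ch_{\max}^2|u_j^{(4)}|_\infty$ for $|\bm{\alpha}|=|\bm{\beta}|=1$ from \cite{Li1998synchronized,Leng2019a}, so that only first derivatives of $\Pi^h$-objects ever appear; the remaining term $\Pi^h\theta-\theta$ involves only the smooth $\theta=\tfrac{\nd}{m}\mathcal{D}_\delta\bm{u}$, whose third derivative is controlled by $|\bm{u}^{(4)}|_\infty$ via $D^3\mathcal{D}_\delta\bm{u}=\mathcal{D}_\delta D^3\bm{u}$ and the Lipschitz bound. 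If you want to keep your splitting, the fix is to drop the $\delta|D^2v|_\infty$ term entirely and bound $|\nabla(\Pi^h-I)\mathcal{D}_\delta e|_\infty$ by $W^{1,\infty}$-stability of $\Pi^h$ together with $|\nabla\mathcal{D}_\delta e|_\infty=|\mathcal{D}_\delta\nabla e|_\infty\leq C|D^2e|_\infty\leq Ch_{\max}^2|\bm{u}^{(4)}|_\infty$, which uses only the $|\bm{\alpha}|=2$ case of \cref{lem:synchronizedconvergence}; as written, however, the argument for this term does not close.
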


\begin{comment}
From Taylor's theorem, for $\bm{x} \in \mathbb{R}^{\nd}$ and $j=1, \ldots, \nd$, we define the reminder of $u_j(\bm{x})$ as 
\begin{equation} \label{eqn:TaylorThm}
%\begin{aligned}
R_j(\bm{x}) = u_j(\bm{x}) -  \sum_{|\bm{\alpha}|\leq 2}\frac{(D^{\bm{\alpha}}u_j)(\bm{0})}{\bm{\alpha} !} \bm{x}^{\bm{\alpha}}. 
%\sum_{|\bm{\alpha}|= 3}\frac{R_{\bm{\alpha}}(\bm{x})}{\bm{\alpha}!} \bm{x}^{\bm{\alpha}} \\
%&= \sum_{|\bm{\alpha}|\leq 3}\frac{(D^{\bm{\alpha}}u)(\bm{y})}{\bm{\alpha} !} (\bm{x} -\bm{y})^{\bm{\alpha}} + C|u^{(4)}|_{\infty} \sum_{|\bm{\alpha}|= 4}\frac{(\bm{x} -\bm{y})^{\bm{\alpha}}}{\bm{\alpha}!} 
%\end{aligned}
\end{equation}
It is important to note that $R_j(\bm{x}) \in C^4 (\mathbb{R}^{\nd})$ since $u_j(\bm{x})$ is $C^4$. The RK interpolant of $R_j(\bm{x})$ is given by 
\begin{equation} \notag 
%\begin{aligned}
\Pi^h R_j(\bm{x}) = \Pi^h u_j(\bm{x})  - \sum_{|\bm{\alpha}|\leq 2}\frac{D^{\bm{\alpha}}u_j(\bm{0})}{\bm{\alpha}!} \Pi^h \bm{x}^{\bm{\alpha}}. 
 %\sum_{|\bm{\alpha}|=3}\sum_{\bm{k} \in \mathbb{Z}^d} \frac{R_{\bm{\alpha}}(\bm{x_k}) }{\bm{\alpha}!} \Psi_{\bm{k}}(\bm{x})\bm{x_k}^{\bm{\alpha}},
%\end{aligned}
\end{equation}
\end{comment}

\begin{proof}
We first define the interpolation error of $u_j(\bm{x})$, for $\bm{x} \in \mathbb{R}^{\nd}$, and $j=1, \ldots, \nd$, as
\begin{equation} \notag 
E_j(\bm{x}) = \Pi^h u_j(\bm{x}) - u_j(\bm{x}), 
\end{equation}
then

\begin{equation}
\bm{E}(\bm{x}) = [E_1(\bm{x}), \ldots, E_{\nd}(\bm{x})]^T.
\end{equation}
By restricting on the the grid point $\bm{x_k}$, for $i=1, \ldots, \nd$, the truncation error of $\mathcal{L}^B_{\delta}$ is given as
\begin{equation} \label{eqn:bondTruncationError} 
\begin{aligned}
\left| \left[ \mathcal{L}^B_{\delta} \left(\Pi^h \bm{u} - \bm{u} \right) \right]_i (\bm{x_k})  \right| &=  \left| \left[ \mathcal{L}^B_{\delta}\bm{E} \right]_i (\bm{x_k}) \right|, \\
&=\left| \sum^{\nd}_{j=1} \mathlarger{\int}_{B_{\delta}} \rho_{\delta}(|\bm{s}|) \frac{s_i s_j}{|\bm{s}|^2} \left( E_j(\bm{x_k}+\bm{s}) - E_j(\bm{x_k}) \right) d\bm{s} \right|. 
\end{aligned}
\end{equation}
Next, using \cref{lem:synchronizedconvergence}, we can bound the interpolation error as
\begin{equation} \label{eqn:RemainderError}
\begin{aligned}
| E_j(\bm{x_k}+\bm{s}) + E_j(\bm{x_k}-\bm{s}) - 2E_j(\bm{x_k})| & \leq  C |\bm{s}|^2 \max_{|\bm{\alpha}|=2}\left|D^{\bm{\alpha}}E_j(\bm{x}) \right|_{\infty},  \\
%& \leq  C |\bm{s}|^2  \left|u_j^{(4)} \right|_{\infty} h_{\max}^2 \, ,  \\
& \leq  C |\bm{s}|^2 |u_j^{(4)}|_{\infty} h_{\max}^2 \, .
\end{aligned}
\end{equation}
Combining \cref{eqn:bondTruncationError,eqn:RemainderError}, we have
\begin{equation} \label{eqn:BondBasedTruncationError}
\begin{aligned}
\left| [ \mathcal{L}^B_{\delta} \left(\Pi^h \bm{u} - \bm{u}\right) (\bm{x_k}) ]_i \right| & \leq C  h_{\max}^2 \sum^{\nd}_{j=1}|u_j^{(4)}|_{\infty}  \int_{B_{\delta}} \rho_{\delta}(|\bm{s}|)|s_i| |s_j| \, d\bm{s} , \\
 & \leq C h_{\max}^2 \left| \bm{u}^{(4)} \right|_{\infty} ,
\end{aligned}
\end{equation}
%where we have used \cref{eqn:WeightedVolume} to obtain the equality
%\begin{equation} \label{eqn:LikeWeightedVolume}
%\int_{B_{\delta}} \rho_{\delta}(|\bm{s}|)|s_i| |s_j| \, d\bm{s} =C m(\bm{x}).
%\end{equation}
where we have used \cref{eqn:BoundedMoment} and $C > 0$ is a generic constant depending the dimension, $\nd$. 
%%%%%%%%%%%%%%%%%%%%%%%%%%%%%%%%%%%%%%%%%%%%%%%%%%%
\begin{comment}
Similarly for $|\bm{\alpha}| = 3$, $|\bm{\alpha_1}| = 1$ and $|\bm{\alpha_2}| = 2$,
\begin{equation} \notag
\begin{aligned}
& \sum_{|\bm{\alpha}|=3} \left( \sum_{\bm{k} \in \mathbb{Z}^d} \Psi_{\bm{k}}(\bm{x})\bm{x_k}^{\bm{\alpha}} - \bm{x}^{\bm{\alpha}} \right) \\
% & \quad = \sum_{\substack{|\bm{\alpha}|=3, \\ |\bm{\alpha_1}| = 1, \\ |\bm{\alpha_2}| = 2}} \left( \sum_{\bm{k} \in \mathbb{Z}^d} \Psi_{\bm{k}}(\bm{x})\left(\bm{x_k}^{\bm{\alpha}} - 3 \bm{x_k}^{\bm{\alpha_2}}\bm{x}^{\bm{\alpha_1}} + 3 \bm{x_k}^{\bm{\alpha_1}}\bm{x}^{\bm{\alpha_2}} - \bm{x}^{\bm{\alpha}}\right) + 3\bm{x}^{\bm{\alpha_1}}\bm{m}_{\bm{\alpha_2}}(\bm{x}) \right) \\
& \quad = \sum_{\substack{|\bm{\alpha}|=3, \\ |\bm{\alpha_1}| = 1, \\ |\bm{\alpha_2}| = 2}} \left[ \bm{m}_{\bm{\alpha}}(\bm{x}) + 3\bm{x}^{\bm{\alpha_1}}\bm{m}_{\bm{\alpha_2}}(\bm{x}) \right] \\
& \quad = \sum_{\substack{|\bm{\alpha}|=3, \\ |\bm{\alpha_1}| = 1, \\ |\bm{\alpha_2}| = 2}} 3\bm{x}^{\bm{\alpha_1}}\bm{m}_{\bm{\alpha_2}}(\bm{x})
\end{aligned}
\end{equation}
\end{comment}
%%%%%%%%%%%%%%%%%%%%%%%%%%%%%%%%%%%%%%%%%%%%%%%%%%

Next, we define the interpolation error of the nonlocal dilatation as
\begin{equation} \label{eqn:DilationTruncationError}
\begin{aligned}
E_{\theta} &= \Pi^h \mathcal{D}_{\delta} \Pi^h \bm{u} - \mathcal{D}_{\delta} \bm{u}, \\
&=\Pi^h (\mathcal{D}_{\delta} \Pi^h \bm{u} - \mathcal{D}_{\delta}\bm{u}) + \Pi^h \mathcal{D}_{\delta}\bm{u} -\mathcal{D}_{\delta} \bm{u}, \\
&=  \Pi^h \mathcal{D}_{\delta} \bm{E} +  (\Pi^h \theta - \theta) ,
%& =   \sum^3_{n=1} \Pi^h \int_{B_{\delta}} \rho_{\delta}(|\bm{s}|) s_n \left( E_n(\bm{x}+\bm{s}) - E_n(\bm{x}) \right) d\bm{s} \\
%& \quad + \Pi^h \mathcal{D}_{\delta} \bm{R} - \mathcal{D}_{\delta} \bm{R} .
\end{aligned}
\end{equation}
where we have used the definition of the nonlocal dilatation \cref{eqn:Dilatation}. There are two RK interpolation projectors ($\Pi^h$) in the first line of \cref{eqn:DilationTruncationError} because we apply RK interpolation to $\bm{u}$ and $\theta$, then back-substitute $\theta$ to get a pure displacement form. 
The nonlocal gradient operator acting on $E_{\theta}$ can be bounded by
\begin{equation} \label{eqn:GradientTruncationError}
\begin{aligned}
\left| [ \mathcal{G}_{\delta} E_{\theta} (\bm{x_k}) ]_i \right| & =  \left| \int_{B_{\delta}} \rho_{\delta}(|\bm{t}|) t_i \left( E_{\theta}(\bm{x_k}+\bm{t}) - E_{\theta}(\bm{x_k}) \right) d\bm{t} \right|, \\
 &\leq \max_{|\bm{\beta}|=1}\left|D^{\bm{\beta}}E_{\theta} \right|_{\infty} \int_{B_{\delta}} \rho_{\delta}(|\bm{t}|)|t_i| |\bm{t}|  \, d\bm{t} \, , \\
 & \leq C \max_{|\bm{\beta}|=1}\left|D^{\bm{\beta}}E_{\theta} \right|_{\infty}, \\
 & \leq C \max_{|\bm{\beta}|=1}\left|D^{\bm{\beta}} \Pi^h \mathcal{D}_{\delta} \bm{E}  \right|_{\infty} + C \max_{|\bm{\beta}|=1}\left|D^{\bm{\beta}} (\Pi^h \theta - \theta)  \right|_{\infty} , 
\end{aligned}
\end{equation}
for $i =1, \ldots, \nd$.
%and we have used \cref{eqn:LikeWeightedVolume}}
We can bound the first term in the last line of \cref{eqn:GradientTruncationError} by
\begin{equation} \label{eqn:FirstOrderRemainderError}
\begin{aligned}
\max_{|\bm{\beta}|=1}\left| D^{\bm{\beta}}  \Pi^h \mathcal{D}_{\delta} \bm{E} \right|_{\infty} & =  \max_{|\bm{\beta}|=1}\left| \sum^{\nd}_{j=1}  D^{\bm{\beta}} \Pi^h \int_{B_{\delta}} \rho_{\delta}(|\bm{s}|) s_j \left( E_j(\bm{x}+\bm{s}) - E_j(\bm{x}) \right) d\bm{s} \right|_{\infty}, \\
& \leq  \sum^{\nd}_{j=1} \int_{B_{\delta}} \rho_{\delta}(|\bm{s}|) |s_j| |\bm{s}|  d\bm{s} \max_{|\bm{\alpha}|=|\bm{\beta}|=1}  \left| D^{\bm{\beta}} \Pi^h D^{\bm{\alpha}}E_j(\bm{x}) \right|_{\infty},  \\
& \leq C h^2_{\max} \left|\bm{u}^{(4)} \right|_{\infty},
\end{aligned}
\end{equation}
\begin{comment}
\tcg{[XT: please read the following comments carefully.\\
1. I have looked at \cite[eq.(36)]{Li1998synchronized} you are pointing to, but am not very clear how do we arrive at the estimate we want from there. If you are very sure that can be done, then maybe you can elaborate it more by adding a lemma before.\\
2. I found that we actually only need either quadrature exactness or synchronized convergence. It is redundant to require both. This means if you can finish 1, then we probably need to simply the proof (we may also revise the diffusion paper). You can see that you don't need quadratic exactness by having already synchronized convergence, because \cref{eqn:RemainderError} is still true if $E$ is defined as not $\Pi^h R -R$, but $\Pi^h u -u$. In other words, we did not use any special property of the remainder $R$ to get \cref{eqn:RemainderError}. \\  
3. Another choice is to use quadratic exactness only. In other words, we actually don't need synchronized convergence if we have quadratic exactness. The way to do it is by mimicking the proof in \cite{Yang2018}. I have some rough derivations. We just need time to write everything down.\\
4. Please think more on Quadratic Exactness v.s Synchronized Convergence issue. Do we actually need them both or not?]}
\end{comment}
where the derivation of the second line to the last can be obtained by similar expansion of \cite[eq.(36)]{Li1998synchronized} and the results of \cite[Lemma 4.1]{Leng2019a}, and we have used \cref{lem:synchronizedconvergence}. 
Next, we have the bound of the second term in the last line of \cref{eqn:GradientTruncationError} as
\begin{equation} \label{eqn:ThetaApproximationError}
\max_{|\bm{\beta}|=1}\left| D^{\bm{\beta}} (\Pi^h \theta - \theta) \right|_{\infty} \leq  C \left|\theta^{(3)}\right|_{\infty}h^2_{\max} \, ,
\end{equation}
and $\left|\theta^{(3)}\right|_{\infty}$ is bounded by
\begin{equation} \label{eqn:ThetaThirdError}
\begin{aligned}
\left|\theta^{(3)}\right|_{\infty} &= \frac{{\nd}}{m(\bm{x})}\max_{|\bm{\beta}|={3}}  \left| \sum^{{\nd}}_{j=1}  D^{\bm{\beta}}   \int_{B_{\delta}} \rho_{\delta}(|\bm{s}|) s_j  \left( u_j(\bm{x}+\bm{s}) - u_j(\bm{x}) \right) d\bm{s} \right|_{\infty}, \\
& \leq  C \sum^{{\nd}}_{j=1} \int_{B_{\delta}} \rho_{\delta}(|\bm{s}|) |s_j| |\bm{s}|  d\bm{s} \max_{|\bm{\alpha}|=4}  \left|D^{\bm{\alpha}}  u_j(\bm{x}) \right|_{\infty}, \\
& \leq C \left|\bm{u}^{(4)} \right|_{\infty} .
\end{aligned}
\end{equation}

%%%%%%%%%%%%%%%%%%%%%%%%%%%%%%%%%%%%
\begin{comment}
\begin{equation} 
\begin{aligned}
\frac{m(\bm{x})}{3}E_{\theta}(\bm{x}) & \leq C  \sum^3_{n=1} \left| D^{\bm{\alpha}}E_n(\bm{x}) \right| \int_{B_{\delta}} \rho_{\delta}(|\bm{s}|)|s_n| |\bm{s}| \, d\bm{s}, \\
 & \leq C  \sum^3_{n=1} \left| D^{\bm{\alpha}}E_n(\bm{x}) \right| ,
\end{aligned}
\end{equation}
\end{comment}
%%%%%%%%%%%%%%%%%%%%%%%%%%%%%%%%%%%%
\noindent 
By collecting \cref{eqn:GradientTruncationError,eqn:FirstOrderRemainderError,eqn:ThetaApproximationError,eqn:ThetaThirdError}, the truncation error of the composition of the nonlocal gradient and divergence operators is bounded by
\begin{equation}  \label{eqn:GDcompositionerror}
\left| [ (\mathcal{G}_{\delta} \Pi^h\mathcal{D}_{\delta}\Pi^h \bm{u} - \mathcal{G}_{\delta}\mathcal{D}_{\delta} \bm{u}) (\bm{x_k}) ]_i \right| \leq C h^2_{\max} \left|\bm{u}^{(4)} \right|_{\infty} .
\end{equation}
Finally, the proof is finished by combing \cref{eqn:BondBasedTruncationError,eqn:GDcompositionerror}. 
\end{proof}

With stability \cref{thm:stability} and consistency \cref{lem:consistency} of the RK collocation method, we can immediately show the convergence to the nonlocal solution.

\begin{thm} \label{thm:convergencetononlocal}
\textbf{(Uniform Convergence to nonlocal solution)} For a fixed $\delta \in (0, \delta_0]$, assume the nonlocal exact solution $\bm{u}^{\delta}$ is sufficiently smooth, i.e., $\bm{u}^{\delta} \in C^4(\overline{\Omega_{\mathcal{\delta}}}; \, \mathbb{R}^{\nd})$. Moreover, assume $|{\bm{u}^{\delta}}^{(4)}|_{\infty}$ is uniformly bounded for every $\delta$. Let $\bm{u}^{\delta, h}$ be the numerical solution of the collocation scheme \cref{eqn:CollocationScheme}, then,
\begin{equation} \notag 
\| \bm{u}^{\delta} - \bm{u}^{\delta, h} \|_{L^2(\Omega; \, \mathbb{R}^{\nd})} \leq C h_{\max}^2,
\end{equation}
where $C$ is independent of $h_{\max}$ and $\delta$.
\end{thm}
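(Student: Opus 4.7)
The plan is to combine the stability result (Theorem 4.1) and the uniform consistency estimate (Lemma 4.5) in the standard ``stability + consistency $\Rightarrow$ convergence'' template, with the only subtlety being that all the discrete pieces must live in the trial space $S(\square\cap\Omega;\mathbb{R}^{\nd})$ so Theorem 4.1 is applicable.

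First I would compare the numerical solution $\bm{u}^{\delta,h}$ with the interpolant $\Pi^h\bm{u}^{\delta}$. Since $\bm{u}^{\delta}=\bm{0}$ on $\Omega_{\mathcal{I}}$, we have $\bm{u}^{\delta}(\bm{x}_{\bm{k}})=\bm{0}$ for every $\bm{x}_{\bm{k}}\in\square\setminus\Omega$, which implies $\Pi^h\bm{u}^{\delta}\in S(\square\cap\Omega;\mathbb{R}^{\nd})$, and therefore the difference $\bm{e}^h:=\bm{u}^{\delta,h}-\Pi^h\bm{u}^{\delta}$ lies in the same trial space. Using the fact that $\bm{u}^{\delta}$ solves $-\mathcal{L}^S_{\delta}\bm{u}^{\delta}=\bm{f}$ in $\Omega$ together with the collocation equation \cref{eqn:CollocationScheme}, one obtains the discrete residual identity
\begin{equation} \notag
r^h_{\Omega}\bigl(-\mathcal{L}^S_{\delta}\bm{e}^h\bigr) \;=\; r^h_{\Omega}\bigl(-\mathcal{L}^S_{\delta}\bm{u}^{\delta}\bigr) - r^h_{\Omega}\bigl(-\mathcal{L}^S_{\delta}\Pi^h\bm{u}^{\delta}\bigr).
\end{equation}

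Next I would apply Theorem 4.1 to $\bm{e}^h$ to get
\begin{equation} \notag
\|\bm{e}^h\|_{L^2(\mathbb{R}^{\nd};\mathbb{R}^{\nd})} \;\leq\; C\,\bigl|r^h_{\Omega}(-\mathcal{L}^S_{\delta}\bm{e}^h)\bigr|_{h}
\;\leq\; C\,\bigl|r^h\mathcal{L}^S_{\delta}\Pi^h\bm{u}^{\delta} - r^h\mathcal{L}^S_{\delta}\bm{u}^{\delta}\bigr|_{h},
\end{equation}
where in the last step I extend $\bm{e}^h$ and the residual by zero outside $\Omega$, which is harmless because $\bm{e}^h$ already vanishes off $\square\cap\Omega$, and I bound $|r^h_\Omega\cdot|_h$ by $|r^h\cdot|_h$. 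The right-hand side is precisely the object controlled by the uniform consistency lemma, Lemma 4.5, giving
\begin{equation} \notag
\|\bm{e}^h\|_{L^2(\mathbb{R}^{\nd};\mathbb{R}^{\nd})} \;\leq\; C\, h_{\max}^2\,\bigl|(\bm{u}^{\delta})^{(4)}\bigr|_{\infty},
\end{equation}
with $C$ independent of $\delta$ and $h_{\max}$ thanks to the $\delta$-uniformity of both Theorem 4.1 and Lemma 4.5.

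Finally, a triangle inequality
\begin{equation} \notag
\|\bm{u}^{\delta}-\bm{u}^{\delta,h}\|_{L^2(\Omega;\mathbb{R}^{\nd})} \;\leq\; \|\bm{u}^{\delta}-\Pi^h\bm{u}^{\delta}\|_{L^2(\Omega;\mathbb{R}^{\nd})} + \|\bm{e}^h\|_{L^2(\Omega;\mathbb{R}^{\nd})},
\end{equation}
combined with the $O(h_{\max}^2)$ interpolation error from the synchronized convergence lemma (Lemma 4.4 with $|\bm{\alpha}|=0$) and the $\delta$-uniform bound on $|(\bm{u}^{\delta})^{(4)}|_{\infty}$ that is assumed, yields the claimed estimate. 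The only nontrivial bookkeeping step, which I would flag as the main obstacle, is verifying that the interpolant respects the homogeneous volumetric constraint and that $\bm{e}^h$ really can be extended by zero so that Theorem 4.1 (stated for $\bm{u}\in S(\square\cap\Omega;\mathbb{R}^{\nd})$ with an $L^2(\mathbb{R}^{\nd})$-norm on the right) applies without losing the $\delta$-uniform constant; everything else is routine assembly of the pieces already proved.
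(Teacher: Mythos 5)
Your proposal is correct and follows essentially the same route as the paper's own proof: compare $\bm{u}^{\delta,h}$ with $\Pi^h\bm{u}^{\delta}$ (which lies in the trial space because $\bm{u}^{\delta}$ vanishes on $\Omega_{\mathcal{I}}$ and hence at all grid points outside $\Omega$), apply the stability estimate of \cref{thm:stability} to that difference, control the resulting residual by the uniform consistency bound of \cref{lem:consistency}, and finish with the triangle inequality and the interpolation estimate from \cref{lem:synchronizedconvergence}. The bookkeeping point you flag about the zero extension and the admissibility of $\Pi^h\bm{u}^{\delta}$ is handled in the paper exactly as you describe, by extending $\bm{u}^{\delta}$ by zero to all of $\mathbb{R}^{\nd}$ at the outset.
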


\begin{proof}
First, we can extend $\bm{u}^{\delta}$ to $\R^{\nd}$ by zero such that $\bm{u}^\delta \in C^4(\R^{{\nd}}; \, \mathbb{R}^{\nd})$ because $\bm{u}^{\delta} = \bm{0}$ on $\Omega_{\mathcal{I}}$.  Recall the nonlocal model \cref{eqn:NonlocalEqn} and the collocation scheme \cref{eqn:CollocationScheme},
\[ 
- r^h_{\Omega}\mathcal{L}^S_{\delta} \bm{u}^{\delta,h} = r^h_{\Omega} \bm{f} = - r^h_{\Omega}\mathcal{L}^S_{\delta} \bm{u}^{\delta} \, .
%- p_{\Omega}\Pi^h \mathcal{L}_{\delta} u^{\delta,h} = p_{\Omega} \Pi^h f = - p_{\Omega} \Pi^h \mathcal{L}_{0} u^{0} \, .
\]
Then, gathering \cref{thm:stability}, \cref{lem:consistency} and the above equation, we have
\begin{align*}
\| \Pi^h \bm{u}^{\delta}  - \bm{u}^{\delta,h}  \|_{L^2(\Omega; \, \mathbb{R}^{\nd})} &    \leq  C \left| r^h_{\Omega} \mathcal{L}^S_{\delta} \left(\Pi^h \bm{u}^{\delta} - \bm{u}^{\delta,h}  \right)  \right|_{h} ,\\
& \leq C \left| r^h_{\Omega} \mathcal{L}^S_{\delta} \Pi^h \bm{u}^{\delta} - r^h_{\Omega} \mathcal{L}^S_{\delta} \bm{u}^{\delta, h}  \right|_{h},\\
& \leq C \left| r^h_{\Omega} \mathcal{L}^S_{\delta} \Pi^h \bm{u}^{\delta} - r^h_{\Omega} \mathcal{L}^S_{\delta} \bm{u}^{\delta}  \right|_{h},\\
& \leq C  h_{\max}^2 \, .
\end{align*}
%\end{equation}
We finish the proof by applying the triangle inequality
\begin{align*}
\|\bm{u}^{\delta}  - \bm{u}^{\delta,h}  \|_{L^2(\Omega; \, \mathbb{R}^{\nd})} &\leq \| \bm{u}^{\delta}  - \Pi^h \bm{u}^{\delta}  \|_{L^2(\Omega; \, \mathbb{R}^{\nd})}  +  \| \Pi^h \bm{u}^{\delta}  - \bm{u}^{\delta,h}  \|_{L^2(\Omega; \, \mathbb{R}^{\nd})} \\
& \leq C h_{\max}^2.
\end{align*}
\end{proof}

%As discussed in Remark, $\lambda^G(\bm{\xi})$  and $\lambda^C(\bm{\xi})$ are exactly the same provided that the piece-wise linear element is used in Galerkin method. Then it is natural to ask if the RK collocation scheme is asymptotically compatible. Next, we show that the convergence of the RK collocation scheme to the local limit model is indeed uniform and is independent of $\delta$. We start with the truncation error between the collocation scheme and local limit of the nonlocal model. 

Before showing that the convergence of the RK collocation scheme  to the local limit is independent of $\delta$, we need the bound of truncation error between the collocation scheme and the local limit of the peridyanmic Navier model. 

\begin{lemma} \label{lem:DiscreteME}
\textbf{(Asymptotic consistency I)} Assume $\bm{u} \in C^4(\R^{{\nd}}; \, \mathbb{R}^{\nd})$, then
\begin{equation} \notag 
| r^h\mathcal{L}^S_{\delta}\Pi^h \bm{u} - r^h \mathcal{L}^S_0 \bm{u}|_{h} \leq C |\bm{u}^{(4)}|_{\infty} (h_{\max}^2 + \delta^2) ,
\end{equation}
where $C$ is independent of $h_{\max}$ and $\delta$. 
\end{lemma}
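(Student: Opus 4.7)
The plan is to split the error via the triangle inequality as
\[
| r^h\mathcal{L}^S_{\delta}\Pi^h \bm{u} - r^h \mathcal{L}^S_0 \bm{u}|_{h}
\;\leq\;
| r^h\mathcal{L}^S_{\delta}\Pi^h \bm{u} - r^h \mathcal{L}^S_{\delta} \bm{u}|_{h}
\;+\; | r^h(\mathcal{L}^S_{\delta}\bm{u} - \mathcal{L}^S_0 \bm{u})|_{h},
\]
so that the first summand is exactly what \cref{lem:consistency} bounds by $Ch_{\max}^{2}\,|\bm{u}^{(4)}|_{\infty}$, and everything reduces to the nonlocal-to-local truncation estimate
\[
\sup_{\bm{x}\in\Omega}\bigl|\mathcal{L}^S_{\delta} \bm{u}(\bm{x}) - \mathcal{L}^S_0 \bm{u}(\bm{x})\bigr|
\;\leq\; C\,\delta^{2}\,|\bm{u}^{(4)}|_{\infty}.
\]
Once this is established, the $L^{2}$-stability of the cubic B-spline basis together with the observation that $r^h$ extracts only $O(h_{\max}^{-\nd})$ nodal values inside the bounded set $\Omega$ (where the two operators are defined, with zero extension elsewhere) yields $|r^h \bm{v}|_h \leq C|\Omega|^{1/2}\sup_{\Omega}|\bm{v}|$, which converts the pointwise estimate into the desired discrete-norm bound.

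The pointwise estimate is obtained by Taylor expanding $\bm{u}(\bm{x}+\bm{s})$ about $\bm{x}$ through order three with an integral remainder of order four. Substituting into the three building blocks $\mathcal{L}^B_{\delta}$, $\mathcal{D}_{\delta}$, $\mathcal{G}_{\delta}$ and using the radial symmetry of $\rho_\delta(|\bm{s}|)$ in $\bm{s}$, every odd-degree Taylor term integrates to zero. The surviving even-degree terms reproduce the local Lam\'e operator exactly: the quadratic Taylor term inside $\mathcal{L}^B_\delta$, combined with the moment identity \cref{eqn:BoundedMoment}, gives a $\mu\,\mathrm{div}(\nabla\bm{u})$-type contribution; the linear term inside $\mathcal{D}_\delta$ yields $\mathrm{div}\,\bm{u}$; a second Taylor expansion inside the outer $\mathcal{G}_\delta$ then recovers $\nabla\,\mathrm{div}\,\bm{u}$. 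Together with the prefactors $C_\alpha,C_\beta$ in \cref{eqn:PDoperator} and the identity $m(\bm{x})=\nd$ following \cref{eqn:WeightedVolume}, this sums to precisely $\mathcal{L}^S_0\bm{u}(\bm{x})$. The fourth-order Taylor remainder is bounded pointwise by $C\,|\bm{u}^{(4)}|_\infty |\bm{s}|^{4}$, and the scaling \cref{eqn:NonlocalKernelScaling} gives
\[
\int_{B_\delta}\rho_\delta(|\bm{s}|)|\bm{s}|^{4}\,d\bm{s} \;=\; \delta^{2}\int_{B_1}\rho(|\bm{t}|)|\bm{t}|^{4}\,d\bm{t} \;=\; O(\delta^{2}),
\]
which supplies the required $\delta^{2}$ factor.

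The main obstacle I anticipate is the composition $\mathcal{G}_\delta\mathcal{D}_\delta$: a two-stage Taylor expansion must be carried out so that the remainder produced by the inner $\mathcal{D}_\delta$ step is smooth enough that applying $\mathcal{G}_\delta$ preserves the $\delta^{2}$ rate rather than degrading it to $\delta$. The cleanest way is to write the intermediate remainder explicitly as an integral of $\rho_\delta$ against a Taylor remainder of $\bm{u}$, then differentiate under the integral sign so that the outer $\mathcal{G}_\delta$ sees a function whose derivatives are bounded uniformly in $\delta$ by $|\bm{u}^{(4)}|_\infty$; the moment bounds already used for the bond-based term then close the argument. Once this bookkeeping is in place, collecting all pieces and inserting them back into the triangle split completes the proof.
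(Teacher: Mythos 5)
Your proposal is correct and follows essentially the same route as the paper: the identical triangle-inequality split, with the first term bounded by \cref{lem:consistency} and the second by the nonlocal-to-local model error $|\mathcal{L}^S_{\delta}\bm{u}-\mathcal{L}^S_{0}\bm{u}|\leq C\delta^{2}|\bm{u}^{(4)}|_{\infty}$. The only difference is that the paper dispatches that second bound in one line as ``the continuous property of the nonlocal operators,'' whereas you supply the Taylor-expansion argument (including the two-stage expansion needed for $\mathcal{G}_{\delta}\mathcal{D}_{\delta}$, which the paper carries out only in the quasi-discrete analogue, \cref{lem:DiffernceNonlocal}); your sketch of that argument is sound.
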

\begin{proof}
From \cref{lem:consistency} and the continuous property of the nonlocal operators, we have
\begin{align*} 
\left| r^h \mathcal{L}^S_{\delta} \Pi^h \bm{u} - r^h \mathcal{L}^S_{0} \bm{u} \right|_{h} & \leq  \left| r^h \mathcal{L}^S_{\delta} \Pi^h\bm{u} - r^h \mathcal{L}^S_{\delta} \bm{u} \right|_{h}  + \left| r^h \mathcal{L}^S_{\delta} \bm{u} - r^h \mathcal{L}^S_{0} \bm{u} \right|_{h}, \\
& \leq C \left|\bm{u}^{(4)} \right|_{\infty} (h_{\max}^2 + \delta^2). 
\end{align*}
\end{proof}

Combining \cref{thm:stability} and \cref{lem:DiscreteME}, we have the uniform convergence (asymptotic compatibility) to the local limit. We leave out the proof of the next theorem for conciseness because it is similar to the proof of \cref{thm:convergencetononlocal}.

\begin{thm} \label{thm:AC}
\textbf{(Asymptotic compatibility)}  Assume the local exact solution $\bm{u}^0$ is sufficiently smooth, i.e., $\bm{u}^0 \in C^4(\overline{\Omega_\delta}; \, \mathbb{R}^{\nd})$. For any $\delta \in(0, \delta_0]$, $\bm{u}^{\delta, h}$ is the numerical solution of the collocation scheme \cref{eqn:CollocationScheme}, then,
\begin{equation} \notag 
\| \bm{u}^0 - \bm{u}^{\delta, h} \|_{L^2(\Omega; \, \mathbb{R}^{\nd})} \leq C (h_{\max}^2 + \delta^2).
\end{equation}
\end{thm}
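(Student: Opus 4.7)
The plan is to mirror the argument used in the proof of Theorem~\ref{thm:convergencetononlocal}, with the asymptotic consistency estimate of Lemma~\ref{lem:DiscreteME} playing the role that the uniform consistency of Lemma~\ref{lem:consistency} played there. First I would extend $\bm{u}^0$ to a $C^4$ function on all of $\mathbb{R}^{\nd}$; since $\bm{u}^0$ is independent of $\delta$ and the hypothesis provides $C^4$ regularity on $\overline{\Omega_\delta}$, this can be done so that $|{\bm{u}^0}^{(4)}|_\infty$ is bounded independently of $\delta$. The driving observation is that the local Dirichlet problem $-\mathcal{L}^S_0 \bm{u}^0 = \bm{f}$ and the collocation scheme $-r^h_\Omega \mathcal{L}^S_\delta \bm{u}^{\delta,h} = r^h_\Omega \bm{f}$ share the same right-hand side, so
\[
r^h_\Omega \mathcal{L}^S_\delta \bm{u}^{\delta,h} \;=\; r^h_\Omega \mathcal{L}^S_0 \bm{u}^0.
\]

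Next I would apply the $\delta$-uniform stability bound of Theorem~\ref{thm:stability} to the discrete function $\Pi^h \bm{u}^0 - \bm{u}^{\delta,h}\in S(\square\cap\Omega;\mathbb{R}^{\nd})$, yielding
\[
\|\Pi^h \bm{u}^0 - \bm{u}^{\delta,h}\|_{L^2(\Omega;\,\mathbb{R}^{\nd})} \;\leq\; C\,\bigl|r^h_\Omega \mathcal{L}^S_\delta \Pi^h \bm{u}^0 - r^h_\Omega \mathcal{L}^S_\delta \bm{u}^{\delta,h}\bigr|_h.
\]
Substituting the identity above collapses the right-hand side to $C\,|r^h_\Omega \mathcal{L}^S_\delta \Pi^h \bm{u}^0 - r^h_\Omega \mathcal{L}^S_0 \bm{u}^0|_h$, which by Lemma~\ref{lem:DiscreteME} is bounded by $C\,|{\bm{u}^0}^{(4)}|_\infty (h_{\max}^2 + \delta^2)$. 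A triangle inequality combined with the standard $L^2$ interpolation estimate $\|\bm{u}^0 - \Pi^h \bm{u}^0\|_{L^2(\Omega;\,\mathbb{R}^{\nd})} \leq C h_{\max}^2$, a direct consequence of the synchronized convergence in Lemma~\ref{lem:synchronizedconvergence}, then delivers the claimed bound.

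No genuinely new obstacle appears: the $\delta$-uniformity on the stability side is supplied by Theorem~\ref{thm:stability}, and the $\delta$-uniformity on the consistency side is already built into Lemma~\ref{lem:DiscreteME}. The one mild subtlety worth noting is that, unlike in Theorem~\ref{thm:convergencetononlocal} where the nonlocal solution vanishes on $\Omega_\mathcal{I}$ and so admits a trivial zero extension, the local solution $\bm{u}^0$ only vanishes on $\partial\Omega$; nevertheless, the argument is unaffected because the stability estimate is applied to the discrete difference (which itself belongs to $S(\square\cap\Omega;\mathbb{R}^{\nd})$), and Lemma~\ref{lem:DiscreteME} requires only a smooth extension of $\bm{u}^0$, not any boundary compatibility.
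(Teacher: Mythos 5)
Your proposal is correct and follows essentially the same route as the paper's (omitted) proof: equate the discrete right-hand sides $r^h_\Omega \mathcal{L}^S_\delta \bm{u}^{\delta,h} = r^h_\Omega \mathcal{L}^S_0 \bm{u}^0$, apply the $\delta$-uniform stability of Theorem~\ref{thm:stability} to $\Pi^h\bm{u}^0 - \bm{u}^{\delta,h}$, invoke the asymptotic consistency of Lemma~\ref{lem:DiscreteME}, and finish with the triangle inequality and the interpolation estimate. Your remark that only a smooth $C^4$ extension of $\bm{u}^0$ (rather than a zero extension) is needed is a slightly more careful reading of the same argument.
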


\section{Convergence analysis of the RK collocation on the quasi-discrete peridynamic Navier equation} \label{sec:DConvergence}
In practice, accurate evaluation of the integral in nonlocal models is computationally prohibitive especially if the nonlocal kernel is singular. This motivates us to use the quasi-discrete nonlocal models as introduced in \cref{subsec:quasi-nonlocal}. It is practical to couple $\delta$ with grid size $h_{\max}$ because this results in a banded linear system. In this section, we assume $\delta = M_0 h_{\max} $ where $M_0>0$.  As $h_{\max}$ goes to zero, so does $\delta$, and the quasi-discrete nonlocal operator converges to its local limit.  We provide convergence analysis of the collocation scheme \cref{eqn:DCollocationScheme} to its local limit. 

\subsection{Stability of the RK collocation on the quasi-discrete peridynamic Navier equation}
We start with the stability of the collocation scheme \cref{eqn:DCollocationScheme}.

\begin{thm} \label{thm:DStability}
%\textbf{(Stability II)} 
For any $\delta \in (0, \delta_0]$, there exists a generic constant $C$ which depends on $\Omega$, $\delta_0$ and $M_0$, such that for $\bm{u} \in S(\square \cap \Omega;\mathbb{R}^{\nd})$, 
\[ |r^h_{\Omega}(-\mathcal{L}^S_{\delta, \epsilon} \bm{u})|_{h} \geq C \| \bm{u} \| _{L^2(\mathbb{R}^{{\nd}}; \, \mathbb{R}^{\nd})}. \]
\end{thm}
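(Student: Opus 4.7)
The plan is to adapt the Fourier-analytic argument used to prove \cref{thm:stability}. Since the quasi-discrete operator $\mathcal{L}^S_{\delta,\epsilon}$ has the same translation-invariant convolution structure as $\mathcal{L}^S_\delta$ (merely replacing integrals over $B_\delta$ by symmetric finite sums over $B^\epsilon_\delta$), every step in the proof of \cref{thm:stability} has a direct quasi-discrete analogue.

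First, I would derive the Fourier symbol $\bm{M}^S_{\delta,\epsilon}(\bm{\xi})$ of $-\mathcal{L}^S_{\delta,\epsilon}$ following the lines of \cref{lem:FPDsymbol}. It takes the same two-term form $\bm{M}^B_{\delta,\epsilon} + \bm{M}^D_{\delta,\epsilon}$ as in \cref{eqn:PDNsymbol}, built from discrete scalars
\begin{equation*}
p_1^\epsilon(|\bm{\xi}|) = \sum_{\bm{s} \in B^{\epsilon_1}_1} \omega(\bm{s})\rho(|\bm{s}|) \frac{s_1^2}{|\bm{s}|^2}(1-\cos(|\bm{\xi}|s_\nd)),\quad q_1^\epsilon, \ b_1^\epsilon \text{ analogously}.
\end{equation*}
Because the quadrature points are symmetric and $\omega(\bm{s}),\rho(|\bm{s}|)>0$, the scalars $p_1^\epsilon$ and $q_1^\epsilon$ are strictly positive for frequencies not lying on the discrete periodicity lattice of the quadrature set. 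Combined with the assumption $\lambda \geq \mu$, repeating the $\bm{v}^T \bm{M}^S_{\delta,\epsilon} \bm{v}$ computation of \cref{lem:PositiveDefinite} shows that $\bm{M}^S_{\delta,\epsilon}(\bm{\xi})$ is positive definite for $\bm{\xi}\neq \bm{0}$ in the fundamental cell $\bm{Q}$.

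Second, I would establish the quasi-discrete counterpart of \cref{lem:GCF}: the Galerkin and collocation sesquilinear forms of $-\mathcal{L}^S_{\delta,\epsilon}$ acting on $S(\square)$ have Fourier representations with symbols $\bm{M}^\epsilon_G$ and $\bm{M}^\epsilon_C$ of the same aliased-series form as \cref{eqn:lambdaG,eqn:lambdaC}, with $\bm{M}^B_\delta, \bm{M}^D_\delta$ replaced by $\bm{M}^B_{\delta,\epsilon}, \bm{M}^D_{\delta,\epsilon}$. The proof only uses translation invariance and tensor-product B-spline structure of $\Psi_{\bm{k}}$, so it carries over verbatim. The positive definiteness of $\bm{M}^\epsilon_C - C \bm{M}^\epsilon_G$ then follows from the identical comparison argument; here the coupling $\delta = M_0 h_{\max}$ enters because the lower bounds on $p_1^\epsilon(\delta(\xi_j+2\pi r_j)/h_j)$ and $q_1^\epsilon$ depend on the size of $\delta/h_j$, so the comparison constant $C$ inherits the dependence on $M_0$ advertised in the theorem statement.

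Third, the conclusion follows by the same Cauchy--Schwarz chain as in the proof of \cref{thm:stability}: for $\bm{u} = i^h(\bm{u}_{\bm{k}}) \in S(\square\cap \Omega; \mathbb{R}^\nd)$,
\begin{equation*}
|(\bm{u}_{\bm{k}})|_h \cdot |r^h_\Omega(-\mathcal{L}^S_{\delta,\epsilon}\bm{u})|_h \geq C |(i^h(\bm{u}_{\bm{k}}), -\mathcal{L}^S_{\delta,\epsilon} i^h(\bm{u}_{\bm{k}}))| \geq C \|\bm{u}\|^2_{L^2(\mathbb{R}^\nd;\mathbb{R}^\nd)}.
\end{equation*}

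The main obstacle will be the positive definiteness of $\bm{M}^S_{\delta,\epsilon}(\bm{\xi})$ together with the Galerkin/collocation comparison at frequencies where the aliased arguments $(\bm{\xi}+2\pi\bm{r})\oslash\bm{h}$ can resonate with the discrete lattice of quadrature nodes. Unlike the integral case where $p_1,q_1$ are bounded below continuously, the discrete $p_1^\epsilon,q_1^\epsilon$ could in principle vanish at isolated frequencies commensurate with the quadrature spacing; ruling this out, or controlling it via the coupling $\epsilon_1 = \epsilon/\delta$ fixed and $\delta = M_0 h_{\max}$, is the delicate point. I expect this to be handled by exploiting the same bounded-moment identity \cref{eqn:DBoundedMoment} that drives the continuous lower bound, combined with the periodicity being restricted to $\bm{\xi} \in \bm{Q}$.
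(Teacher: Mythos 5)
Your overall architecture is right (Fourier symbol, Galerkin/collocation comparison, Cauchy--Schwarz), and you correctly flag the delicate point about $p_1^{\epsilon_1},q_1^{\epsilon_1}$ possibly vanishing at frequencies commensurate with the quadrature lattice --- the paper handles exactly this by augmenting $B_1^{\epsilon_1}$ with a point $\tilde{\bm{s}}$ so that $|s_{\nd}|/|\tilde{s}_{\nd}|$ is irrational. However, there is a genuine gap in your final step. You compare the quasi-discrete collocation symbol $\bm{M}^{\epsilon}_C$ with a quasi-discrete Galerkin symbol $\bm{M}^{\epsilon}_G$ and then conclude with $|(i^h(\bm{u}_{\bm{k}}), -\mathcal{L}^S_{\delta,\epsilon} i^h(\bm{u}_{\bm{k}}))| \geq C\|\bm{u}\|^2_{L^2}$. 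That last inequality is the coercivity of the \emph{quasi-discrete} Galerkin bilinear form, which is nowhere available: \cref{thm:nonlocalmapping} (the only coercivity input in the paper, imported from the literature) applies to $\mathcal{L}^S_{\delta}$, not to $\mathcal{L}^S_{\delta,\epsilon}$. Pointwise positive definiteness of $\bm{M}^S_{\delta,\epsilon}(\bm{\xi})$ for $\bm{\xi}\neq \bm{0}$ does not give a uniform $L^2$ lower bound --- the symbol degenerates as $\bm{\xi}\to\bm{0}$, and in the continuous case this is rescued by a nonlocal Poincar\'e inequality on $L^2_c(\Omega_\delta)$ which you would have to re-prove for the finite-sum operator.

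The paper avoids this by proving a different comparison in \cref{lem:DGCF}: it shows $\bm{M}^{\epsilon}_C(\delta,\bm{h},\bm{\xi}) - C\,\bm{M}_C(\delta,\bm{h},\bm{\xi})$ is positive definite, i.e.\ the quasi-discrete collocation symbol dominates the \emph{continuous} collocation symbol. This is done by a two-sided estimate: a lower bound $\bm{M}^{\epsilon}_C \geq C(|\bm{\xi}|/\delta)^2\prod_j h_j\,\bm{I}_{\nd}$ (keeping only the $\bm{r}=\bm{0}$ term and using \cref{eqn:DBoundedMomentFourth2} together with the coupling $\delta = M_0 h_{\max}$, which is where $M_0$ enters), and a matching upper bound on the full aliased series for $\bm{M}_C$ using $1-\cos x\leq x^2$, $\sin x\leq x$ and \cref{eqn:BoundedMoment}. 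One can then chain $\bm{M}^{\epsilon}_C \gtrsim \bm{M}_C \gtrsim \bm{M}_G$ via \cref{lem:GCF}\ref{FGCEquivalent} and land on the continuous Galerkin form, whose coercivity is known. If you want to keep your route, you must either supply a quasi-discrete nonlocal Poincar\'e inequality or redirect your comparison toward the continuous symbols as the paper does.
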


To prove \cref{thm:DStability}, we need the Fourier symbol of the quasi-discrete peridynamic Navier operator $\mathcal{L}^S_{\delta, \epsilon}$, shown in \cref{lem:DFPDsymbol}. We present the lemma without proof because the proof follows similarly as \cref{lem:FPDsymbol} using the fact that the quadrature points are symmetric and the quadrature weights are positive \cite{Leng2019b}.

\begin{lemma} \label{lem:DFPDsymbol}
The Fourier symbol of the quasi-discrete peridynamic Navier operator $\mathcal{L}^S_{\delta, \epsilon}$ is given by
\begin{equation} \label{DFPDNoperator}
-\widehat{\mathcal{L}^S_{\delta, {\epsilon}} \bm{u}}(\bm{\xi}) = \bm{M}^S_{\delta, {\epsilon}}(\bm{\xi}) \widehat{\bm{u}} (\bm{\xi}),
\end{equation}
where the Fourier symbol $\bm{M}^S_{\delta, \epsilon}(\bm{\xi})$ is a $\nd \times \nd$ matrix and can be written as

\begin{equation} \label{eqn:DPDNsymbol}
\bm{M}^S_{\delta, \epsilon}(\bm{\xi}) = \bm{M}^B_{\delta, \epsilon}(\bm{\xi}) + \bm{M}^D_{\delta, \epsilon}(\bm{\xi}),
\end{equation}
where 
\begin{equation} 
\bm{M}^B_{\delta, \epsilon}(\bm{\xi}) = \frac{C_{\mu}}{\delta^2}  p^{\epsilon_1}_{1}(\delta|\bm{\xi}|)\left( \bm{I}_{\nd} - \vec{\bm{\xi}} \vec{\bm{\xi}}^{\,\,T} \right) 
+ \frac{C_{\mu}}{\delta^2}   q^{\epsilon_1}_{1}(\delta|\bm{\xi}|)   \vec{\bm{\xi}} \vec{\bm{\xi}}^{\,\,T} ,
\end{equation}
and 
\begin{equation} 
\bm{M}^D_{\delta, \epsilon}(\bm{\xi}) = \frac{C_{\lambda,\mu}}{\delta^2}  \left(b^{\epsilon_1}_{1}(\delta|\bm{\xi}|) \right)^2 \vec{\bm{\xi}} \vec{\bm{\xi}}^{\,\,T} ,
\end{equation}
where the scalars $p^{\epsilon_1}_1( |\bm{\xi}| ), q^{\epsilon_1}_1(|\bm{\xi}| )$ and $b^{\epsilon_1}_1( |\bm{\xi}| )$ are given as follows
\begin{equation} \label{DscalarP1}
 p^{\epsilon_1}_1(|\bm{\xi}| ) = \sum\limits_{\bm{s} \in B^{\epsilon_1}_{1}}  \omega(|\bm{s}|) \rho( |\bm{s}|) \frac{s^2_1}{|\bm{s}|^2}(1-\textnormal{cos}(|\bm{\xi}|s_{\nd})) , 
\end{equation}
\begin{equation} \label{DscalarQ1}
q^{\epsilon_1}_1(|\bm{\xi}| ) = \sum\limits_{\bm{s} \in B^{\epsilon_1}_{1}}  \omega(|\bm{s}|) \rho( |\bm{s}|) \frac{s^2_{\nd}}{|\bm{s}|^2}(1-\textnormal{cos}(|\bm{\xi}|s_{\nd})), 
\end{equation}
\begin{equation} \label{DscalarB1} 
b^{\epsilon_1}_1(\bm{|\xi|}) = \sum\limits_{\bm{s} \in B^{\epsilon_1}_{1}}  \omega(|\bm{s}|) \rho(|\bm{s}|) s_{\nd} \, \textnormal{sin}( |\bm{\xi}|s_{\nd}) .
\end{equation}
\end{lemma}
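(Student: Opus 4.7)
The plan is to mirror the proof of \cref{lem:FPDsymbol} step by step, replacing every integral over $B_\delta$ by the corresponding finite sum over the quadrature cloud $B^\epsilon_\delta$. Three ingredients carry over unchanged: (i) the translation identity $\widehat{\bm{u}(\cdot+\bm{s})}(\bm{\xi}) = e^{i\bm{s}\cdot\bm{\xi}}\widehat{\bm{u}}(\bm{\xi})$, which turns each summation over shifts into a pointwise multiplier in Fourier space; (ii) the symmetry of $B^\epsilon_\delta$ under $\bm{s}\leftrightarrow -\bm{s}$ together with the evenness of $\omega_\delta(\bm{s})\rho_\delta(|\bm{s}|)$ and of $\bm{s}\otimes\bm{s}$, which kills the imaginary $\sin$-part in the bond-based symbol and, symmetrically, the real $\cos$-part in the divergence/gradient symbols; and (iii) the kernel scaling \cref{eqn:NonlocalKernelScaling} paired with the matching scaling of the weights $\omega_\delta$, which pulls out the $1/\delta^2$ prefactor under a change of summation variable $\bm{s}=\delta\bm{s}'$.

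Concretely, I would first apply the Fourier transform directly to \cref{eqn:DiscreteNonlocalOperator} to obtain
\[
-\widehat{\mathcal{L}^B_{\delta,\epsilon}\bm{u}}(\bm{\xi}) \;=\; \sum_{\bm{s}\in B^\epsilon_\delta}\omega_\delta(\bm{s})\,\rho_\delta(|\bm{s}|)\,\frac{\bm{s}\otimes\bm{s}}{|\bm{s}|^2}\bigl(1-\cos(\bm{s}\cdot\bm{\xi})\bigr)\widehat{\bm{u}}(\bm{\xi}),
\]
after folding the pairs $\pm\bm{s}$. The same fold, applied to \cref{eqn:Ddivergence} and \cref{eqn:Dgradient}, gives Fourier symbols for $\mathcal{D}^\epsilon_\delta$ and $\mathcal{G}^\epsilon_\delta$ that are pure imaginary and proportional, respectively, to $\sum_{\bm{s}}\omega_\delta\rho_\delta\,\bm{s}\sin(\bm{s}\cdot\bm{\xi})$. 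Composing the two yields a rank-one Fourier symbol proportional to the outer product of these vector-sums. A change of variable $\bm{s}=\delta\bm{s}'$, $\bm{s}'\in B^{\epsilon_1}_1$, then rewrites every scalar prefactor in terms of the unit-ball quantities in \cref{DscalarP1,DscalarQ1,DscalarB1} evaluated at $\delta|\bm{\xi}|$, and the material constants $C_\mu$ and $C_{\lambda,\mu}$ emerge from the coefficients in front of $\mathcal{L}^B_{\delta,\epsilon}$ and $\mathcal{G}^\epsilon_\delta\mathcal{D}^\epsilon_\delta$ in \cref{eqn:DPDoperator}.

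The step I expect to require the most care is the tensor decomposition
\[
\sum_{\bm{s}\in B^{\epsilon_1}_1}\omega(|\bm{s}|)\rho(|\bm{s}|)\frac{\bm{s}\otimes\bm{s}}{|\bm{s}|^2}\bigl(1-\cos(\delta|\bm{\xi}|(\bm{s}\cdot\vec{\bm{\xi}}))\bigr) \;=\; p_1^{\epsilon_1}(\delta|\bm{\xi}|)\bigl(\bm{I}_\nd-\vec{\bm{\xi}}\vec{\bm{\xi}}^{\,T}\bigr) + q_1^{\epsilon_1}(\delta|\bm{\xi}|)\,\vec{\bm{\xi}}\vec{\bm{\xi}}^{\,T},
\]
which in the continuous case follows instantly from the rotational invariance of $B_1$. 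Since $B^{\epsilon_1}_1$ lacks full rotational invariance, this identity must be justified using the coordinate reflection and permutation symmetries inherited from the symmetric (Cartesian) placement of the quadrature nodes; these discrete symmetries are precisely what allow us to align $\bm{\xi}$ with $\bm{e}_\nd$ without changing the sums and then read off the coefficients of the transverse and longitudinal projectors as $p_1^{\epsilon_1}$ and $q_1^{\epsilon_1}$. Once this decomposition is in place, all remaining algebra is a direct transcription of the computation underlying \cref{lem:FPDsymbol}.
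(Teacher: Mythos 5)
Your overall strategy --- transcribing the proof of \cref{lem:FPDsymbol} with integrals over $B_{\delta}$ replaced by sums over $B^{\epsilon}_{\delta}$, using the $\pm\bm{s}$ symmetry to separate the cosine and sine parts, and using the matched scalings of $\omega_{\delta}$ and $\rho_{\delta}$ to pull out the $1/\delta^2$ prefactor --- is exactly what the paper intends: it states the lemma without proof, remarking only that it ``follows similarly'' from the symmetry of the quadrature points and the positivity of the weights. Your ingredients (i)--(iii) and the composition of the $\mathcal{G}^{\epsilon}_{\delta}$ and $\mathcal{D}^{\epsilon}_{\delta}$ symbols into a rank-one term are correct and routine.

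The genuine gap is in the step you yourself flag as delicate, and your proposed repair does not close it. The continuous proof obtains the decomposition $p_1(\bm{I}_{\nd}-\vec{\bm{\xi}}\vec{\bm{\xi}}^{\,T})+q_1\vec{\bm{\xi}}\vec{\bm{\xi}}^{\,T}$ via the substitution $\bm{s}\mapsto\bm{\mathcal{R}}\bm{s}$ for the rotation $\bm{\mathcal{R}}$ sending $\bm{\xi}$ to $|\bm{\xi}|\bm{e}_{\nd}$, which is legitimate because $B_{\delta}$ and $\rho_{\delta}$ are invariant under \emph{every} rotation. The discrete cloud $B^{\epsilon}_{\delta}$ is invariant only under $\bm{s}\mapsto-\bm{s}$ and, for Cartesian nodes, under coordinate reflections and permutations; this hyperoctahedral group cannot map a general $\bm{\xi}$ onto a coordinate axis, so the change of variables you invoke ``without changing the sums'' is not available. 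Concretely, in two dimensions with nodes $\{(\pm a,0),(0,\pm a)\}$ and equal weights, the bond symbol $\sum_{\bm{s}}\omega\rho\,|\bm{s}|^{-2}\bm{s}\otimes\bm{s}\,(1-\cos(\bm{s}\cdot\bm{\xi}))$ equals $\operatorname{diag}\bigl(c(1-\cos(a\xi_1)),\,c(1-\cos(a\xi_2))\bigr)$, which is not of the form $p(|\bm{\xi}|)(\bm{I}_2-\vec{\bm{\xi}}\vec{\bm{\xi}}^{\,T})+q(|\bm{\xi}|)\vec{\bm{\xi}}\vec{\bm{\xi}}^{\,T}$ for any scalars depending on $|\bm{\xi}|$ alone. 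Hence the reduction to the direction-independent scalars \cref{DscalarP1,DscalarQ1,DscalarB1} evaluated at $\delta|\bm{\xi}|$ cannot be justified by the discrete symmetries; it requires either additional exactness conditions tying the discrete sums to their rotationally invariant continuous counterparts, or a restatement with direction-dependent coefficients. This defect is inherited from the paper (which omits the proof), but it must be acknowledged or repaired rather than attributed to reflection and permutation symmetry.
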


From \cref{lem:DFPDsymbol}, we have the Fourier representation of the collocation scheme on the quasi-discrete peridynamic Navier operator as follows. 
\begin{lemma} \label{lem:DGCF}
Let $\bm{\widetilde{u}}(\bm{\xi})$ and $\bm{\widetilde{v}}(\bm{\xi})$ be the Fourier series of the sequences $(\bm{u_k}), (\bm{v_k}) \in l^1(\mathbb{Z}^{\nd};\mathbb{C}^{\nd})$ respectively. Then
\begin{equation} \label{eqn:FDCollocation}
((\bm{u_k}), -r^h\mathcal{L}^S_{\delta,{\epsilon}}i^h(\bm{v_k}))_{l^2} = \left(2\pi \right)^{-\nd} \mathlarger{\int}_{\bm{Q}} \bm{\widetilde{u}}(\bm{\xi}) \cdot \bm{M}^{\epsilon}_C(\delta, \bm{h}, \bm{\xi})  \overline{\widetilde{\bm{v}}(\bm{\xi})} d\bm{\xi} , 
\end{equation}
where $\bm{\lambda}^{\epsilon}_C$ are defined as
\begin{equation} \label{eqn:lambdaDC}
\begin{aligned}
\bm{M}^{\epsilon}_C(\delta, \bm{h}, \bm{\xi}) & =  2^{4\nd}\sum_{\bm{r} \in \mathbb{Z}^{\nd}} \bm{M}^B_{\delta, \epsilon}\left((\bm{\xi} + 2 \pi \bm{r}) \oslash \bm{h} \right)  \prod_{j=1}^{\nd}h_j\left(\frac{\sin(\xi_j/2)}{\xi_j+ 2 \pi r_j}\right)^4 \\
& \quad + 2^{4\nd +4}\sum_{\bm{r} \in \mathbb{Z}^{\nd}} \bm{M}^D_{\delta, \epsilon}\left((\bm{\xi} + 2 \pi \bm{r}) \oslash \bm{h} \right)  \prod_{j=1}^{{\nd}}h_j\left(\frac{\sin(\xi_j/2)}{\xi_j+ 2 \pi r_j}\right)^8.
\end{aligned}
%\bm{\lambda}^{\epsilon}_C(\delta, \bm{h}, \bm{\xi}) = 2^{12} \sum_{\bm{r} \in \mathbb{Z}^{3}} \bm{\lambda}^S_{\delta,\epsilon}\left((\bm{\xi} + 2 \pi \bm{r})\oslash \bm{h} \right) \prod_{j=1}^3 h_j \left(\frac{\sin(\xi_j/2)}{\xi_j+  2\pi r_j}\right)^4.
\end{equation}
Moreover, there exists $C>0$, independent of $\delta$ and $\bm{h}$ such that,
\begin{equation} \label{eqn:PDofDCC}
\bm{M}^{\epsilon}_C(\delta, \bm{h}, \bm{\xi}) - C \bm{M}_C(\delta, \bm{h}, \bm{\xi})
\end{equation}
is positive definite for any $\bm{\xi} \neq \bm{0}$.
\end{lemma}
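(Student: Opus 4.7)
The plan is to follow closely the strategy of \cref{lem:GCF}, substituting the quasi-discrete Fourier symbol $\bm{M}^S_{\delta,\epsilon}$ in place of $\bm{M}^S_\delta$ throughout. For the Fourier representation \cref{eqn:FDCollocation}, I would begin from the definition of the $l^2$ sesquilinear form, expand $i^h(\bm{v_k})$ using the RK basis, and note that the Fourier transform of $\Psi_{\bm{k}}$ is a tensor product of factors proportional to $h_j \, 2^4 (\sin(\xi_j/2)/\xi_j)^4 e^{-i\bm{x_k}\cdot\bm{\xi}}$ (coming from the cubic B-spline \cref{eqn:CubicSpline}). By \cref{lem:DFPDsymbol}, applying $-\mathcal{L}^S_{\delta,\epsilon}$ in the Fourier domain amounts to multiplication by $\bm{M}^S_{\delta,\epsilon}=\bm{M}^B_{\delta,\epsilon}+\bm{M}^D_{\delta,\epsilon}$. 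Poisson summation over the grid nodes then periodizes the integral onto $\bm{Q}$, giving the two sums over $\bm{r}\in\mathbb{Z}^\nd$. The two different powers $4$ and $8$ in \cref{eqn:lambdaDC} arise because $\bm{M}^D_{\delta,\epsilon}$ already factors as an outer product of the symbol of $\mathcal{D}^\epsilon_\delta$, and the additional RK projection $\Pi^h$ sitting between $\mathcal{G}^\epsilon_\delta$ and $\mathcal{D}^\epsilon_\delta$ (see the abused notation just after \cref{eqn:DCollocationScheme}) contributes one extra copy of the square of the B-spline symbol, exactly as in the derivation of the corresponding $\bm{M}_C$ term in \cref{eqn:lambdaC}.

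For the positive-definiteness claim \cref{eqn:PDofDCC}, the key reduction is that $\bm{M}^\epsilon_C$ and $\bm{M}_C$ share identical weight factors $\prod_j h_j (\sin(\xi_j/2)/(\xi_j+2\pi r_j))^4$ and $\prod_j h_j (\sin(\xi_j/2)/(\xi_j+2\pi r_j))^8$; only the matrix-valued prefactors $\bm{M}^B_{\delta,\epsilon},\bm{M}^D_{\delta,\epsilon}$ differ from $\bm{M}^B_\delta,\bm{M}^D_\delta$. Hence it suffices to establish the pointwise matrix inequality
\begin{equation*}
\bm{M}^B_{\delta,\epsilon}(\bm{\eta})+\bm{M}^D_{\delta,\epsilon}(\bm{\eta}) \;\succeq\; C\bigl(\bm{M}^B_\delta(\bm{\eta})+\bm{M}^D_\delta(\bm{\eta})\bigr)
\end{equation*}
for every $\bm{\eta}\neq\bm{0}$, uniformly in $\delta\in(0,\delta_0]$. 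Using the explicit forms of both sides from \cref{lem:FPDsymbol} and \cref{lem:DFPDsymbol}, the orthogonal decomposition $\bm{I}_\nd = (\bm{I}_\nd-\vec{\bm{\xi}}\vec{\bm{\xi}}^{\,T})+\vec{\bm{\xi}}\vec{\bm{\xi}}^{\,T}$ reduces this to two scalar inequalities: $p^{\epsilon_1}_1(\tau)\geq C p_1(\tau)$ and $C_\mu q^{\epsilon_1}_1(\tau)+C_{\lambda,\mu}(b^{\epsilon_1}_1(\tau))^2 \geq C\bigl(C_\mu q_1(\tau)+C_{\lambda,\mu}(b_1(\tau))^2\bigr)$, with $\tau=\delta|\bm{\eta}|\geq 0$. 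The assumption $\lambda\geq\mu$, i.e., $C_{\lambda,\mu}\geq 0$, ensures both combined terms are nonnegative, so that the second inequality can be treated by comparing $q^{\epsilon_1}_1$ to $q_1$ and $(b^{\epsilon_1}_1)^2$ to $(b_1)^2$ separately.

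The remaining step is then to prove these scalar equivalences uniformly in $\tau$. For small $\tau$, Taylor expansion of $1-\cos(\tau s_\nd)$ and $\sin(\tau s_\nd)$ together with the quadrature moment identities \cref{eqn:DBoundedMoment} and \cref{eqn:DBoundedMomentFourth2} show that $p^{\epsilon_1}_1(\tau)$, $q^{\epsilon_1}_1(\tau)$, $(b^{\epsilon_1}_1(\tau))^2$ agree with their continuous counterparts to leading order $\tau^2$, so the ratios tend to $1$. For $\tau$ in any bounded range bounded away from $0$, continuity and strict positivity of each quantity give the required equivalence. The main obstacle I anticipate is the large-$\tau$ regime: the integrals $p_1,q_1$ are uniformly bounded (by the Riemann--Lebesgue lemma, for integrable $\rho$) and the quadrature sums $p^{\epsilon_1}_1,q^{\epsilon_1}_1$ are trivially bounded, but a uniform positive lower bound on $p^{\epsilon_1}_1,q^{\epsilon_1}_1$ is nontrivial because a discrete sum of $1-\cos(\tau s_\nd)$ over finitely many quadrature points can become small for exceptional values of $\tau$.

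I would handle this by exploiting the symmetry of $B^{\epsilon_1}_1$ together with the fact that $\epsilon_1$ is a fixed discretization ratio: the quadrature contains points with multiple incommensurable values of $|s_\nd|$, so at least one term $1-\cos(\tau s_\nd)$ remains bounded below by a positive constant for all sufficiently large $\tau$. Combined with the nonnegativity and monotonicity of $\rho$, this yields a uniform positive lower bound. Once these scalar equivalences are established, the matrix inequality, and hence \cref{eqn:PDofDCC}, follow exactly as in \cref{lem:PositiveDefinite}. The stability \cref{thm:DStability} then follows by the same Cauchy--Schwarz chain used in the proof of \cref{thm:stability}.
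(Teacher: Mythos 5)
Your derivation of \cref{eqn:FDCollocation} and \cref{eqn:lambdaDC} matches the paper's (which simply notes that the computation behind \cref{eqn:lambdaC} carries over with $\bm{M}^S_{\delta}$ replaced by $\bm{M}^S_{\delta,\epsilon}$), so that part is fine. The problem is in your treatment of \cref{eqn:PDofDCC}. You reduce the claim to a pointwise symbol inequality at every frequency $\bm{\eta}=(\bm{\xi}+2\pi\bm{r})\oslash\bm{h}$, hence to scalar bounds such as $p^{\epsilon_1}_1(\tau)\geq C\,p_1(\tau)$ uniformly in $\tau=\delta|\bm{\eta}|\geq 0$. This inequality is false in the large-$\tau$ regime, and the incommensurability argument you invoke cannot rescue it: for integrable $\rho$ the integral $p_1(\tau)$ converges to the positive constant $\int_{B_1}\rho(|\bm{s}|)\,s_1^2/|\bm{s}|^2\,d\bm{s}$ by Riemann--Lebesgue, whereas $p^{\epsilon_1}_1(\tau)$ is a finite positive combination of terms $1-\cos(\tau s_{\nd})$, and by simultaneous Diophantine approximation one can choose $\tau\to\infty$ making every $\tau s_{\nd}$ arbitrarily close to a multiple of $2\pi$ at once --- incommensurability of the $|s_{\nd}|$ prevents exact vanishing but not $\liminf_{\tau\to\infty}p^{\epsilon_1}_1(\tau)=0$. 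Since for $\bm{r}\neq\bm{0}$ the arguments $\delta|(\bm{\xi}+2\pi\bm{r})\oslash\bm{h}|$ are of order $M_0|\bm{\xi}+2\pi\bm{r}|$ and hence unbounded, your term-by-term comparison breaks down precisely where you anticipated the obstacle.

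The paper avoids this entirely by not comparing the two series term by term. It lower-bounds $\bm{M}^{\epsilon}_C(\delta,\bm{h},\bm{\xi})$ by keeping only the $\bm{r}=\bm{0}$ contribution of the $\bm{M}^B_{\delta,\epsilon}$ part (all other terms, and the whole $\bm{M}^D_{\delta,\epsilon}$ part, are positive semidefinite under $\lambda\geq\mu$ and may be discarded); because $\delta=M_0h_{\max}$ and $\bm{\xi}\in\bm{Q}$, the argument $\delta|\bm{\xi}\oslash\bm{h}|$ of the quasi-discrete symbol then stays in a compact set, where $1-\cos x\geq Cx^2$ handles $\bm{Q}_1$ and continuity plus nonvanishing (this is where the extra quadrature points with irrational ratio $|s_{\nd}|/|\tilde{s}_{\nd}|$ are actually used) handles the compact set $\bm{Q}_2$, yielding $\bm{M}^{\epsilon}_C\geq C(|\bm{\xi}|/\delta)^2\prod_{j}h_j\,\bm{I}_{\nd}$. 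Separately, the crude bounds $1-\cos x\leq x^2$ and $\sin x\leq x$, valid for all $x$, combined with the decay of the B-spline symbol give the matching upper bound $\bm{M}_C\leq C(|\bm{\xi}|/\delta)^2\prod_{j}h_j\,\bm{I}_{\nd}$ after summing over all $\bm{r}$; combining the two bounds gives \cref{eqn:PDofDCC}. To repair your argument you would need to confine the lower bound to the $\bm{r}=\bm{0}$ term (equivalently, to bounded $\tau$) rather than asserting a pointwise equivalence of the symbols at all frequencies.
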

\begin{proof} 
The derivation of \cref{eqn:lambdaDC} is similar to \cref{eqn:lambdaC}, we can simply replace $\bm{M}^S_{\delta}(\bm{\xi}+ 2 \pi \bm{r})$ with $\bm{M}^S_{\delta, \epsilon}(\bm{\xi}+ 2 \pi \bm{r})$. The challenge is to show that \cref{eqn:PDofDCC} is positive definite. First, we decompose the set $\bm{Q} = (-\pi, \pi)^{\nd}$ into $\bm{Q}_1$ and  $\bm{Q}_2$, i.e.,
\begin{equation} \notag  
\bm{Q}_1 :=\{ \bm{\xi} \in \bm{Q}: \frac{\delta |\bm{\xi}| }{h_{\min}} \leq \pi \} \textnormal{ and } \bm{Q}_2 = \bm{Q} \backslash \bm{Q}_1,
\end{equation} 
thus for $\bm{\xi} \in \bm{Q}_1$,
\[
\left| \delta s_{\nd} (\bm{\xi} \oslash \bm{h}) \right| \leq \frac{\delta |\bm{\xi}|}{h_{\min}} \leq \pi .
\]
We recall that
\begin{equation} \notag 
p^{\epsilon_1}_1(\delta|\bm{\xi} \oslash \bm{h}| ) = \sum\limits_{\bm{s} \in B^{\epsilon_1}_{1}}  \omega(|\bm{s}|) \rho( |\bm{s}|) \frac{s^2_1}{|\bm{s}|^2}\left(1-\textnormal{cos}\left(\frac{\delta s_{\nd}}{h_{\max}}|\bm{\xi} \oslash \bm{\hat{h}}| \right) \right), 
\end{equation}
and for $x \in (-\pi, \pi)$, there is $C >0 $ such that
\[
1-\textnormal{cos}(x) \geq C x^2;
\]
so for $\bm{\xi} \in \bm{Q}_1$, we obtain

\begin{equation} \notag 
p^{\epsilon_1}_1\left(\delta|\bm{\xi} \oslash \bm{h}| \right)\geq C \left(\frac{\delta |\bm{\xi}| }{h_{\max}}\right)^2 \sum\limits_{\bm{s} \in B^{\epsilon_1}_1}  \omega(|\bm{s}|) \rho( |\bm{s}|) \frac{s^2_1 s^2_{\nd}}{|\bm{s}|^2} \geq C^{\epsilon_1}_p |\bm{\xi}|^2, 
\end{equation}
where we have used the fact that $\bm{h}$ is quasi-uniform ($h_{\max}/h_{\min}$ is bounded above and below), 
\[
\sum\limits_{\bm{s} \in B^{\epsilon_1}_1}  \omega(|\bm{s}|)  \rho( |\bm{s}|) \frac{s^2_1 s^2_{\nd}}{|\bm{s}|^2} >0, \]
is bounded below by \cref{eqn:DBoundedMomentFourth2}, and 
$C^{\epsilon_1}_p $ only depends on $M_0$, $\nd$ and $B^{\epsilon_1}_1$. 
It is easy to notice that if $p^{\epsilon_1}_1(\delta|\bm{\xi} \oslash \bm{h}| ) = 0$, then we must have $M_0 s_{\nd}|\bm{\xi} \oslash \bm{\hat{h}}| = 2 k \pi$ for all $\bm{s} \in B^{\epsilon_1}_1$  for some $k \in \mathbb{Z}^+$. If this happens, we can always add more points $\tilde{\bm{s}} $ to $B^{\epsilon_1}_1$ such that for a certain point $\bm{s}$ in the original set $B^{\epsilon_1}_1$,
$|s_{\nd}|/|\tilde{s_{\nd}}|$  is an irrational number and thus $M_0 \tilde{s_{\nd}}|\bm{\xi} \oslash \bm{\hat{h}}| \neq 2 k \pi$ for any $k \in \mathbb{Z}^+$.
As a consequence, we can choose the set $B_1^{\epsilon_1}$ such that $p^{\epsilon_1}_1(\delta|\bm{\xi} \oslash \bm{h}| )$ is nonzero and $p^{\epsilon_1}_1\left(\delta |\bm{\xi}\oslash \bm{h}| \right) \geq C \geq |\bm{\xi}|^2$ for $\bm{\xi} \in \bm{Q}_2$ because $\bm{Q}_2$ is compact.
Moreover, for $\bm{\xi} \in \bm{Q}$, it is true that
\[
C_1 < \left(\frac{\textnormal{sin}(\xi_j/2)}{\xi_j} \right)^{4} < C_2,
\]
where $C_1, C_2 > 0$ are generic constants.
Therefore, we have for $\bm{\xi} \in \bm{Q}$,

\begin{equation} \label{eqn:Dp1bound}
p^{\epsilon_1}_1\left(\delta \left|\bm{\xi}  \oslash \bm{h} \right| \right) \prod_{j=1}^{{\nd}} h_j \left(\frac{\sin(\xi_j/2)}{\xi_j+  2\pi r_j}\right)^4 
\geq C^{\epsilon_1}_p|\bm{\xi}|^2\prod_{j=1}^{{\nd}} h_j \, . 
\end{equation}
%\tcg{[We may just delete \cref{eqn:Dp1bound} and change \cref{eqn:Dq1bound} to the estimate just involving $q^{\epsilon_1}_1(\delta |\left(\bm{\xi}\oslash \bm{h}\right))|$. Because the definition of the symbol has matrices involving tensor project of the vector $\left(\bm{\xi} + 2\pi \bm{r}\right) \oslash \bm{h}$ where you forgot to write. Finally just add a sentence like by ingoring the $\bm{r}\neq \bm{0}$ in the summation we obtain \cref{eqn:DCFS}.]}
Similarly, we can obtain
\begin{equation} \label{eqn:Dq1bound}
 q^{\epsilon_1}_1\left(\delta |\bm{\xi} \oslash \bm{h}| \right)\prod_{j=1}^{{\nd}} h_j \left(\frac{\sin(\xi_j/2)}{\xi_j+  2\pi r_j}\right)^4  \geq C^{\epsilon_1}_q|\bm{\xi}|^2\prod_{j=1}^{{\nd}} h_j,  
\end{equation}
where $C^{\epsilon_1}_q > 0$ is a generic constant.
Combining \cref{eqn:Dp1bound,eqn:Dq1bound}, we have the following bound, for $\bm{\xi} \in \bm{Q}$,
\begin{equation} \label{eqn:DCFS}
\begin{aligned}
\bm{M}^{\epsilon}_C(\delta, \bm{h}, \bm{\xi}) & \geq C_{\mu} \left(\frac{|\bm{\xi}|}{\delta}\right)^2  \left\{  C^{\epsilon_1}_p  \left( \bm{I}_{\nd} - \vec{\bm{\xi}}_{\bm{h}} \vec{\bm{\xi}}_{\bm{h}}^{\,\,T} \right)   +  C^{\epsilon_1}_q   \vec{\bm{\xi}}_{\bm{h}} \vec{\bm{\xi}}_{\bm{h}}^{\,\,T} \right\}\prod_{j=1}^{{\nd}} h_j \\
& \geq \min \{ C^{\epsilon_1}_p, C^{\epsilon_1}_q \} C_{\mu}  \left(\frac{|\bm{\xi}|}{\delta}\right)^2 \prod_{j=1}^{{\nd}} h_j \, \bm{I}_{\nd} \geq C \left(\frac{|\bm{\xi}|}{\delta}\right)^2 \prod_{j=1}^{\nd} h_j \, \bm{I}_{\nd},
\end{aligned}
\end{equation}
where $\bm{\xi}_{\bm{h}}= \bm{\xi} \oslash \bm{h}$ and we have ignored the terms for $\bm{r} \neq \bm{0}$ because they are non-negative and positive definite.

Next, we use the fact that 
\[
1-\textnormal{cos}(x) \leq x^2 \textnormal{ and } \sin(x) \leq x, \quad \textnormal{for } x\geq 0,
\]
to obtain, for any $\bm{r} \in \mathbb{Z}^{\nd}$,
\begin{equation} \notag 
p_1 \left(\delta |\left(\bm{\xi} + 2\pi \bm{r}\right) \oslash \bm{h}| \right)  \leq  \left(\frac{\delta |\bm{\xi} + 2\pi \bm{r}| }{h_{\max}}\right)^2 \int_{B_{1}} \rho( |\bm{s}|) \frac{s^2_1 s^2_{\nd}}{|\bm{s}|^2}  d\bm{s} \leq C |\bm{\xi} + 2\pi \bm{r}|^2 ,
\end{equation}
and
\[ 
q_1 \left(\delta |\left(\bm{\xi} + 2\pi \bm{r}\right) \oslash \bm{h}| \right)  
%\leq  \left(\frac{\delta |\bm{\xi} + 2\pi \bm{r}| }{h_{\max}}\right)^2  \int_{B_{1}} \rho( |\bm{s}|) \frac{ s^4_{\nd}}{|\bm{s}|^2}  d\bm{s} 
\leq C |\bm{\xi} + 2\pi \bm{r}|^2 , \quad 
b_1\left(\delta |\left(\bm{\xi} + 2\pi \bm{r}\right) \oslash \bm{h}| \right)  
%\leq  \frac{\delta |\bm{\xi} + 2\pi \bm{r}|}{h_{\max}} \int_{B_{1}} \rho(|\bm{s}|) |s_{\nd}|^2 \, d\bm{s} 
\leq C |\bm{\xi}+ 2\pi \bm{r}|,
\]
where we have used \cref{eqn:BoundedMoment}.
% \[ \int_{B_{1}} \rho( |\bm{s}|) \frac{s^2_1 s^2_{\nd}}{|\bm{s}|^2}  d\bm{s}, \quad \int_{B_{1}} \rho( |\bm{s}|) \frac{ s^4_{\nd}}{|\bm{s}|^2}  d\bm{s}, \quad  \int_{B_{1}} \rho(|\bm{s}|) |s_{\nd}|^2 \, d\bm{s}  > 0. \]
Hence we obtain
\begin{equation} \label{eqn:p1bound}
\begin{aligned}
& p_1\left(\delta |\left(\bm{\xi} + 2\pi \bm{r}\right) \oslash \bm{h}| \right) \prod_{j=1}^{\nd} h_j \left(\frac{\sin(\xi_j/2)}{\xi_j+  2\pi r_j}\right)^4 \\
& \leq C  |\bm{\xi}+2\pi\bm{r})|^{2}\left(\frac{\textnormal{sin}(\bm{\xi}/2)}{\bm{\xi} + 2 \pi \bm{r}} \right)^{4} \prod_{j=1}^{\nd} h_j \leq  C_p  \frac{|\bm{\xi}|^2}{ |\bm{\xi}_{\bm{r}}|^2} \prod_{j=1}^{\nd} h_j, \\
\end{aligned}
\end{equation}
where $\bm{\xi}_{\bm{r}}=\bm{\xi}+2\pi\bm{r} $ and $C_p$ is a generic constant. Similarly,
\begin{equation} \label{eqn:q1bound} 
 q_1\left(\delta |\left(\bm{\xi} + 2\pi \bm{r}\right) \oslash \bm{h}| \right)\prod_{j=1}^{\nd} h_j \left(\frac{\sin(\xi_j/2)}{\xi_j+  2\pi r_j}\right)^4  
\leq  C_q \frac{|\bm{\xi}|^2}{ |\bm{\xi}_{\bm{r}}|^2} \prod_{j=1}^{\nd} h_j 
\end{equation}
and 
\begin{equation} \label{eqn:b1bound}
\left[ b_1\left(\delta |\left(\bm{\xi} + 2\pi \bm{r}\right) \oslash \bm{h}| \right) \right]^2 \prod_{j=1}^{\nd} h_j \left(\frac{\sin(\xi_j/2)}{\xi_j+  2\pi r_j}\right)^8  
\leq  C_b \frac{|\bm{\xi}|^2}{ |\bm{\xi}_{\bm{r}}|^2} \prod_{j=1}^{\nd} h_j,
\end{equation}
where $C_q, C_b > 0$. By gathering \cref{eqn:p1bound,eqn:q1bound,eqn:b1bound}, we have 
%and letting $C_p C_{\mu} = C_q C_{\mu} + C_b C_{\lambda, \mu}$ \tcg{[Here to get the inequality, we can just use the fact $\vec{\bm{\xi}}\otimes \vec{\bm{\xi}} \leq I$ for any unit vector $\vec{\bm{\xi}}$.]}, we arrive at
\begin{equation} \label{eqn:CFS}
\begin{aligned}
&  \bm{M}_C(\delta, \bm{h}, \bm{\xi}) \\ 
& \leq  \left(\frac{|\bm{\xi}|}{\delta}\right)^2  \mathlarger{\sum}_{\bm{r}\in \mathbb{Z}^{\nd}} \frac{C_p C_{\mu}  \left( \bm{I}_{\nd} - \vec{\bm{\xi}}_{\bm{h},\bm{r}} \vec{\bm{\xi}}_{\bm{h,r}}^{\,\,T} \right) +  C_q  C_{\mu}  \vec{\bm{\xi}}_{\bm{h},\bm{r}} \vec{\bm{\xi}}_{\bm{h},\bm{r}}^{\,\,T} +  C_b C_{\lambda, \mu} \vec{\bm{\xi}}_{\bm{h},\bm{r}} \vec{\bm{\xi}}_{\bm{h},\bm{r}}^{\,\,T}}{|\bm{\xi}_{\bm{r}}|^2} \prod_{j=1}^{\nd} h_j, \\
& \leq  C\left(\frac{|\bm{\xi}|}{\delta}\right)^2 \prod_{j=1}^{\nd} h_j \, \bm{I}_{\nd},
\end{aligned}
\end{equation}
where $\bm{\xi}_{\bm{h},\bm{r}}=({\bm{\xi}+2\pi\bm{r}}) \oslash \bm{h}$. 

Finally, \cref{eqn:PDofDCC} is shown by combing \cref{eqn:DCFS,eqn:CFS}. 
\end{proof}

\begin{proof}[Proof of \cref{thm:DStability}] 
By applying \cref{lem:DGCF}, the proof follows similarly to the proof of \cref{thm:stability}.
\end{proof}

\subsection{Consistency of the RK collocation on the quasi-discrete peridynamic Navier equation}

%%%%%%%%%%%%%%%%%%%%%%%%%%%%%%%%%%%%%%%%%
\begin{comment}
\begin{lemma} \label{lem:DQuadraticExactness}
\textbf{(Quadratic Exactness II)} For quadratic polynomials in $\mathbb{R}^{\nd}$ as $\bm{u}(\bm{x}) = (\bm{x}^{\bm{\alpha}}, \bm{x}^{\bm{\beta}}, \bm{x}^{\bm{\gamma}})$, $|\bm{\alpha}|=|\bm{\beta}|=|\bm{\gamma}| = 2$, we have 
\begin{equation} \notag 
\sum_{|\bm{\alpha}|=2} \mathcal{T}^{\epsilon}_{\delta} \bm{u} = \sum_{|\bm{\alpha}|=2} \mathcal{T}_{\delta} \bm{u} = m(\bm{x}),
\end{equation}
where the operator $\mathcal{T}_{\delta}$ is $\mathcal{L}_{\delta}$ or $\mathcal{D}_{\delta}$ and $\mathcal{T}^{\epsilon}_{\delta}$ is the quasi-discrete version of $\mathcal{T}_{\delta}$. 
\end{lemma}
\begin{proof} The result is a direct consequence of \cref{eqn:BoundedMoment} and \cref{eqn:DBoundedMoment}. 
\end{proof}
\end{comment}
%%%%%%%%%%%%%%%%%%%%%%%%%%%%%%%%%%%%%%%%%

Before showing the discrete model error between $\mathcal{L}^S_{\delta, \epsilon}$ and $\mathcal{L}^S_0$, we need the truncation error between $\mathcal{L}^S_{\delta}$ and $\mathcal{L}^S_{\delta, \epsilon}$. 
\begin{lemma} \label{lem:DiffernceNonlocal}
Assume $\bm{u} \in C^4(\mathbb{R}^{\nd};\mathbb{R}^{\nd})$, then for $i= 1, \ldots, \nd$,
\[
\left| \left[\mathcal{L}^S_{\delta, \epsilon} \bm{u} - \mathcal{L}^S_{\delta} \bm{u}\right]_i \right| \leq C\delta^2 \left|\bm{u}^{(4)}\right|_{\infty}.
\]
\end{lemma}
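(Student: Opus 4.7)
The plan is to decompose $\mathcal{L}^S_{\delta,\epsilon}\bm{u} - \mathcal{L}^S_{\delta}\bm{u}$ according to \cref{eqn:PDoperator} and \cref{eqn:DPDoperator} into the bond-based piece $\mathcal{L}^B_{\delta,\epsilon}\bm{u} - \mathcal{L}^B_{\delta}\bm{u}$ and the gradient-divergence piece $\mathcal{G}^{\epsilon}_{\delta}\mathcal{D}^{\epsilon}_{\delta}\bm{u} - \mathcal{G}_{\delta}\mathcal{D}_{\delta}\bm{u}$, and bound each by $C\delta^2|\bm{u}^{(4)}|_\infty$. The unified strategy is a quadrature-error analysis via Taylor expansion: by symmetry of $B_\delta$ and $B^{\epsilon}_\delta$, every odd-total-degree monomial integrand vanishes in both the integral and the discrete sum; the non-vanishing leading polynomial contributions are matched exactly either by \cref{eqn:DWeightedVolume} (second moments) or by the quadratic exactness \cref{eqn:DQuadraticExactness}, equivalently \cref{eqn:DBoundedMomentFourth2} (fourth moments); and the fourth-order Taylor remainder, when multiplied by $\rho_\delta$ and integrated or summed over a ball of radius $\delta$, scales like $\delta^2$ by the kernel scaling \cref{eqn:NonlocalKernelScaling}.

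For the bond part I would Taylor expand each $u_j$ at $\bm{x}$ through order three with remainder $R_j(\bm{x},\bm{s})$ satisfying $|R_j|\le C|\bm{s}|^4|\bm{u}^{(4)}|_\infty$. Multiplying $u_j(\bm{x}+\bm{s})-u_j(\bm{x})$ by $\rho_\delta\frac{s_is_j}{|\bm{s}|^2}$, the Taylor terms of order one and three give odd-total-degree integrands (since $\frac{s_is_j}{|\bm{s}|^2}$ is even) and cancel in both $\int_{B_\delta}$ and $\sum_{B^\epsilon_\delta}$; the order-two terms match between integral and quadrature by \cref{eqn:DBoundedMomentFourth2}; and the remainder contributes at most $C|\bm{u}^{(4)}|_\infty \int_{B_\delta}\rho_\delta|\bm{s}|^4\,d\bm{s} = O(\delta^2|\bm{u}^{(4)}|_\infty)$ in both, yielding $|\mathcal{L}^B_{\delta,\epsilon}\bm{u} - \mathcal{L}^B_\delta \bm{u}|_\infty \le C\delta^2|\bm{u}^{(4)}|_\infty$.

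For the gradient-divergence piece I would split $\mathcal{G}^{\epsilon}_{\delta}\mathcal{D}^{\epsilon}_{\delta}\bm{u} - \mathcal{G}_{\delta}\mathcal{D}_{\delta}\bm{u} = (\mathcal{G}^{\epsilon}_{\delta} - \mathcal{G}_{\delta})\mathcal{D}_{\delta}\bm{u} + \mathcal{G}^{\epsilon}_{\delta}(\mathcal{D}^{\epsilon}_{\delta}\bm{u} - \mathcal{D}_{\delta}\bm{u})$. Writing $\theta := \mathcal{D}_\delta\bm{u}\in C^3$ (since $\bm{u}\in C^4$), with $|\theta^{(3)}|_\infty\le C|\bm{u}^{(4)}|_\infty$ by the computation in \cref{eqn:ThetaThirdError}, I would Taylor expand $\theta$ at $\bm{x}$ to order two: the linear Taylor term, multiplied by $t_i$, matches exactly between $\mathcal{G}_\delta$ and $\mathcal{G}^\epsilon_\delta$ by the second-moment identity derived from \cref{eqn:DWeightedVolume} and symmetry; the quadratic term vanishes by parity; and the cubic remainder contributes at most $C\delta^2|\theta^{(3)}|_\infty\le C\delta^2|\bm{u}^{(4)}|_\infty$.

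The main obstacle is the second term, $\mathcal{G}^{\epsilon}_{\delta}f$ with $f := \mathcal{D}^{\epsilon}_{\delta}\bm{u} - \mathcal{D}_{\delta}\bm{u}$, because a naive Lipschitz estimate yields only $O(\delta)$. To obtain the sharp $O(\delta^2)$ rate I would Taylor expand $f$ itself around $\bm{x}$ through order two. The order-one term, after multiplication by $t_i$ and quadrature, collapses to $\partial_i f(\bm{x})$ by the second-moment identity; using that the integral and quadrature operators commute with partial differentiation on smooth functions, $\nabla f = (\mathcal{D}^{\epsilon}_{\delta} - \mathcal{D}_{\delta})(\nabla\bm{u})$, and applying the analogous divergence-difference estimate on $\nabla\bm{u}\in C^3$ (which follows from the same Taylor argument already outlined for $\mathcal{D}_\delta$), this leading piece is already $O(\delta^2|\bm{u}^{(4)}|_\infty)$. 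The order-two term in $f$ produces an odd-degree integrand and cancels by symmetry, while the cubic remainder contributes $C\delta^2|\nabla^3 f|_\infty$; here $\nabla^3 f = (\mathcal{D}^{\epsilon}_{\delta} - \mathcal{D}_{\delta})(\nabla^3\bm{u})$ is merely uniformly bounded by $C|\bm{u}^{(4)}|_\infty$, since $\nabla^3\bm{u}\in C^1$ and $|\mathcal{D}_\delta g|_\infty \le C|g'|_\infty$ for $g\in C^1$ by the bound $\int_{B_\delta}\rho_\delta|\bm{s}|^2 d\bm{s}=\nd$. This still yields the desired $O(\delta^2|\bm{u}^{(4)}|_\infty)$ bound for $\mathcal{G}^\epsilon_\delta f$, and summing the bond and gradient-divergence contributions completes the proof.
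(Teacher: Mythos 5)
Your proposal is correct and follows essentially the same strategy as the paper's proof: fourth-order Taylor expansion, cancellation of odd-degree moments by the symmetry of $B_\delta$ and $B^{\epsilon}_{\delta}$, exact matching of the surviving even moments via \cref{eqn:DWeightedVolume} and \cref{eqn:DBoundedMomentFourth2}, and the $O(\delta^2)$ scaling of the fourth-moment remainder. The only difference is bookkeeping in the composition term: you add and subtract $\mathcal{G}^{\epsilon}_{\delta}\mathcal{D}_{\delta}\bm{u}$, whereas the paper expands both $\mathcal{D}^{\epsilon}_{\delta}\bm{u}$ and $\mathcal{D}_{\delta}\bm{u}$ around their common leading term $\sum_j \partial_j u_j$ (cf.\ \cref{eqn:ExpandDDilatation,eqn:ExpandDilatation}) and estimates the two remainders directly in \cref{eqn:DDGCompositionTruncation}; both routes reduce to the same estimates.
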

\begin{proof}
Using Taylor's theorem, for $\bm{x} \in \mathbb{R}^{\nd}$, $j = 1, \ldots, \nd$, and $\bm{s} \in B_{\delta}$ we have
\begin{equation}  \label{eqn:ThirdOrderRemainder}
u_j(\bm{x}+\bm{s}) - u_j(\bm{x})  = \sum_{|\bm{\alpha}|=1,2}\bm{s}^{\bm{\alpha}} \frac{D^{\bm{\alpha}}u_j(\bm{x})}{\bm{\alpha}!} + \sum_{|\bm{\beta}|=3} \bm{s}^{\bm{\beta}} \frac{R_j^{\bm{\beta}}(\bm{y})}{\bm{\beta}!} \, .
\end{equation}
and 
\begin{equation} \label{eqn:Remainder}
u_j(\bm{x}+\bm{s}) + u_j(\bm{x}-\bm{s}) - 2u_j(\bm{x})  = 2\sum_{|\bm{\alpha}|=2}\bm{s}^{\bm{\alpha}} \frac{D^{\bm{\alpha}}u_j(\bm{x})}{\bm{\alpha}!} + \sum_{|\bm{\beta|=4}} \bm{s}^{\bm{\beta}} \frac{R_j^{\bm{\beta}}(\bm{y})}{\bm{\beta}!}\,,
\end{equation}
where $|R_j^{\bm{\beta}}(\bm{y})|\leq C |u_j^{(4)}|_\infty$ and $\bm{y}$ depends on $\bm{x}$ and $\bm{s}$. First,  we study the truncation error between $\mathcal{L}^B_{\delta} \bm{u}$ and $\mathcal{L}^B_{\delta, \epsilon} \bm{u} $, for $i=1, \ldots, \nd$,

\begin{equation} \label{eqn:DBondbasedTruncation}
\begin{aligned}
&\left| [\mathcal{L}^B_{\delta, \epsilon} \bm{u}(\bm{x}) - \mathcal{L}^B_{\delta} \bm{u}(\bm{x})]_i \right| \\
& =  \Bigg\rvert \sum^{\nd}_{j=1}   \sum_{|\bm{\alpha}|=2}  \frac{D^{\alpha}u_j(\bm{x})}{\bm{\alpha}!}\left( \sum_{\bm{s} \in B^{\epsilon}_{\delta}} \omega_{\delta}(|\bm{s}|) \rho_{\delta}(|\bm{s}|) \frac{s_i s_j}{|\bm{s}|^2}\bm{s^{\alpha}}  - \int_{B_{\delta}} \rho_{\delta}(|\bm{s}|) \frac{s_i s_j}{|\bm{s}|^2}\bm{s^{\alpha}} d\bm{s} \right) \\
&  + \sum_{|\bm{\beta}|=4} \frac{1}{2\bm{\beta}!}  \left(\sum_{\bm{s} \in B^{\epsilon}_{\delta}} \omega_{\delta}(|\bm{s}|) \rho_{\delta}(|\bm{s}|) \frac{s_i s_j}{|\bm{s}|^2}\bm{s^{\beta}} R^{\bm{\beta}}_j(\bm{y}) - \int_{B_{\delta}} \rho_{\delta}(|\bm{s}|) \frac{s_i s_j}{|\bm{s}|^2}\bm{s^{\beta}} R^{\bm{\beta}}_j(\bm{y}) d\bm{s} \right) \Bigg\rvert \\
& \leq 0 + \left| \bm{u}^{(4)} \right|_{\infty}\sum^{\nd}_{j=1} \sum_{|\bm{\beta}|=4}  \left(\sum_{\bm{s} \in B^{\epsilon}_{\delta}} \omega_{\delta}(|\bm{s}|) \rho_{\delta}(|\bm{s}|) \frac{|s_i s_j|}{|\bm{s}|^2} |\bm{s}|^{\bm{\beta}}  +   \int_{ B_{\delta}} \rho_{\delta}(|\bm{s}|) \frac{|s_i s_j|}{|\bm{s}|^2} |\bm{s}|^{\bm{\beta}}  d\bm{s}  \right) ,\\
& \leq C \delta^2  \left| \bm{u}^{(4)} \right|_{\infty},
\end{aligned}
\end{equation}
where we have used \cref{eqn:WeightedVolume,eqn:DBoundedMoment,eqn:DBoundedMomentFourth2,eqn:Remainder}.

Next, via \cref{eqn:ThirdOrderRemainder}, the quasi-discrete nonlocal divergence operator $\mathcal{D}^{\epsilon}_{\delta}$ acting on $\bm{u}$ can be written as
\begin{equation} \label{eqn:ExpandDDilatation}  
\begin{aligned}
%\frac{m(\bm{x})}{3} \theta^{\epsilon}(\bm{x})& =\mathcal{D}^{\epsilon}_{\delta} \bm{u}(\bm{x}) \\
\mathcal{D}^{\epsilon}_{\delta} \bm{u}(\bm{x}) & = \sum^{\nd}_{j=1} \sum_{|\bm{\alpha}|=1,2}  \frac{D^{\bm{\alpha}}u_j(\bm{x})}{\bm{\alpha}!}  \sum_{\bm{s} \in B^{\epsilon}_{\delta}} \omega_{\delta}(|\bm{s}|) \rho_{\delta}(|\bm{s}|)s_j \bm{s^{\alpha}}   \\
& \quad +  \sum^{\nd}_{j=1} \sum_{|\bm{\beta}|=3}  \frac{1}{\bm{\beta}!}  \sum_{\bm{s} \in B^{\epsilon}_{\delta}} \omega_{\delta}(|\bm{s}|) \rho_{\delta}(|\bm{s}|)s_j \bm{s^{\beta}} R^{\bm{\beta}}_j(\bm{y}),  \\
&= \sum_{j=1}^{\nd} u'_j(\bm{x}) +  \sum^{\nd}_{j=1} \sum_{|\bm{\beta}|=3}  \frac{1}{\bm{\beta}!}  \sum_{\bm{s} \in B^{\epsilon}_{\delta}} \omega_{\delta}(|\bm{s}|) \rho_{\delta}(|\bm{s}|)s_j \bm{s^{\beta}} R^{\bm{\beta}}_j(\bm{y}),
%& \leq C  \sum^3_{n=1} \big| D^{\bm{\alpha}}R_n(\bm{x}) \big| \left( \sum_{\bm{s} \in B^{\epsilon}_{\delta}} \omega_{\delta}(\bm{s}) \rho_{\delta}(|\bm{s}|)|s_n||\bm{s}| \right), \\
%& \leq C m(\bm{x}) \sum^3_{n=1} \left| D^{\bm{\alpha}}R_n(\bm{x}) \right|,
\end{aligned}
\end{equation}
where we denote $u_j'(\bm{x}) = \displaystyle{\frac{\nd u_j(\bm{x}) }{\nd{x_j}}}$ and we have used \cref{eqn:DWeightedVolume}.  We immediately have a similar result for $\mathcal{D}_{\delta}$, 
\begin{equation} \label{eqn:ExpandDilatation}  
\mathcal{D}_{\delta} \bm{u}(\bm{x}) =  \sum_{j=1}^{\nd} u'_j(\bm{x}) +  \sum^{\nd}_{j=1} \sum_{|\bm{\beta}|=3}  \frac{1}{\bm{\beta}!}  \int_{ B_{\delta}} \rho_{\delta}(|\bm{s}|)s_j \bm{s^{\beta}} R^{\bm{\beta}}_j(\bm{y})d\bm{s}.
\end{equation}

%%%%%%%%%%%%%%%%%%%%%%%%%%%%%%%%%  comment out these equations  %%%%%%%%%%%%%%%
\begin{comment}
\begin{equation}
\begin{aligned}
%& \frac{m(\bm{x})}{3} \left( \theta^{\epsilon}(\bm{x}+\bm{t})-\theta^{\epsilon}(\bm{x}) \right) = \frac{m(\bm{x})}{3}\sum_{n=1}^3 \left( \sum_{|\bm{\alpha}|=1,2}\bm{t}^{\bm{\alpha}}\frac{D^{\bm{\alpha}} (u'_n)(\bm{x})}{\bm{\alpha}!} + \sum_{|\bm{\gamma}|=3}\bm{t}^{\bm{\gamma}}\frac{R_n^{\bm{\gamma}}(\bm{z})}{\bm{\gamma}!} \right) \\
 \mathcal{D}_{\delta}^{\epsilon}\bm{u} (\bm{x}+\bm{t})-\mathcal{D}_{\delta}^{\epsilon}\bm{u} (\bm{x}) & = \frac{m(\bm{x})}{3}\sum_{n=1}^3 \left( \sum_{|\bm{\alpha}|=1,2}\bm{t}^{\bm{\alpha}}\frac{D^{\bm{\alpha}} (u'_n)(\bm{x})}{\bm{\alpha}!} + \sum_{|\bm{\gamma}|=3}\bm{t}^{\bm{\gamma}}\frac{R_n^{\bm{\gamma}}(\bm{z})}{\bm{\gamma}!} \right) \\
&\quad + \sum^3_{n=1} \sum_{|\bm{\beta}|=3}  \frac{1}{\bm{\beta}!}  \sum_{\bm{s} \in B^{\epsilon}_{\delta}} \omega_{\delta}(\bm{s}) \rho_{\delta}(|\bm{s}|)s_n \bm{s^{\beta}} \left(R^{\bm{\beta}}_n(\bm{y}+\bm{t}) -R^{\bm{\beta}}_n(\bm{y})\right),
\end{aligned}
\end{equation}

\begin{equation}
\begin{aligned}
\mathcal{D}_{\delta}\bm{u}(\bm{x}+\bm{t})-\mathcal{D}_{\delta}\bm{u}(\bm{x}) &= \frac{m(\bm{x})}{3}\sum_{n=1}^3 \left( \sum_{|\bm{\alpha}|=1,2}\bm{t}^{\bm{\alpha}}\frac{D^{\bm{\alpha}} (u'_n)(\bm{x})}{\bm{\alpha}!} + \sum_{|\bm{\gamma}|=3}\bm{t}^{\bm{\gamma}}\frac{R_n^{\bm{\gamma}}(\bm{z})}{\bm{\gamma}!} \right) \\
&\quad + \sum^3_{n=1} \sum_{|\bm{\beta}|=3}  \frac{1}{\bm{\beta}!}  \int_{\bm{s} \in B_{\delta}} \rho_{\delta}(|\bm{s}|)s_n \bm{s^{\beta}} \left(R^{\bm{\beta}}_n(\bm{y}+\bm{t}) -R^{\bm{\beta}}_n(\bm{y})\right) d{\bm{s}} ,
\end{aligned}
\end{equation}
\end{comment}
%%%%%%%%%%%%%%%%%%%%%%%%%%%% end of comment   %%%%%%%%%%%%%%%%%%%%%

Then, the truncation error between $\mathcal{G}^{\epsilon}_{\delta} \mathcal{D}^{\epsilon}_{\delta}\bm{u} $ and $\mathcal{G}_{\delta} \mathcal{D}_{\delta}\bm{u}$ is given by

\begin{equation} \label{eqn:DDGCompositionTruncation}  
\begin{aligned}
& \left| [ \mathcal{G}^{\epsilon}_{\delta} \mathcal{D}^{\epsilon}_{\delta}\bm{u} - \mathcal{G}_{\delta} \mathcal{D}_{\delta}\bm{u} ]_i (\bm{x}) \right| \\
& = \Bigg\rvert  \sum^{\nd}_{j=1} \sum_{|\bm{\alpha}|=1,2} \frac{D^{\bm{\alpha}} (u'_j)(\bm{x})}{\bm{\alpha}!} \left( \sum_{\bm{t} \in B^{\epsilon}_{\delta}} \omega_{\delta}(|\bm{t}|) \rho_{\delta}(|\bm{t}|)t_i \bm{t}^{\bm{\alpha}} - \int_{ B_{\delta}}  \rho_{\delta}(|\bm{t}|)t_i \bm{t}^{\bm{\alpha}} d\bm{t}\right)\\
& \quad +   \sum^{\nd}_{j=1} \sum_{|\bm{\gamma}|=3} \frac{1}{\bm{\gamma}!} \left( \sum_{\bm{t} \in B^{\epsilon}_{\delta}} \omega_{\delta}(|\bm{t}|) \rho_{\delta}(|\bm{t}|)t_i \bm{t^{\gamma}} \tilde{R}_j^{\bm{\gamma}}(\bm{z}) - \int_{ B_{\delta}}  \rho_{\delta}(|\bm{t}|)t_i \bm{t^{\gamma}} \tilde{R}_j^{\bm{\gamma}}(\bm{z})  d\bm{t} \right)  \\
& \quad + \sum^{\nd}_{j=1} \sum_{|\bm{\beta}|=3}  \frac{1}{\bm{\beta}!} \sum_{ \bm{t} \in B^{\epsilon}_{\delta}} \omega_{\delta}(|\bm{t}|) \rho_{\delta}(|\bm{t}|)t_i  \sum_{\bm{s} \in B^{\epsilon}_{\delta}} \omega_{\delta}(|\bm{s}|) \rho_{\delta}(|\bm{s}|)s_j \bm{s^{\beta}} \left(R^{\bm{\beta}}_j(\bm{y}+\bm{t}) -R^{\bm{\beta}}_j(\bm{y})\right) \\
%& \quad - \frac{m(\bm{x})}{3} \left( \sum^3_{n=1} \sum_{|\bm{\alpha}|=1,2} \frac{D^{\bm{\alpha}} (u'_n)(\bm{x})}{\bm{\alpha}!} \int_{ B_{\delta}}  \rho_{\delta}(|\bm{t}|)t_m \bm{t}^{\bm{\alpha}} d\bm{t} \right)\\
%& \quad - \frac{m(\bm{x})}{3} \left( \sum^3_{n=1} \sum_{|\bm{\gamma}|=3} \frac{1}{\bm{\gamma}!} \int_{ B_{\delta}}  \rho_{\delta}(|\bm{t}|)t_m \bm{t^{\gamma}} R_n^{\bm{\gamma}}(\bm{z})  d\bm{t} \right)\\
& \quad - \sum^{\nd}_{j=1} \sum_{|\bm{\beta}|=3}  \frac{1}{\bm{\beta}!} \int_{ B_{\delta}} \rho_{\delta}(|\bm{t}|)t_i  \int_{B_{\delta}} \rho_{\delta}(|\bm{s}|)s_j \bm{s^{\beta}} \left(R^{\bm{\beta}}_j(\bm{y}+\bm{t}) -R^{\bm{\beta}}_j(\bm{y})\right) d\bm{s}d\bm{t} \Bigg\rvert, \\
&\leq 0 + C  \left(\sum^{\nd}_{j=1} \sum_{|\bm{\gamma}|=3}  \frac{\left|\bm{u}^{(4)}\right|_{\infty}}{\bm{\gamma}!} \sum_{\bm{t} \in B^{\epsilon}_{\delta}} \omega_{\delta}(|\bm{t}|) \rho_{\delta}(|\bm{t}|)|t_i| |\bm{t^{\gamma}}| + \int_{B_{\delta}} \rho_{\delta}(|\bm{t}|)|t_i| |\bm{t^{\gamma}}| d\bm{t} \right)  \\
& \quad + \sum^{\nd}_{j=1} \sum_{|\bm{\beta}|=3}  \frac{\left|\bm{u}^{(4)}\right|_{\infty}}{\bm{\beta}!} \sum_{|\alpha|=1} \sum_{ \bm{t} \in B^{\epsilon}_{\delta}} \omega_{\delta}(|\bm{t}|) \rho_{\delta}(|\bm{t}|) |t_i||\bm{t^{\alpha}}|  \sum_{\bm{s} \in B^{\epsilon}_{\delta}} \omega_{\delta}(|\bm{s}|) \rho_{\delta}(|\bm{s}|)|s_j| \left|\bm{s^{\beta}}\right| \\
& \quad + \sum^{\nd}_{j=1} \sum_{|\bm{\beta}|=3}  \frac{\left|\bm{u}^{(4)}\right|_{\infty}}{\bm{\beta}!} \sum_{|\alpha|=1} \int_{ B_{\delta}} \omega_{\delta}(|\bm{t}|) \rho_{\delta}(|\bm{t}|) |t_i||\bm{t^{\alpha}}| d\bm{t} \int_{B_{\delta}} \omega_{\delta}(|\bm{s}|) \rho_{\delta}(|\bm{s}|)|s_j| \left|\bm{s^{\beta}}\right| d\bm{s}\\
& \leq C \delta^2  \left|\bm{u}^{(4)}\right|_{\infty}. 
\end{aligned}
\end{equation}
where $\tilde{R}_j$ is the remainder by expanding $u'_j$ as  \cref{eqn:ThirdOrderRemainder} and  we have used \cref{eqn:ExpandDDilatation,eqn:ExpandDilatation}. 

\Cref{eqn:DBondbasedTruncation,eqn:DDGCompositionTruncation} together complete the proof.
\end{proof}  
 
Now, we present the discrete model error between the quasi-discrete nonlocal peridynamic Navier equation and its local limit.
\begin{lemma} \label{lem:DConsistency}
\textbf{(Asymptotic consistency II)} Assume $\bm{u} \in C^4(\mathbb{R}^{\nd};\mathbb{R}^{\nd})$, then 
\begin{equation} \notag 
| r^h_{\Omega} \mathcal{L}^S_{\delta, \epsilon}\Pi^h\bm{u} - r^h_{\Omega} \mathcal{L}^S_{0} \bm{u}^0 |_{h} \leq C  \left|\bm{u}^{(4)}\right|_{\infty} (h_{\max}^2 + \delta^2).
\end{equation}
\end{lemma}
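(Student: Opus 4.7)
My plan is to reduce the statement to two already-established building blocks: the asymptotic consistency between the continuous nonlocal operator and its local limit (\cref{lem:DiscreteME}) and the pointwise bound on the quasi-discrete truncation error (\cref{lem:DiffernceNonlocal}). First I would apply the triangle inequality
\begin{equation*}
|r^h_\Omega \mathcal{L}^S_{\delta,\epsilon}\Pi^h\bm{u} - r^h_\Omega \mathcal{L}^S_0 \bm{u}|_h \leq |r^h_\Omega(\mathcal{L}^S_{\delta,\epsilon} - \mathcal{L}^S_\delta)\Pi^h\bm{u}|_h + |r^h_\Omega \mathcal{L}^S_\delta \Pi^h\bm{u} - r^h_\Omega \mathcal{L}^S_0 \bm{u}|_h .
\end{equation*}
The second summand is controlled by $C(h_{\max}^2+\delta^2)|\bm{u}^{(4)}|_\infty$ directly by \cref{lem:DiscreteME}. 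For the first summand, the naive idea of invoking \cref{lem:DiffernceNonlocal} with $\Pi^h\bm{u}$ in place of $\bm{u}$ fails, because $\Pi^h\bm{u}$ has only piecewise $C^2$ regularity, so $(\Pi^h\bm{u})^{(4)}$ is not bounded pointwise.

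To circumvent this I would split $\Pi^h\bm{u}=\bm{u}+\bm{E}$ with $\bm{E}:=\Pi^h\bm{u}-\bm{u}$ and write
\begin{equation*}
(\mathcal{L}^S_{\delta,\epsilon}-\mathcal{L}^S_\delta)\Pi^h\bm{u} = (\mathcal{L}^S_{\delta,\epsilon}-\mathcal{L}^S_\delta)\bm{u} + (\mathcal{L}^S_{\delta,\epsilon}-\mathcal{L}^S_\delta)\bm{E} .
\end{equation*}
The first piece is bounded by $C\delta^2|\bm{u}^{(4)}|_\infty$ via \cref{lem:DiffernceNonlocal} applied to the $C^4$ field $\bm{u}$. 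For the second piece I would bound $|r^h_\Omega \mathcal{L}^S_{\delta,\epsilon}\bm{E}|_h$ and $|r^h_\Omega\mathcal{L}^S_\delta \bm{E}|_h$ separately by $Ch_{\max}^2|\bm{u}^{(4)}|_\infty$: the latter is precisely \cref{lem:consistency}, while the former is obtained by repeating the argument of that lemma with the integral replaced by a symmetric sum over $B^\epsilon_\delta$. Concretely, using the symmetry of the quadrature set, the bond-based contribution can be rewritten as a discrete second difference
\begin{equation*}
\mathcal{L}^B_{\delta,\epsilon}\bm{E}(\bm{x}_k) = \tfrac{1}{2}\sum_{\bm{s}\in B^\epsilon_\delta}\omega_\delta(\bm{s})\rho_\delta(|\bm{s}|)\frac{\bm{s}\otimes\bm{s}}{|\bm{s}|^2}\bigl(\bm{E}(\bm{x}_k+\bm{s})+\bm{E}(\bm{x}_k-\bm{s})-2\bm{E}(\bm{x}_k)\bigr),
\end{equation*}
and \cref{lem:synchronizedconvergence} provides $|\bm{E}(\bm{x}_k+\bm{s})+\bm{E}(\bm{x}_k-\bm{s})-2\bm{E}(\bm{x}_k)|\leq C|\bm{s}|^2 h_{\max}^2 |\bm{u}^{(4)}|_\infty$; summing against the quadrature weights and invoking the discrete moment identity \cref{eqn:DWeightedVolume} closes this step. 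The gradient--divergence piece $\mathcal{G}^\epsilon_\delta\Pi^h\mathcal{D}^\epsilon_\delta\bm{E}$ is then handled exactly as in the second half of the proof of \cref{lem:consistency}: split the inner quantity into $\Pi^h\mathcal{D}^\epsilon_\delta\bm{E}$ plus the dilatation-interpolation error $\Pi^h\theta^\epsilon-\theta^\epsilon$, observe that $\theta^\epsilon$ is a finite linear combination of translates of $\bm{u}$ and therefore inherits $\bm{u}$'s $C^4$ regularity with derivatives bounded by $|\bm{u}^{(4)}|_\infty$, and conclude via synchronized convergence.

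The hard part will be the estimate on $(\mathcal{L}^S_{\delta,\epsilon}-\mathcal{L}^S_\delta)\bm{E}$: because $\bm{E}$ carries only the $C^2$ regularity inherited from the cubic B-spline basis, \cref{lem:DiffernceNonlocal} cannot be applied to it and no third- or fourth-order Taylor expansion of $\bm{E}$ is available. The resolution is to exploit the symmetry of both the continuous kernel and the quadrature set $B^\epsilon_\delta$, which converts the antisymmetric first differences $\bm{E}(\bm{x}_k+\bm{s})-\bm{E}(\bm{x}_k)$ into second differences requiring only the second-order synchronized convergence bound. Once this discrete second-difference trick is in place, assembling the three pieces (model error from \cref{lem:DiscreteME}, quasi-discrete error on $\bm{u}$ from \cref{lem:DiffernceNonlocal}, and interpolation-error contribution controlled as above) yields the claimed estimate $C|\bm{u}^{(4)}|_\infty(h_{\max}^2+\delta^2)$.
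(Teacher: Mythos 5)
Your proof is correct and follows essentially the same route as the paper: both reduce the estimate to the quasi-discrete interpolation consistency bound $|r^h\mathcal{L}^S_{\delta,\epsilon}\Pi^h\bm{u}-r^h\mathcal{L}^S_{\delta,\epsilon}\bm{u}|_h\leq Ch_{\max}^2|\bm{u}^{(4)}|_{\infty}$ (obtained by rerunning \cref{lem:consistency} with sums in place of integrals and the same symmetric second-difference trick), the quadrature error $|\mathcal{L}^S_{\delta,\epsilon}\bm{u}-\mathcal{L}^S_{\delta}\bm{u}|\leq C\delta^2|\bm{u}^{(4)}|_{\infty}$ from \cref{lem:DiffernceNonlocal}, and the nonlocal-to-local model error. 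The only difference is the order in which you apply the triangle inequality (peeling off $(\mathcal{L}^S_{\delta,\epsilon}-\mathcal{L}^S_{\delta})\Pi^h\bm{u}$ first and invoking \cref{lem:consistency} an extra time), which changes nothing substantive.
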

\begin{proof}
In order to prove this lemma, we need the following intermediate result
\begin{equation} \label{eqn:DiscreteOnRK}
\left| r^h \mathcal{L}^S_{\delta,\epsilon} \Pi^h{\bm{u}} - r^h \mathcal{L}^S_{\delta,\epsilon} \bm{u} \right|_{h} \leq C h_{\max}^2 \left|\bm{u}^{(4)}\right|_{\infty}.
\end{equation}
The proof of \cref{eqn:DiscreteOnRK} is similar to \cref{lem:consistency}, following the replacement of the nonlocal operators with their quasi-discrete counterparts.
By collecting \cref{eqn:DiscreteOnRK,lem:DiffernceNonlocal}, the discrete model error of collocation scheme \cref{eqn:DCollocationScheme} is given as
\begin{align*}
\left|r^h \mathcal{L}^S_{\delta,\epsilon} \Pi^h\bm{u} - r^h \mathcal{L}^S_{0} \bm{u} \right|_{h} &\leq \left| r^h \mathcal{L}^S_{\delta,\epsilon} \Pi^h\bm{u} - r^h \mathcal{L}^S_{\delta,\epsilon} \bm{u}  \right|_{h}  + \left| r^h \mathcal{L}^S_{\delta,\epsilon}  \bm{u} - r^h \mathcal{L}^S_{\delta} \bm{u} \right|_{h} \\
& \quad + \left| r^h \mathcal{L}^S_{\delta}  \bm{u}- r^h \mathcal{L}^S_{0} \bm{u} \right|_{h}, \\ 
&\leq C \left|\bm{u}^{(4)}\right|_{\infty} (h_{\max}^2 +\delta^2 + \delta^2).
\end{align*}
\end{proof}

Combining \cref{thm:DStability,lem:DConsistency}, we follow similar procedure as the proof \cref{thm:convergencetononlocal} and show that the numerical solution of \cref{eqn:DCollocationScheme}  converges to its local limit. 

\begin{thm}  \label{thm:QAC}
Assume the local exact solution $\bm{u}^0$ is sufficiently smooth, i.e., $\bm{u}^0 \in C^4(\overline{ \Omega};\mathbb{R}^{\nd})$. For any $\delta \in(0, \delta_0]$, let $\bm{u}^{\delta, \epsilon, h}$ be the numerical solution of the collocation scheme \cref{eqn:DCollocationScheme} and fix the ratio between $\delta$ and $h_{\max}$. Then,
\begin{equation} \notag 
\| \bm{u}^0 - \bm{u}^{\delta, \epsilon, h} \|_{L^2(\Omega; \, \mathbb{R}^{\nd})} \leq C (h_{\max}^2 + \delta^2).
\end{equation}
\end{thm}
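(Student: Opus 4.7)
The proof will follow the same template as the proof of Theorem~\ref{thm:convergencetononlocal}, combining the stability estimate from Theorem~\ref{thm:DStability} with the asymptotic consistency estimate from Lemma~\ref{lem:DConsistency}, and absorbing the interpolation error by the triangle inequality. The plan is to show first that $\Pi^h \bm{u}^0$ is close to $\bm{u}^{\delta,\epsilon,h}$ in $L^2$, and then transfer to $\bm{u}^0$ by using the standard estimate $\|\bm{u}^0 - \Pi^h \bm{u}^0\|_{L^2(\Omega;\R^\nd)} \leq C h_{\max}^2 |{\bm{u}^0}^{(2)}|_\infty$.

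First, I would extend $\bm{u}^0$ by zero from $\overline{\Omega}$ to all of $\R^\nd$, which is possible since $\bm{u}^0 = \bm{0}$ on $\partial\Omega$; the regularity $\bm{u}^0 \in C^4(\overline{\Omega};\R^\nd)$ then transfers (after possibly a smooth extension step) to a function in $C^4(\R^\nd;\R^\nd)$ that may be used in Lemma~\ref{lem:DConsistency}. Writing the local limit equation \eqref{eqn:LocalEqn} and the collocation scheme \eqref{eqn:DCollocationScheme} both against $r^h_\Omega \bm{f}$, I obtain the identity
\[
-r^h_\Omega \mathcal{L}^S_{\delta,\epsilon} \bm{u}^{\delta,\epsilon,h} \;=\; r^h_\Omega \bm{f} \;=\; -r^h_\Omega \mathcal{L}^S_0 \bm{u}^0 .
\]

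Next, I apply Theorem~\ref{thm:DStability} to the element $\Pi^h \bm{u}^0 - \bm{u}^{\delta,\epsilon,h} \in S(\square\cap\Omega;\R^\nd)$ to get
\[
\|\Pi^h \bm{u}^0 - \bm{u}^{\delta,\epsilon,h}\|_{L^2(\R^\nd;\R^\nd)} \;\leq\; C\,\bigl| r^h_\Omega \mathcal{L}^S_{\delta,\epsilon}\bigl(\Pi^h \bm{u}^0 - \bm{u}^{\delta,\epsilon,h}\bigr)\bigr|_h .
\]
Inserting the identity above on the right-hand side replaces $\mathcal{L}^S_{\delta,\epsilon}\bm{u}^{\delta,\epsilon,h}$ with $\mathcal{L}^S_0 \bm{u}^0$, yielding
\[
\|\Pi^h \bm{u}^0 - \bm{u}^{\delta,\epsilon,h}\|_{L^2(\R^\nd;\R^\nd)} \;\leq\; C\,\bigl| r^h_\Omega \mathcal{L}^S_{\delta,\epsilon}\Pi^h \bm{u}^0 - r^h_\Omega \mathcal{L}^S_0 \bm{u}^0\bigr|_h ,
\]
which Lemma~\ref{lem:DConsistency} bounds by $C\,|{\bm{u}^0}^{(4)}|_\infty (h_{\max}^2 + \delta^2)$. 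The triangle inequality
\[
\|\bm{u}^0 - \bm{u}^{\delta,\epsilon,h}\|_{L^2(\Omega;\R^\nd)} \leq \|\bm{u}^0 - \Pi^h \bm{u}^0\|_{L^2(\Omega;\R^\nd)} + \|\Pi^h \bm{u}^0 - \bm{u}^{\delta,\epsilon,h}\|_{L^2(\Omega;\R^\nd)}
\]
combined with Lemma~\ref{lem:synchronizedconvergence} on the first term (applied componentwise) then delivers the claimed $O(h_{\max}^2 + \delta^2)$ rate.

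There is no real obstacle here: with the stability bound of Theorem~\ref{thm:DStability} and the uniform asymptotic consistency bound of Lemma~\ref{lem:DConsistency} both in hand, the argument is mechanical. The one point that requires a sentence of care is the zero extension of $\bm{u}^0$ and the invocation of Lemma~\ref{lem:DConsistency}, whose hypothesis demands $C^4(\R^\nd;\R^\nd)$ regularity; since only a neighborhood of $\overline{\Omega}$ enters the quasi-discrete stencil (the support of the kernel is $B_\delta$ with $\delta = M_0 h_{\max}$), the zero extension across $\partial\Omega$ causes no loss because $\bm{u}^0$ vanishes on $\partial\Omega$ together with derivatives are not required to match. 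The constraint $\delta = M_0 h_{\max}$ enters only implicitly, through the constants in Theorem~\ref{thm:DStability}; it is not needed in the consistency step, which is already formulated uniformly in $\delta$ and $h_{\max}$.
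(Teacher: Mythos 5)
Your proposal is correct and follows essentially the same route as the paper: the paper likewise combines the stability bound of \cref{thm:DStability} with the asymptotic consistency bound of \cref{lem:DConsistency} via the identity $-r^h_\Omega \mathcal{L}^S_{\delta,\epsilon}\bm{u}^{\delta,\epsilon,h} = r^h_\Omega \bm{f} = -r^h_\Omega \mathcal{L}^S_0 \bm{u}^0$, applies stability to $\Pi^h\bm{u}^0 - \bm{u}^{\delta,\epsilon,h}$, and closes with the triangle inequality and the interpolation estimate, exactly mirroring the proof of \cref{thm:convergencetononlocal}. No gaps.
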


\begin{comment}
\begin{proof}
First, recall the collocation equation \cref{eqn:DCollocationScheme} and its local limit \cref{eqn:LocalEqn},
\[ 
- r^h_{\Omega}\mathcal{L}^S_{\delta, \epsilon} \bm{u}^{\delta, \epsilon, h} = r^h_{\Omega} \bm{f} = - r^h_{\Omega}\mathcal{L}^S_{0} \bm{u}^{0} \, .
%- p_{\Omega}\Pi^h \mathcal{L}_{\delta} u^{\delta,h} = p_{\Omega} \Pi^h f = - p_{\Omega} \Pi^h \mathcal{L}_{0} u^{0} \, .
\]
Using \cref{thm:DStability}, \cref{lem:DConsistency}, and the above equation, we obtain
\begin{align*}
\| \Pi^h \bm{u}^{0}  - \bm{u}^{\delta, \epsilon, h}  \|_{(L^2(\Omega))^3} &    \leq  C \left| r^h_{\Omega} \mathcal{L}^S_{\delta, \epsilon} \left(\Pi^h \bm{u}^{0} - \bm{u}^{\delta, \epsilon, h}  \right)  \right|_{h}, \\
& \leq C \left| r^h_{\Omega} \mathcal{L}^S_{\delta, \epsilon} \Pi^h \bm{u}^{0} - r^h_{\Omega} \mathcal{L}^S_{\delta, \epsilon} \bm{u}^{\delta, \epsilon, h}  \right|_{h},\\
& \leq C \left| r^h_{\Omega} \mathcal{L}^S_{\delta, \epsilon} \Pi^h \bm{u}^{0} - r^h_{\Omega} \mathcal{L}^S_{0} \bm{u}^{0}  \right|_{h},\\
& \leq C  h_{\max}^2 \, .
\end{align*}
The proof is complete by applying the triangle inequality
\[ 
\|\bm{u}^{0}  - \bm{u}^{\delta, \epsilon, h}  \|_{L^2(\Omega)} \leq \| \bm{u}^{0}  - \Pi^h \bm{u}^{0}  \|_{L^2(\Omega)}  +  \| \Pi^h \bm{u}^{0}  - \bm{u}^{\delta, \epsilon, h}  \|_{(L^2(\Omega))^3} \leq C (h_{\max}^2 + \delta^2).
\]
\end{proof}
\end{comment}

\section{Numerical example} \label{sec:NumericalExample}
In this section, we validate the convergence analysis in the previous sections by considering a numerical example in two dimension. We let the discretization parameter be $h_1 = 2h_2$ so the collocation grid has $h_{\textnormal{max}} = h_1$. Choosing the manufactured solution $\bm{u}(x_1,x_2) =[ x_1^2(1-x_1)^2+x_2^2(1-x_2)^2, \allowbreak 0 ]^T$, we obtain the right-hand side of \cref{eqn:NonlocalEqn,eqn:LocalEqn} as
\[
\bm{f}_\delta(\bm{x}) = \bm{f}_0(\bm{x}) -
\left[
\displaystyle \frac{18 \lambda }{5}\delta^2, \, 0 \right]^T 
\]
where
\[
\bm{f}_0(\bm{x}) =-\left[ 2\lambda(1-6x_1+6x_2^2)+6\mu(1-4x_1+4x_1^2-2x_2+2x_2^2), \, 0 \right]^T.
\]
We impose the corresponding values of $\bm{u}(\bm{x})$ on $\Omega_{\cI}$ such that the exact value to the local limit matches on $\partial \Omega$. The nonlocal kernel is chosen as $\displaystyle \rho_{\delta}(|\bm{s}|) = \frac{3}{2\pi \delta^3 |\bm{s}|}$, and let $\Omega =  (0,1)^2$, $E=1$ and $\nu =0.4$. Therefore, the Lam\'{e} parameters $ \displaystyle \lambda=E \nu/((1+\nu)(1-2\nu))$ and $\displaystyle \mu = E/(2(1+\nu))$ satisfy the assumption in \cref{lem:PositiveDefinite}. For a fixed $\delta$, we solve the following peridynamic Navier equation
\begin{equation}  \label{eqn:NLEx}
\begin{cases}
-\mathcal{L}^S_{\delta} \bm{u}(\bm{x}) = \bm{f}_\delta(\bm{x}), & \bm{x} \in \Omega, \\
\bm{u}(\bm{x}) = \left[ x_1^2(1-x_1^2)+x_2^2(1-x_2^2), 0 \right]^T, & \bm{x} \in \Omega_{\cI}\,.
\end{cases}
\end{equation}
When $\delta$ goes to zero,
we substitute $\bm{f}_\delta$ with $\bm{f}_0$ in \cref{eqn:NLEx},
and solve the following nonlocal problem 
\begin{equation}  \label{eqn:NTLEx}
\begin{cases}
-\mathcal{L}^S_{\delta} \bm{u}(\bm{x}) = \bm{f}_0(\bm{x}), & \bm{x} \in \Omega, \\
\bm{u}(\bm{x}) = \left[ x_1^2(1-x_1^2)+x_2^2(1-x_2^2), 0 \right]^T, & \bm{x} \in \Omega_{\cI},
\end{cases}
\end{equation}
which converges to the local limit
\begin{equation} \notag  
\begin{cases}
-\mathcal{L}^S_{0} \bm{u}(\bm{x}) = \bm{f}_0(\bm{x}), & \bm{x} \in \Omega, \\
 \quad \quad \bm{u}(\bm{x}) = \bm{0}, & \bm{x} \in  \partial \Omega.
\end{cases}
\end{equation}
We apply the two collocation schemes as \cref{eqn:CollocationScheme,eqn:DCollocationScheme} and investigate their convergence properties.

\subsection{RK collocation}
We first use the scheme as described in \cref{eqn:CollocationScheme} to solve \cref{eqn:NLEx} for a fixed $\delta$ and investigate the convergence property to the nonlocal limit. Then we study the convergence of the numerical solution to the local limit by solving \cref{eqn:NTLEx} and letting $\delta$ go to zero.

\Cref{fig:convergence} shows the convergence profiles.  When $\delta$ is fixed, the numerical solution converges to the nonlocal solution at a second-order convergence rate. Then we couple $\delta$ with $h_{\max}$  by letting both $\delta$ and $h_{\textnormal{max}}$ go to zero but at different rates, numerical solutions converge to the local limit. Second-order convergence rates are observed when $\delta$ goes to zero faster ($\delta = h_{\textnormal{max}}^2$) and at the same rate as $h_{\textnormal{max}}$ ($\delta = h_{\textnormal{max}}$).  We only obtain a first-order convergence rate when $\delta=\sqrt{h_{\textnormal{max}}}$. The convergence behaviour agrees with \cref{thm:convergencetononlocal} and \cref{thm:AC} and the numerical examples have verified that the RK collocation method is an AC scheme.

\begin{figure}[H]%htb
\centering
\scalebox{0.5}{\includegraphics{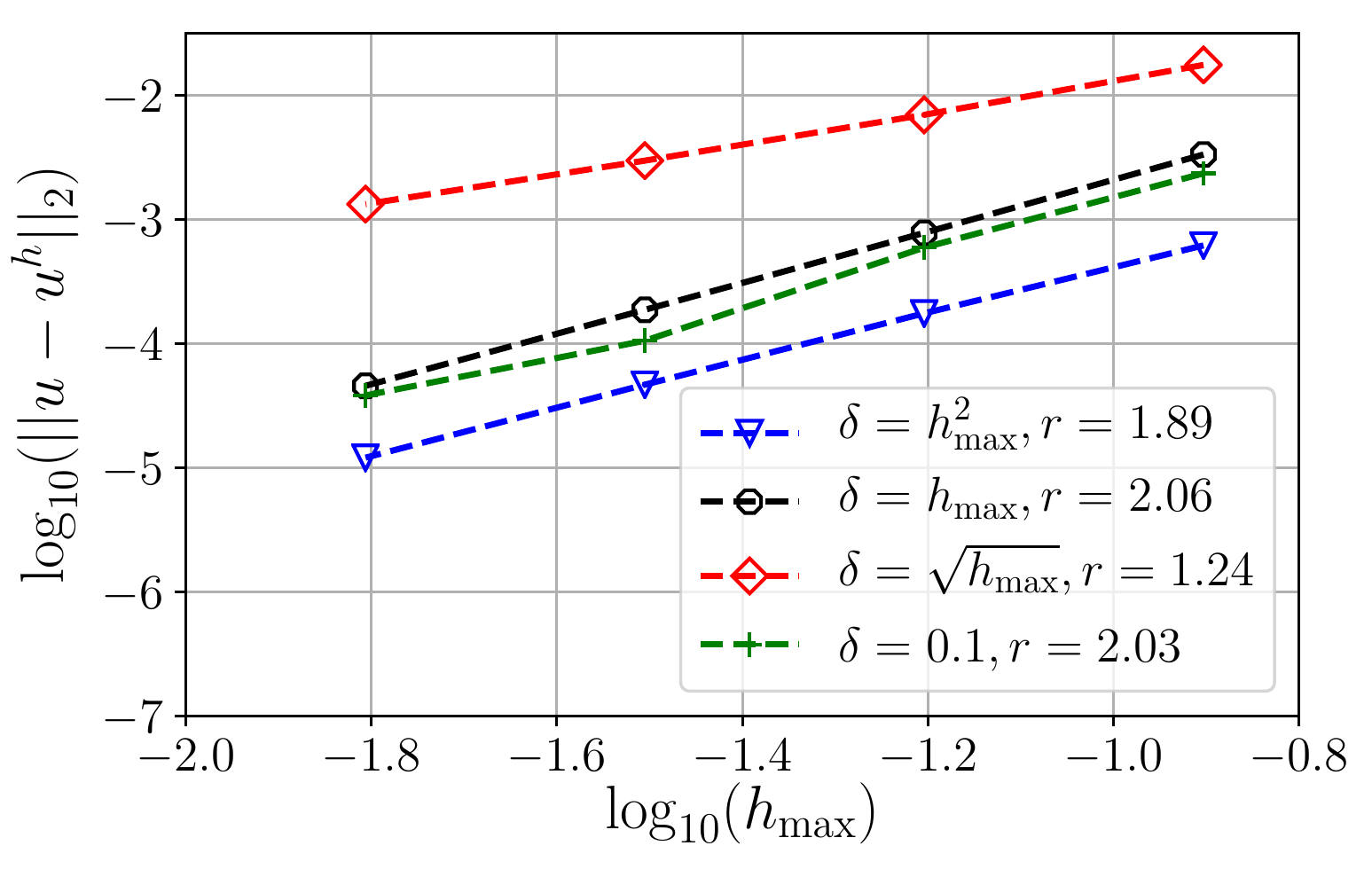}}
\caption{Convergence profiles using the RK collocation method. }
\label{fig:convergence}
\end{figure}

\subsection{RK collocation on quasi-discrete peridynamic Navier equation}
To avoid the need of using high-order Gauss quadrature rules, we have reformulated the peridynamic Navier equation in \cref{subsec:quasi-nonlocal}, using quasi-discrete nonlocal operators. It is also more practical to couple the horizon with grid size as $\delta=M_0 h_{\max}$ because this leads to banded linear systems amenable to traditional preconditioning techniques.  Now, we use the RK collocation method on the quasi-discrete peridynamic Navier equation as discussed in \cref{eqn:DCollocationScheme} to solve \cref{eqn:NTLEx} and study the convergence to the local limit because $\delta$ and $h_{\max}$ approach to $0$ at the same rate. \Cref{fig:MFconvergence} presents the convergence profiles and second-order convergence rates are observed. The numerical findings agree with our analysis in \cref{thm:QAC} and verify that the RK collocation on quasi-discrete peridynamic Navier equation converges to the correct local limit. 

\begin{figure}[H]%htb
\centering
\scalebox{0.5}{\includegraphics{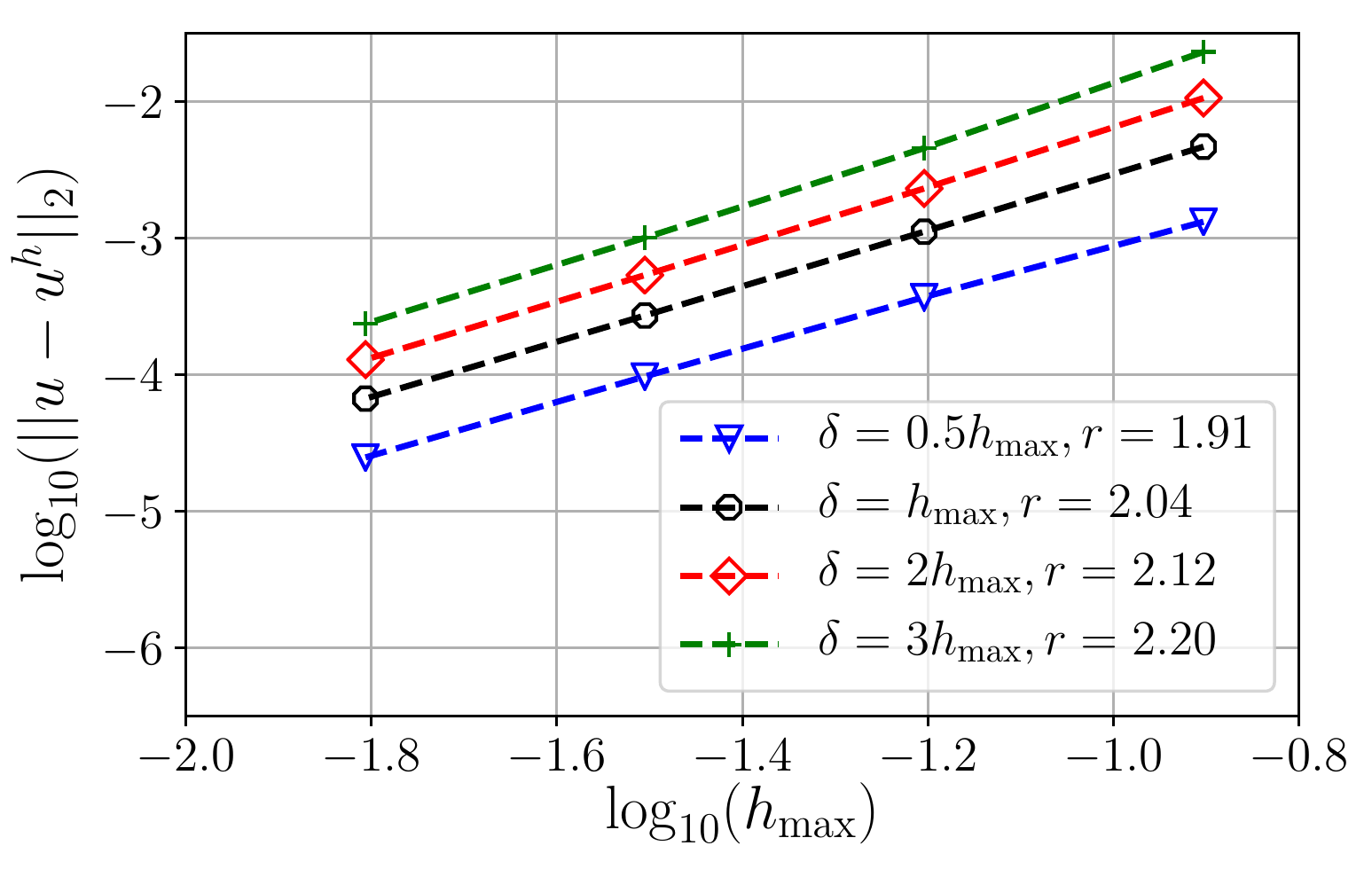}}
\caption{Convergence profiles using the RK collocation on quasi-discrete peridynamic Navier equation. }
\label{fig:MFconvergence}
\end{figure}

\section{Conclusion} \label{sec:Conclusion}
In this work, we have extended a previously developed linear RK collocation method to the peridynamic Navier equation. We first apply linear RK approximation to both the displacements and dilatation, then back-substitute dilatation into the equation, and solve it in a pure displacement form.
Numerical solutions of the method converge to both the nonlocal solution when $\delta$ is fixed and its local limit when $\delta$ vanishes; convergence analysis of this scheme is presented in the case of Cartesian grids with varying resolution in each dimension. Because the standard Galerkin scheme has been proven to be stable, the key idea of analyzing the stability of the collocation scheme was to establish a relationship between the two schemes. When proving stability, in order to avoid constraining the nonlocal kernel, we also assume the material parameters satisfy $\lambda \geq \mu$, and our analysis is applicable for materials with Poisson ratio between $[0.25, 0.5)$.

Then, we formulated the quasi-discrete version of the peridynamics Navier equation using the quasi-discrete nonlocal operators which were proposed in \cite{Leng2019b}. The key was to replace the integral with a finite number of symmetric quadrature points in the horizon with carefully designated quadrature weights satisfying polynomial reproducing conditions for a given nonlocal (even singular) kernel.
Under the assumption that the quadrature points are symmetrically distributed and that the quadrature weights are positive, we have shown the stability of the RK collocation method on the quasi-discrete peridynamics Navier equation.  
The numerical solution of the RK collocation method applied to the quasi-discrete peridynamic Navier equation converges to the correct local limit. 

We have faced two main challenges in this work, comparing to the previous work in \cite{Leng2019b}. The first challenge is the derivation of the Fourier symbol. The Fourier symbol of the peridynamic Navier operator is a matrix and consists of two parts, while the Fourier symbol of the nonlocal diffusion is a scalar; more involved derivations are done for the Fourier representations of the collocation schemes of the peridynamic Navier operator and its quasi-discrete counterpart. The other challenge is the design of the quadrature weights for the quasi-discrete nonlocal operators. A reformulation of the bounded second-order moment condition is required to guarantee consistency.

In addition, we have conducted numerical examples in two dimension to complement our mathematical analysis and observed the same order of convergence as in our theoretical results. That is, for the RK collocation method, the numerical solution converges to the nonlocal solution for a fixed $\delta$ and its local limit independent of the coupling of $\delta$ and discretization parameter $h_{\max}$; for the RK collocation method on the quasi-discrete peridynamic Navier equation, the numerical solution converges to the correct local limit when the ratio $\delta/h_{\max}$ is fixed. 

Finally, we remark that this is the second work of meshfree methods for nonlocal models. Some interesting topics remain to be addressed. For classical (local) linear elasticity, FEM solution obtained from the pure displacement form often deteriorates and becomes unstable when $\nu $ is close to 0.5. For the peridynamic Navier equation, however, numerical results in \cite{Trask2019state} show that the meshfree discretization converges to the local limit with a second-order convergence rate even for $\nu = 0.495$. It is a challenging question to answer, but nevertheless worthwhile, to ask why the peridynamic Navier equation does not have an instability? Moreover, our analysis is limited on rectilinear Cartesian grids but rigorous analysis on a more general grid, such as quasi-uniform grid, should also be studied in the near future.

\section*{Acknowledgements}
The research of Yu Leng and John T. Foster is supported in part by the AFOSR MURI Center for Material Failure Prediction through Peridynamics (AFOSR Grant NO. FA9550-14-1-0073) and the SNL:LDRD academic alliance program. The work of Xiaochuan Tian is supported in part by NSF  grant DMS-1819233. Nathaniel Trask also acknowledges funding under the DOE ASCR  PhILMS center (Grant number DE-SC001924) and the Laboratory Directed Research and Development program at Sandia National Laboratories. Sandia National Laboratories is a multi-program laboratory managed and operated by National Technology and Engineering Solutions of Sandia, LLC., a wholly owned subsidiary of Honeywell International, Inc., for the U.S. Department of Energy’s National Nuclear Security Administration under contract DE-NA-0003525.

The Oden Institute is acknowledged for its support. The authors also thank Leszek Demkowicz, Qiang Du  and Xiao Xu for helpful discussions on the subject. 

\appendix
%% now insert the new code block
\gdef\thesection{\Alph{section}}
\makeatletter
\renewcommand\@seccntformat[1]{Appendix \csname the#1\endcsname.\hspace{0.5em}}
\makeatother

\section{} 

\subsection{Proof of \cref{lem:FPDsymbol}} \label{pf:lemFPDsymbol}
We need to calculate the Fourier symbol of the nonlocal operators first. 
\begin{lemma} \label{lem:Fsymbol}
The Fourier symbol of the operators $\mathcal{L}^B_{\delta}, \mathcal{G}_{\delta}, \mathcal{D}_{\delta} $ are given by 
\begin{equation} \label{eqn:FBondbasedOperator}
-\widehat{\mathcal{L}^B_{\delta} \bm{u}}(\bm{\xi}) =  \bm{M}^B_{\delta}(\bm{\xi}) \bm{\widehat{u}} (\bm{\xi}), 
\end{equation}
\begin{equation} \label{eqn:FGradientOperator}
\widehat{\mathcal{G}_{\delta} \theta}(\bm{\xi}) = i \bm{b}_{\delta}(\bm{\xi}) \widehat{\theta}(\bm{\xi}),
\end{equation}
\begin{equation} \label{eqn:FDivergenceOperator}
\widehat{\mathcal{D}_{\delta} \bm{u}}(\bm{\xi}) = i \bm{b}^T_{\delta}(\bm{\xi}) \widehat{\bm{u}}(\bm{\xi}),
\end{equation}
where $\bm{\lambda}_{\delta}(\bm{\xi})$ is a $\nd \times \nd$ matrix and $\bm{b}_{\delta}(\bm{\xi})$ is a vector. They are expressed as 
\begin{equation} \label{eqn:bondsymbol}
\begin{aligned}
\bm{M}^B_{\delta}(\bm{\xi}) &= \int_{B_{\delta}} \rho_{\delta}(|\bm{s}|) \frac{\bm{s} \otimes \bm{s}}{|\bm{s}|^2} (1-\textnormal{cos}(\bm{s}\cdot \bm{\xi})) d\bm{s}, \\
&=p_{\delta}(|\bm{\xi}|) \left( \bm{I}_\nd - \vec{\bm{\xi}} \vec{\bm{\xi}}^{\,\,T} \right)  + q_{\delta}(|\bm{\xi}|) \vec{\bm{\xi}} \vec{\bm{\xi}}^{\,\,T},
\end{aligned}
\end{equation}
and 
\begin{equation} \label{eqn:gradientsymbol}
\bm{b}_{\delta}(\bm{\xi}) = \int_{B_{\delta}} \rho_{\delta}(|\bm{s}|) \bm{s} \, \textnormal{sin}(\bm{s}\cdot \bm{\xi}) d\bm{s} =b_{\delta}(|\bm{\xi}|)\vec{\bm{\xi}} \, ,
\end{equation}
where \( \displaystyle \vec{\bm{\xi}} =\frac{\bm{\xi}}{|\bm{\xi}|} \) is the unit vector in the direction of $\bm{\xi}$ and the scalars $p_{\delta}(|\bm{\xi}|), q_{\delta}(|\bm{\xi}|)$ and $b_{\delta}(|\bm{\xi}|)$ are given by
\begin{equation} \label{eqn:scalarP}
p_{\delta}(|\bm{\xi}|) = \int_{B_{\delta}} \rho_{\delta}(|\bm{s}|) \frac{s^2_1}{|\bm{s}|^2}(1-\textnormal{cos}(|\bm{\xi}|s_{\nd})) d\bm{s} ,
\end{equation}
\begin{equation} \label{eqn:scalarQ}
    q_{\delta}(|\bm{\xi}|) = \int_{B_{\delta}} \rho_{\delta}(|\bm{s}|) \frac{s^2_{\nd}}{|\bm{s}|^2}(1-\textnormal{cos}(|\bm{\xi}|s_\nd)) d\bm{s} ,
\end{equation}
\begin{equation} \label{eqn:scalarB}
b_{\delta}(\bm{|\xi|}) = \int_{B_{\delta}} \rho_{\delta}(|\bm{s}|) s_{\nd} \, \textnormal{sin}(|\bm{\xi}|s_{\nd}) d\bm{s} .
\end{equation}
\end{lemma}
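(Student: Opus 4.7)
The plan is to compute each symbol directly from the definition of the Fourier transform, exploiting two ingredients: (i) the nonlocal integral is a convolution-type operator in the variable $\bm{s}=\bm{y}-\bm{x}$, so taking the Fourier transform pulls out a multiplier, and (ii) the kernel $\rho_\delta(|\bm{s}|)$ is radial, which both kills the imaginary/real parts of the appropriate integrands by $\bm{s}\mapsto-\bm{s}$ symmetry and enables a rotation argument to display the tensor structure.

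First I would treat $\mathcal{L}^B_\delta$. Substituting $\bm{s}=\bm{y}-\bm{x}$ in \cref{eqn:BondOperator} rewrites the operator as a convolution of $\bm{u}$ against the matrix-valued kernel $\rho_\delta(|\bm{s}|)\,\bm{s}\otimes\bm{s}/|\bm{s}|^2$; applying the Fourier transform yields $\widehat{\mathcal{L}^B_\delta\bm{u}}(\bm{\xi}) = \bm{K}_\delta(\bm{\xi})\widehat{\bm{u}}(\bm{\xi})$ with $\bm{K}_\delta(\bm{\xi}) = \int_{B_\delta}\rho_\delta(|\bm{s}|)\,\bm{s}\otimes\bm{s}/|\bm{s}|^2\,(e^{i\bm{s}\cdot\bm{\xi}}-1)\,d\bm{s}$. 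The substitution $\bm{s}\mapsto-\bm{s}$ and radial symmetry of $\rho_\delta$ eliminate the imaginary $\sin$-part, leaving $-\bm{K}_\delta(\bm{\xi})=\bm{M}^B_\delta(\bm{\xi})$ as in \cref{eqn:bondsymbol}. To get the explicit $p_\delta,q_\delta$ decomposition I would choose an orthogonal matrix $\bm{R}$ with $\bm{R}\bm{e}_\nd=\vec{\bm{\xi}}$ and change variables $\bm{s}=\bm{R}\tilde{\bm{s}}$; the integrand becomes $\bm{R}(\tilde{\bm{s}}\otimes\tilde{\bm{s}}/|\tilde{\bm{s}}|^2)\bm{R}^T(1-\cos(|\bm{\xi}|\tilde s_\nd))$. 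By the $(\nd-1)$-fold symmetry of the integrand in $\tilde s_1,\ldots,\tilde s_{\nd-1}$ the inner matrix is diagonal with entries equal to $p_\delta(|\bm{\xi}|)$ in the first $\nd-1$ slots and $q_\delta(|\bm{\xi}|)$ in the last, which gives $\bm{M}^B_\delta(\bm{\xi})=p_\delta(|\bm{\xi}|)(\bm{I}_\nd-\vec{\bm{\xi}}\vec{\bm{\xi}}^T)+q_\delta(|\bm{\xi}|)\vec{\bm{\xi}}\vec{\bm{\xi}}^T$ after undoing the rotation via $\bm{R}\bm{e}_\nd\bm{e}_\nd^T\bm{R}^T=\vec{\bm{\xi}}\vec{\bm{\xi}}^T$.

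For $\mathcal{D}_\delta$ and $\mathcal{G}_\delta$ the same translation/substitution trick reduces \cref{eqn:DivergenceOperator} and \cref{eqn:GradientOperator} to convolutions against the vector kernel $\rho_\delta(|\bm{s}|)\bm{s}$; taking Fourier transforms gives multipliers $\int\rho_\delta(|\bm{s}|)\bm{s}^T(e^{i\bm{s}\cdot\bm{\xi}}-1)\,d\bm{s}$ and $\int\rho_\delta(|\bm{s}|)\bm{s}(e^{i\bm{s}\cdot\bm{\xi}}-1)\,d\bm{s}$ respectively. This time the $\bm{s}\mapsto-\bm{s}$ symmetry kills the real ($1,\cos$) parts and leaves only $i\sin(\bm{s}\cdot\bm{\xi})$, producing the $i\bm{b}_\delta$ and $i\bm{b}_\delta^T$ factors of \cref{eqn:FGradientOperator,eqn:FDivergenceOperator}. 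The same rotation argument as above, applied to the vector integrand $\bm{s}\sin(\bm{s}\cdot\bm{\xi})$, forces the result to lie in the $\vec{\bm{\xi}}$-direction (the components transverse to $\vec{\bm{\xi}}$ vanish by odd symmetry), so $\bm{b}_\delta(\bm{\xi})=b_\delta(|\bm{\xi}|)\vec{\bm{\xi}}$ with $b_\delta$ as in \cref{eqn:scalarB}. Finally I would verify the scaling formulas for $p_\delta,q_\delta,b_\delta$ in terms of $p_1,q_1,b_1$ by using \cref{eqn:NonlocalKernelScaling} and the substitution $\bm{s}=\delta\tilde{\bm{s}}$, which is where the $1/\delta^2$ prefactor in $\bm{M}^B_\delta$ comes from after assembling into $\mathcal{L}^S_\delta$ with the constants $C_\alpha,C_\beta,C_\mu,C_{\lambda,\mu}$.

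The only step requiring care is the rotation argument producing the two distinct coefficients $p_\delta,q_\delta$: one must verify both that all off-diagonal entries of the rotated integrand vanish (consequence of symmetry in pairs $(\tilde s_i,\tilde s_j)\mapsto(-\tilde s_i,\tilde s_j)$ while preserving the factor $1-\cos(|\bm{\xi}|\tilde s_\nd)$), and that all $\nd-1$ transverse diagonal entries coincide (by symmetry among $\tilde s_1,\ldots,\tilde s_{\nd-1}$). Everything else amounts to bookkeeping with the radial scaling of $\rho_\delta$ and the definitions.
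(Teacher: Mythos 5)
Your proposal is correct and follows essentially the same route as the paper's proof: direct computation of the multiplier from the convolution structure, parity of the radial kernel to isolate the $1-\cos$ (resp.\ $i\sin$) parts, and a rotation aligning $\bm{\xi}$ with $\bm{e}_{\nd}$ combined with transverse-coordinate symmetry to obtain the $p_\delta(\bm{I}_{\nd}-\vec{\bm{\xi}}\vec{\bm{\xi}}^{\,T})+q_\delta\vec{\bm{\xi}}\vec{\bm{\xi}}^{\,T}$ decomposition. The only cosmetic difference is that you perform the rotation as a change of variables in the integral, whereas the paper uses the identity $\bm{M}^B_{\delta}(\bm{\xi})=\bm{\mathcal{R}}^T\bm{M}^B_{\delta}(\bm{\mathcal{R}}\bm{\xi})\bm{\mathcal{R}}$ and writes out the rotation matrix explicitly; the two are equivalent.
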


\begin{proof}
The derivations of \cref{eqn:FBondbasedOperator,eqn:FGradientOperator,eqn:FDivergenceOperator} follow directly from the definition of these nonlocal operators. The derivation of $\bm{b}_{\delta}(\bm{\xi})$ can be found in \cite{Tian2018SPH}, and we follow the same strategy to show $\bm{M}^B_{\delta}(\bm{\xi})$,
\begin{equation} \notag
\begin{aligned}
-\widehat{\mathcal{L}^B_{\delta} \bm{u}}(\bm{\xi}) &=-\int_{\mathbb{R}^{3}} e^{-i\bm{x} \cdot \bm{\xi}} \int_{B_{\delta}}\rho_{\delta}(|\bm{s}|)\frac{\bm{s} \otimes \bm{s}}{|\bm{s}|^2}(\bm{u}(\bm{x}+\bm{s})-\bm{u}(\bm{x}))d\bm{s} d\bm{x}, \\
  &=-\int_{B_{\delta}} \int_{\mathbb{R}^{3}} \rho_{\delta}(|\bm{s}|)\frac{\bm{s} \otimes \bm{s}}{|\bm{s}|^2}(\bm{u}(\bm{x}+\bm{s})-\bm{u}(\bm{x})) e^{-i\bm{x} \cdot \bm{\xi}}d\bm{x} d\bm{s} , \\ 
  &=\int_{B_{\delta}} \rho_{\delta}(|\bm{s}|) \frac{\bm{s} \otimes \bm{s}}{|\bm{s}|^2}(1-e^{i\bm{s}\cdot \bm{\xi}})\bm{\widehat{u}} (\bm{\xi}) d\bm{s}, \\ 
  &= \bm{M}^B_{\delta}(\bm{\xi}) \bm{\widehat{u}} (\bm{\xi}),
\end{aligned}
\end{equation}
where $\bm{M}^B_{\delta}(\bm{\xi})$ is given by the first line of \cref{eqn:bondsymbol} and we have used the symmetry of the nonlocal kernel $\rho_{\delta}(|\bm{s}|)$.

We proceed to show the second line of \cref{eqn:bondsymbol} only for $\nd=3$ because the case $\nd=2$ is similar. For any orthogonal matrix $\bm{\mathcal{R}}$, we have 
\begin{equation} \notag 
\bm{M}^B_{\delta}(\bm{\xi}) = \bm{\mathcal{R}}^T\bm{M}^B_{\delta}(\bm{\mathcal{R}}\bm{\xi})\bm{\mathcal{R}}.
\end{equation}
We let $\bm{\mathcal{R}}$ be the orthogonal matrix which rotates $\bm{\xi}$ to be aligned with $\bm{e}$, ($\bm{e}=(0,0,1)^T$ ), as
\[
\bm{\mathcal{R}}\bm{\xi} = |\bm{\xi}|\bm{e}. 
\]
Then $\bm{\mathcal{R}}\bm{ \xi} \cdot \bm{s} = |\bm{\xi}|s_3 $ and  we have
\begin{equation} \notag
\bm{M}^B_{\delta}(\bm{\xi})
= \int_{B_{\delta}} \rho_{\delta}(|\bm{s}|) \frac{1-\textnormal{cos}(|\bm{\xi}|s_3)}{|\bm{s}|^2} \bm{\mathcal{R}}^T\bm{s} (\bm{\mathcal{R}}^T\bm{s})^T d\bm{s}, 
\end{equation}
$\bm{\mathcal{R}}$ is the rotation matrix that rotates $\bm{\xi}$ by an angle of 

\[
\textnormal{arccos} \left(\bm{e} \cdot \frac{\bm{\xi}}{|\bm{\xi}|} \right) = \textnormal{arccos} \left(\frac{\xi_3}{|\bm{\xi}|} \right),
\]
around the axis in the direction of 
\[
\frac{\bm{\xi} \times \bm{e}}{|\bm{\xi} \times \bm{e}|} =\frac{1}{\sqrt{\xi_1^2+\xi_2^2}}(\xi_2, -\xi_1, 0).
\]
$\bm{\mathcal{R}}$ can be explicitly constructed as 
\begin{equation} \label{eqn:RotationMatrix}
\bm{\mathcal{R}} = 
\begin{bmatrix}
\vspace{.2cm}
\displaystyle \frac{\xi_3}{|\bm{\xi}|}+\frac{\xi^2_2}{\xi_1^2+\xi^2_2} \left(1-\frac{\xi_3}{|\bm{\xi}|} \right) & \displaystyle \frac{-\xi_1\xi_2}{\xi_1^2+\xi^2_2}\left(1-\frac{\xi_3}{|\bm{\xi}|}\right) & \displaystyle -\frac{\xi_1}{|\bm{\xi}|}  \\ 
\vspace{.2cm}
\displaystyle \frac{-\xi_1\xi_2}{\xi_1^2+\xi^2_2}\left(1-\frac{\xi_3}{|\bm{\xi}|}\right) & \displaystyle \frac{\xi_3}{|\bm{\xi}|}+\frac{\xi^2_1}{\xi_1^2+\xi^2_2}\left(1-\frac{\xi_3}{|\bm{\xi}|}\right) & \displaystyle -\frac{\xi_2}{|\bm{\xi}|} \\
\displaystyle \frac{\xi_1}{|\bm{\xi}|} & \displaystyle \frac{\xi_2}{|\bm{\xi}|} & \displaystyle \frac{\xi_3}{|\bm{\xi}|} \\
\end{bmatrix}.
\end{equation}
Hence each component of $\bm{M}_{\delta}(\bm{\xi})$ is written as
\begin{equation} \notag
\begin{aligned}
 [\bm{M}^B_{\delta}(\bm{\xi})]_{ik} &=\int_{B_{\delta}} \rho_{\delta}(|\bm{s}|) \frac{1-\textnormal{cos}(|\bm{\xi}|s_3)}{|\bm{s}|^2} \sum_{j=1}^3\mathcal{R}_{ji}s_j \sum_{l=1}^3\mathcal{R}_{lk}s_l  \, d\bm{s}, \\
 &=\int_{B_{\delta}}  \rho_{\delta}(|\bm{s}|) \frac{1-\textnormal{cos}(|\bm{\xi}|s_3)}{|\bm{s}|^2} \sum_{j=1}^3\mathcal{R}_{ji}\mathcal{R}_{jk}s^2_j  \, d\bm{s}, \quad \textnormal{for } i, k = 1, 2, 3, \\
\end{aligned}
\end{equation}
where $\mathcal{R}_{ij}$ is the component of $\bm{\mathcal{R}}$. We can rewrite the Fourier symbol $\bm{M}^B_{\delta}(\bm{\xi})$ as
\begin{equation} \label{eqn:Interlambda}
\bm{M}^B_{\delta}(\bm{\xi}) =\int_{B_{\delta}(\bm{0})} \rho_{\delta}(|\bm{s}|) \frac{1-\textnormal{cos}(|\bm{\xi}|s_3)}{|\bm{s}|^2} \bm{M}(\bm{\xi}, \bm{s}) \, d\bm{s}, 
\end{equation}
where each component of $\bm{M}(\bm{\xi}, \bm{s})$ is given by
\[
M_{ik} =\sum_{j=1}^3R_{ji}R_{jk}s^2_j \, .
\]
From \cref{eqn:RotationMatrix}, we arrive at
\begin{equation} \label{eqn:InterM} 
\begin{aligned}
\bm{M}(\bm{\xi},\bm{s}) &= \frac{1}{|\bm{\xi}|^2}
\begin{bmatrix}
\vspace{.2cm}
(\xi^2_2+\xi^2_3)s^2_1 +\xi^2_1s^2_3 &  \xi_1 \xi_2 (s^2_3 -s^2_1) & \xi_1 \xi_3 (s^2_3 -s^2_1) \\
\vspace{.2cm}
\xi_2 \xi_1 (s^2_3 -s^2_1) &  (\xi^2_1+\xi^2_3)s^2_1 +\xi^2_2s^2_3 & \xi_2 \xi_3 (s^2_3 -s^2_1) \\
\xi_3 \xi_1 (s^2_3 -s^2_1) &  \xi_3 \xi_2 (s^2_3 -s^2_1) & (\xi^2_1+\xi^2_2)s^2_1 +\xi^2_3s^2_3 \, \\
\end{bmatrix}, \\
&= s^2_1\left( \bm{I}_3 - \vec{\bm{\xi}} \vec{\bm{\xi}}^{\,\,T} \right)+ s^2_3 \vec{\bm{\xi}} \vec{\bm{\xi}}^{\,\,T}, 
\end{aligned}
\end{equation}
where we have used the symmetry of the ball and the equivalence of $s_1$ and $s_2$ in the integrand. Substitute \cref{eqn:InterM} into \cref{eqn:Interlambda}, we obtain
the second line of \cref{eqn:bondsymbol}, and $p_{\delta}(|\bm{\xi}|)$ and $q_{\delta}(|\bm{\xi}|)$ as given in \cref{eqn:scalarP,eqn:scalarQ}. 
\end{proof}

With the establishment of the previous lemma, we now can prove \cref{lem:FPDsymbol}.

\begin{proof}[Proof of \cref{lem:FPDsymbol}]
Due to the scaling of the nonlocal kernel \cref{eqn:NonlocalKernelScaling}, we can rewrite $p_{\delta}(|\bm{\xi}|)$, $q_{\delta}(|\bm{\xi}|)$ and $b_{\delta}(|\bm{\xi}|)$ as the following
\begin{equation} \label{eqn:scalingP}  
p_{\delta}(|\bm{\xi}|) = \frac{ p_1(\delta |\bm{\xi}| ) }{\delta^2 } ,
\end{equation}
\begin{equation} \label{eqn:scalingQ} 
q_{\delta}(|\bm{\xi}|) = \frac{ q_1(\delta |\bm{\xi}| ) }{ \delta^2 } , 
\end{equation}
\begin{equation} \label{eqn:scalingB} 
b_{\delta}(\bm{|\xi|}) =\frac{ b_1(\delta |\bm{\xi}|) }{\delta } ,
\end{equation}
where $p_1(\delta |\bm{\xi}| )$, $q_1(\delta |\bm{\xi}| )$ and $b_1(\delta |\bm{\xi}| )$ are given as in \cref{eqn:scalarP1,eqn:scalarQ1,eqn:scalarB1} respectively. Combing 
\cref{eqn:FBondbasedOperator,eqn:FGradientOperator,eqn:FDivergenceOperator}, we arrive at \cref{eqn:FPDNoperator}. Substituting \cref{eqn:scalingP,eqn:scalingQ} into \cref{eqn:bondsymbol}, \cref{eqn:scalingB} into \cref{eqn:gradientsymbol}, we obtain \cref{eqn:PDNsymbol}. 
\end{proof}

\subsection{Proof of \cref{lem:GCF}} \label{pf:lemGCF}
The inverse Fourier transform of $\widehat{\mathcal{L}^S_{\delta} \bm{u}}(\bm{\xi})$ gives
\begin{equation} \notag 
\begin{aligned}
-\mathcal{L}^S_{\delta} \bm{u}(\bm{x})  &= (2\pi)^{-\nd}\int_{\mathbb{R}^{\nd}} e^{i\bm{x}\cdot \bm{\xi}} \bm{M}^S_{\delta}(\bm{\xi}) \widehat{\bm{u}}(\bm{\xi}) d\bm{\xi},
\end{aligned}
\end{equation}
From Parseval's identity, we have
\begin{align*}
((\Psi_{\bm{k}}), -\mathcal{L}^S_{\delta}(\Psi_{\bm{k'}})) &= (2\pi)^{-\nd} \mathlarger{\int}_{\mathbb{R}^{\nd}} \left(\widehat{\Psi_{\bm{k}}}(\bm{\xi}) \right) \overline{ \bm{M}^S_{\delta}(\bm{\xi})} \overline{\left(\widehat{\Psi_{\bm{k'}}}(\bm{\xi})\right)} d\bm{\xi}, \\
&= (2\pi)^{-\nd}  \sum_{j,\,j'=1}^\nd  \mathlarger{\int}_{\mathbb{R}^{\nd}}  e^{i\left(\left(\bm{x_{k'}}\right)-\left(\bm{x_{k}}\right) \right) \cdot \bm{\xi}} \left[\bm{M}^S_{\delta}(\bm{\xi})\right]_{jj'} \widehat{\Psi_{\bm{0}}}^2 (\bm{\xi}) d\bm{\xi},\\
&= \left(2\pi \right)^{-\nd} \sum_{j,\,j'=1}^\nd \mathlarger{\int}_{\bm{Q}}   e^{i(\bm{k'}-\bm{k})\cdot \bm{\xi}}\left[\bm{M}_G(\delta, \bm{h}, \bm{\xi})\right]_{jj'} d\bm{\xi}, 
\end{align*}
where we have used \cref{eqn:RKShape} and 
the Fourier transform of the RK shape function 
\begin{equation} \notag
\widehat{\Psi_{\bm{0}}}(\bm{\xi})= \prod_{j=1}^{\nd} \widehat{\phi \left(\frac{x_j}{2h_j}\right)}(\xi_j) = \prod_{j=1}^{\nd}h_j \left(\frac{\sin(h_j\xi_j/2)}{h_j\xi_j/2}\right)^4,
\end{equation}
where the Fourier transform of the cubic B-spline function is given as
\begin{equation} \notag
\widehat{\phi}(\xi) = \frac{1}{2}\left(\frac{\textnormal{sin}(\xi/4)}{\xi/4}\right)^4.
\end{equation}
Hence, the Galerkin form \cref{eqn:IGalerkin} can be written as
\begin{align*}
&(i^h(\bm{u_{k}}), -\mathcal{L}^S_{\delta}i^h(\bm{v_{k}})) \\
&= (2\pi)^{-\nd} \sum_{j, j'=1}^\nd \sum_{\bm{k}, \bm{k'}\in \mathbb{Z}^\nd} u_{j,\bm{k}} \, \overline{v_{j', \bm{k'}}}  \mathlarger{\int}_{\bm{Q}} e^{i(\bm{k'}-\bm{k}) \cdot \bm{\xi}} \left[\bm{M}_{G}(\delta, h, \bm{\xi})\right]_{jj'} d\bm{\xi},\\
&= \left(2\pi\right)^{-\nd} \sum_{j,j'=1}^\nd \mathlarger{\int}_{\bm{Q}} \widetilde{u_j}(\bm{\xi}) \overline{\widetilde{v_{j'}}(\bm{\xi})} \left[ \bm{M}_G(\delta, \bm{h}, \bm{\xi})\right]_{jj'} d\bm{\xi},
\end{align*}
and we have proved \ref{FGalerkin}. 

%%%%%%%%%%%%%%%%%%% comment out unused equation %%%%%%%%%%
\begin{comment}
\begin{align*}
&(i^h(\bm{u_k}), -\mathcal{L}^S_{\delta}i^h(\bm{v_k})) \\
&= (2\pi)^{-3} \sum_{m,n=1}^3 \mathlarger{\int}_{\mathbb{R}^{3}} \sum_{\bm{m}, \bm{n}\in \mathbb{Z}^3}u_{m,\bm{m}} \overline{u_{n,\bm{n}}} \widehat{\Psi_{\bm{m}}}(\bm{\xi})  \overline{\left[ \bm{\lambda}^S_{\delta}(\bm{\xi})\right]_{mn}} \overline{\widehat{\Psi_{\bm{n}}}(\bm{\xi})} d\bm{\xi}, \\
&= (2\pi)^{-3} \sum_{m,n=1}^3 \sum_{\bm{m}, \bm{n}\in \mathbb{Z}^3} u_{m,\bm{m}} \overline{u_{n,\bm{n}}}  \int_{\mathbb{R}^{3}} e^{i(\bm{x_{n}}-\bm{x_{m}}) \cdot \bm{\xi}} \left[\bm{\lambda}^S_{\delta}(\bm{\xi})\right]_{mn} \widehat{\Psi_{\bm{0}}}^2 (\bm{\xi}) d\bm{\xi},\\
&= \left( \frac{2\pi}{h} \right)^{-3} \sum_{m,n=1}^3 \int_{\bm{Q}} \sum_{\bm{m}, \bm{n}\in \mathbb{Z}^3} u_{m,\bm{m}} \overline{u_{n,\bm{n}}}  e^{i(\bm{n}-\bm{m})\cdot \bm{\xi}} \left[ \bm{\lambda}_G(\delta, h, \bm{\xi})\right]_{mn} d\bm{\xi}, \\
&= \left( \frac{2\pi}{h} \right)^{-3} \sum_{m,n=1}^3 \int_{\bm{Q}} \widetilde{u_m}(\bm{\xi}) \overline{\widetilde{v_n}(\bm{\xi})} \left[ \bm{\lambda}_G(\delta, h, \bm{\xi})\right]_{mn} d\bm{\xi},
\end{align*}
\end{comment}
%%%%%%%%%%%%%%%%%%%% end of comment %%%%%%%%%%%%%%%%%%%%%%

Next, we use the same strategy to express the collocation matrix as
\begin{equation} \notag 
\begin{aligned}
-\left[\mathcal{L}^S_{\delta} \left(\Psi_{\bm{k'}} \right) \right]_{j}(\bm{x_k}) &= (2\pi)^{-\nd} \mathlarger{\int}_{\mathbb{R}^{\nd}} e^{i\bm{x_k}\cdot \bm{\xi}} \bm{M}^S_{\delta}(\bm{\xi})  \left(\widehat{\Psi_{\bm{k'}}}(\bm{\xi})\right)  d\bm{\xi}, \\
& = (2\pi)^{-\nd}  \mathlarger{\int}_{\mathbb{R}^{\nd}} e^{i(\bm{x_k}-\bm{x_{k'}})\cdot \bm{\xi}} \bm{M}^S_{\delta}(\bm{\xi}) \left(\widehat{\Psi_{\bm{0}}}(\bm{\xi})\right)d\bm{\xi}, \\
& = \left(2\pi \right)^{-\nd} \sum_{j'=1}^\nd \mathlarger{\int}_{\bm{Q}} e^{i(\bm{k}-\bm{k'})\cdot \bm{\xi}}  \left[\bm{M}_C  (\delta, \bm{h}, \bm{\xi})\right]_{jj'} d\bm{\xi},
\end{aligned}
\end{equation}
then we arrive at the collocation form \cref{eqn:ICollocation} as
\begin{equation} \notag
\begin{aligned}
&\left((\bm{u_{k}}), -r^h\mathcal{L}^S_{\delta}i^h(\bm{v_{\bm{k'}}})(\bm{x_k})\right)_{l^2}\\
&= \left(2\pi \right)^{-\nd} \sum_{j,j'=1}^\nd   \sum_{\bm{k}, \bm{k'} \in \mathbb{Z}^{\nd}}  u_{j,\bm{k}} \, \overline{v_{j',\bm{k'}}} \mathlarger{\int}_{\bm{Q}}  e^{i(\bm{k'}-\bm{k})\cdot \bm{\xi}} \, \overline{\left[\bm{M}_C(\delta, \bm{h}, \bm{\xi}) \right]_{jj'}} \, d\bm{\xi}   , \\
&= \left( 2\pi \right)^{-\nd} \sum_{j,j'=1}^\nd \mathlarger{\int}_{\bm{Q}} \widetilde{u_j}(\bm{\xi}) \overline{\widetilde{v_{j'}}(\bm{\xi})} \left[ \bm{M}_C(\delta, \bm{h}, \bm{\xi}) \right]_{jj'} d\bm{\xi}.    
\end{aligned}
\end{equation}
This finishes the proof of \ref{FCollocation}. In addition, there exists $C>0$, such that,
\begin{equation} \notag 
p_1 \left(\delta|(\bm{\xi} + 2 \pi \bm{r})\oslash\bm{h}| \right)\prod_{j=1}^\nd \left(\frac{\sin(\xi_j)}{(\xi_j+ 2 \pi r_j)}\right)^4 \left(1- C \left(\frac{\sin(\xi_j)}{\xi_j+  2\pi r_j}\right)^4 \right) >0,
\end{equation}
for $\bm{\xi} \in \bm{Q} \backslash \{\bm{0}\} $,  and $\bm{r} \in \mathbb{Z}^{\nd}$; we can also obtain similar estimates for $q_1(\delta|(\bm{\xi} + 2 \pi \bm{r})\oslash\bm{h}|)$ and $b_1(\delta|(\bm{\xi} + 2 \pi \bm{r})\oslash\bm{h}|)$. Following the procedure as in \cref{lem:PositiveDefinite}, we can see \ref{FGCEquivalent} immediately.

%% If you have bibdatabase file and want bibtex to generate the
%% bibitems, please use
%%
%%  \bibliographystyle{elsarticle-harv} 
%%  \bibliography{<your bibdatabase>}

%% else use the following coding to input the bibitems directly in the
%% TeX file.

\bibliographystyle{elsarticle-harv}
\bibliography{mybib2}

\end{document}